\newcommand{\corr}[1]{{\textcolor{red}{#1}}}
\newcommand{\blue}[1]{{\textcolor{blue}{#1}}}
\newcommand{\C}{\mathbb{C}}
\newcommand{\bbD}{\mathbb{D}}
\newcommand{\Q}{\mathbb{Q}}
\newcommand{\R}{\mathbb{R}}
\newcommand{\Z}{\mathbb{Z}}
\newcommand{\N}{\mathbb{N}}
\renewcommand{\P}{\mathbb{P}}
\newcommand{\tu}{\tilde{u}}
\newcommand{\tcL}{\tilde{\cL}}
\newcommand{\tcX}{\widetilde{\cX}}
\newcommand{\cB}{\mathcal{B}}
\newcommand{\cH}{\mathcal{H}}
\newcommand{\cL}{\mathcal{L}}
\newcommand{\cO}{\mathcal{O}}
\newcommand{\cX}{\mathcal{X}}
\newcommand{\Xdiv}{X^{\mathrm{div}}}
\renewcommand{\a}{\alpha}
\renewcommand{\b}{\beta}
\renewcommand{\d}{\delta}
\newcommand{\e}{\varepsilon}
\newcommand{\g}{\gamma}
\newcommand{\la}{\lambda}
\newcommand{\om}{\omega}
\newcommand{\p}{\psi}
\newcommand{\La}{\Lambda}
\newcommand{\eg}{{\rm e.g.\ }} 
\newcommand{\ie}{{\rm i.e.\ }} 
\newcommand{\triv}{\mathrm{triv}} 
\newcommand{\NA}{\mathrm{NA}}
\newcommand{\An}{\mathrm{An}} 
\newcommand{\amp}{\mathrm{amp}} 
\newcommand{\lo}{\mathrm{log}} 
\newcommand{\re}{\mathrm{ref}} 
\newcommand{\red}{\mathrm{red}}
\newcommand{\half}{\tfrac12}
\newcommand{\an}{\mathrm{an}}
\DeclareMathOperator{\Spec}{Spec}
\DeclareMathOperator{\Exc}{Exc}
\DeclareMathOperator{\MA}{MA}
\DeclareMathOperator{\DF}{DF}
\DeclareMathOperator{\Pic}{Pic}
\DeclareMathOperator{\ord}{ord}
\DeclareMathOperator{\Aut}{Aut}
\DeclareMathOperator{\reg}{reg}
\DeclareMathOperator{\sing}{sing}
\DeclareMathOperator{\spec}{Spec}
\DeclareMathOperator{\Ric}{Ric}
\DeclareMathOperator{\CH}{CH}
\DeclareMathOperator{\Tr}{Tr}
\DeclareMathOperator{\GL}{GL}
\DeclareMathOperator{\SL}{SL}
\DeclareMathOperator{\U}{U}
\newcommand{\D}{\Delta}
\newcommand{\cro}[1]{[\![#1]\!]}
\newcommand{\lau}[1]{(\!(#1)\!)}
\numberwithin{equation}{section}       % Number formulas within sections
\newtheorem{prop} {Proposition} [section]
\newtheorem{thm}[prop] {Theorem} 
\newtheorem{defi}[prop] {Definition}
\newtheorem{lem}[prop] {Lemma}
\newtheorem{cor}[prop]{Corollary}
\newtheorem{prop-def}[prop]{Proposition-Definition}
\newtheorem*{thmA}{Theorem A}
\newtheorem*{thmC}{Theorem C}
\newtheorem*{corB}{Corollary B} 
\newtheorem*{corD}{Corollary D} 
\newtheorem*{corE}{Corollary E} 
\newtheorem{exam}[prop]{Example}
\newtheorem{rmk}[prop]{Remark}
\theoremstyle{remark}
\newtheorem*{ackn}{Acknowledgment} 
\title[Uniform K-stability and asymptotics of energy
functionals]{Uniform K-stability and asymptotics of energy functionals
  in K\"ahler geometry \corr{with errata}}
\date{\today}
\author{S{\'e}bastien Boucksom
  \and
  Tomoyuki Hisamoto
  \and 
  Mattias Jonsson}
\address{CNRS-CMLS\\
  \'Ecole Polytechnique\\
  F-91128 Palaiseau Cedex\\
  France}
\email{sebastien.boucksom@polytechnique.edu}
\address{Graduate School of Mathematics\\
  Nagoya University\\
  Furocho\\
  Chikusa\\
  Nagoya\\ 
  Japan}
\email{hisamoto@math.nagoya-u.ac.jp}
\address{Dept of Mathematics\\
  University of Michigan\\
  Ann Arbor, MI 48109--1043\\
  USA}
\address{Mathematical Sciences\\
  Chalmers University of Technology
  and University of Gothenburg\\
  SE-412 96 G\"oteborg\\
  Sweden}
\email{mattiasj@umich.edu}
\begin{document}

\begin{abstract}
  Consider a polarized complex manifold $(X,L)$ and 
  a ray of positive metrics on $L$ defined by a positive
  metric on a test configuration for $(X,L)$. For many common
  functionals in K\"ahler geometry, we prove that the slope at
  infinity along the ray is given by evaluating the non-Archimedean
  version of the functional (as defined in our earlier
  paper~\cite{BHJ1}) at the non-Archimedean metric on $L$ defined
  by the test configuration.
  Using this asymptotic result, we show that 
  coercivity of the Mabuchi functional implies
  uniform K-stability, as defined in~\cite{Der1,BHJ1}. 
  
\end{abstract}

\maketitle

\setcounter{tocdepth}{1}
\tableofcontents
%
%
%%%%%%%%%%%%%%%%%%%%%%%%%%%%%%%%%%%%%%%%%%%%%%%%%%%%%%%%%%%%%%%%%%%
%
% 
%\newpage
% \section*{TODO}
%
%
%%%%%%%%%%%%%%%%%%%%%%%%%%%%%%%%%%%%%%%%%%%%%%%%%%%%%%%%%%%%%%%%%%%
%
%
%\newpage

\section*{About the errata}
An error was unfortunately found in the published version of this article, which affects a number of results of the paper. The proof of Theorem~\ref{thm:spec} below indeed contains an incorrect claim --- but the statement of this theorem is however possibly true, and we have thus decided in this updated arXiv version to mark \corr{in red} all parts of the paper affected by the issue, which are thus unproved as things stand, but hopefully repairable if the flawed proof every gets corrected. Note that we haven't tried to otherwise update the paper. 

We would like to extend our warmest thanks to Yan Li, from Peking University, for pointing out this problem to us.

\section*{Introduction}
Let $(X,L)$ be a polarized complex manifold, \ie smooth projective complex variety $X$ endowed with an ample line bundle $L$. A central problem in
K\"ahler geometry is to give necessary and sufficient conditions for 
the existence of canonical K\"ahler metrics in the corresponding K\"ahler class $c_1(L)$, for example, 
constant scalar curvature K\"ahler metrics (cscK for short). To fix ideas, suppose the 
reduced automorphism group $\Aut(X,L)/\C^*$ is discrete.
In this case, the celebrated Yau-Tian-Donaldson conjecture asserts that $c_1(L)$
admits a cscK metric iff $(X,L)$ is K-stable.
That K-stability follows from the existence of a cscK metric
was proved by Stoppa~\cite{Sto09}, building upon work by
Donaldson~\cite{Don3}, but the reverse direction is considered wide
open in general. 

This situation has led people to introduce stronger stability
conditions that would hopefully imply the existence of a cscK metric. 
Building upon ideas of Donaldson~\cite{Don3},
Sz\'ekelyhidi~\cite{Sze1} proposed to use a version of K-stability in which,
for any test configuration $(\cX,\cL)$ for $(X,L)$,
the Donaldson-Futaki invariant $\DF(\cX,\cL)$ is bounded below by 
a positive constant times a suitable \emph{norm} of $(\cX,\cL)$.
(See also~\cite{Sze2} for a related notion.)

Following this lead, we defined in the prequel~\cite{BHJ1} to this paper,
$(X,L)$ to be \emph{uniformly K-stable} if there exists $\delta>0$
such that 
\begin{equation*}
  \DF(\cX, \cL) \geq \delta J^{\NA}(\cX, \cL) 
\end{equation*}
for any normal and ample test configuration $(\cX,\cL)$. 
Here $J^{\NA}(\cX, \cL)$ is a non-Archimedean analogue of Aubin's 
$J$-functional. It is equivalent to the $L^1$-norm of $(\cX,\cL)$ as
well as the minimum norm considered by Dervan~\cite{Der1}.
The norm is zero iff the normalization of $(\cX,\cL)$ is trivial, so uniform K-stability implies K-stability.

In~\cite{BHJ1} we advocated the point of view that a test
configuration defines a \emph{non-Archimedean metric} on $L$, 
that is, a metric on the Berkovich analytification of $(X,L)$ with respect to
the trivial norm on the ground field $\C$. 
Further, we defined non-Archimedean analogues 
of many classical functionals in K\"ahler geometry.
One example is the functional $J^\NA$ above. 
Another is $M^\NA$, a non-Archimedean 
analogue of the Mabuchi K-energy functional $M$.
It agrees with the Donaldson-Futaki invariant, up to an explicit error
term, and uniform K-stability is equivalent to 
\begin{equation*}
  M^{\NA}(\cX, \cL) \geq \delta J^{\NA}(\cX, \cL)
\end{equation*}
for any ample test configuration $(\cX,\cL)$.
In~\cite{BHJ1} we proved that canonically polarized manifolds and
polarized Calabi-Yau manifolds are always uniformly K-stable.

\smallskip
A first goal of this paper is to exhibit precise relations between the
non-Archimedean functionals and their classical counterparts. 
From now on we do not \emph{a priori} assume that the reduced
automorphism group of $(X,L)$ is discrete.
We prove
\begin{thmA} 
  Let $(\cX,\cL)$ be an ample test configuration for a polarized complex
  manifold $(X,L)$. 
  Consider any smooth strictly positive $S^1$-invariant 
  metric $\Phi$ on $\cL$ defined near the central fiber, and let $(\phi^s)_s$ 
  be the corresponding ray of smooth positive metrics on $L$. 
  Denoting by $M$ and $J$ the Mabuchi K-energy functional and Aubin
  $J$-functional, respectively, we then have
  \begin{equation*}
    \lim_{s\to+\infty}\frac{M(\phi^s)}{s}=M^{\NA}(\cX,\cL)
    \qquad\text{and}\qquad
    \lim_{s\to+\infty}\frac{J(\phi^s)}{s}=J^{\NA}(\cX,\cL).
  \end{equation*}
\end{thmA}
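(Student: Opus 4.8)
The plan is to reduce both slope formulas to a single slope identity for each of the elementary energy functionals out of which $J$ and $M$ are assembled, and then to prove these one at a time. On the Archimedean side I would write $J=\Lambda-E$, where $E$ is the Monge-Amp\`ere energy and $\Lambda(\phi)=\frac1V\int_X(\phi-\phi_\re)\,\omega_\re^n$ is the linear functional, and use the Chen-Tian formula
\[
M(\phi)=H(\phi)+\bar S\,E(\phi)-n\,E^{\Ric}(\phi),
\]
where $H(\phi)=\frac1V\int_X\log\left(\omega_\phi^n/\omega_\re^n\right)\omega_\phi^n$ is the entropy, $\bar S$ the mean scalar curvature, and $E^{\Ric}$ the energy twisted by $\Ric(\omega_\re)$. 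The non-Archimedean functionals of~\cite{BHJ1} admit parallel decompositions into $E^\NA$, $\Lambda^\NA$, $(E^{\Ric})^\NA$ and a non-Archimedean entropy. Since each side is linear in its constituents, it suffices to prove
\[
\lim_{s\to+\infty}\frac{F(\phi^s)}{s}=F^\NA(\cX,\cL)
\]
separately for $F\in\{E,\Lambda,E^{\Ric},H\}$.

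First I would dispatch the energy-type blocks $E$, $\Lambda$ and $E^{\Ric}$, which already settles the statement for $J$. Let $(\bar\cX,\bar\cL)$ be the compactification of $(\cX,\cL)$ over $\P^1$ obtained by gluing in the trivial configuration over $\infty$, and extend $\Phi$ to a smooth metric on $\bar\cL$. Parametrizing the ray by $s=-\log|t|$, the first-variation identity $\frac{d}{ds}E(\phi^s)=\frac1V\int_X\dot\phi^s\,\omega_{\phi^s}^n$ combined with Stokes' theorem on the total space identifies the slope of $E$ with a top self-intersection number,
\[
\lim_{s\to+\infty}\frac{E(\phi^s)}{s}=\frac{(\bar\cL^{\,n+1})}{(n+1)V}=E^\NA(\cX,\cL);
\]
strict positivity and $S^1$-invariance of $\Phi$ supply the convexity and the control of boundary terms near the central fiber that make this limit exist. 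The same integration by parts, with one factor of $\omega_{\phi^s}$ replaced by the fixed form $\omega_\re$ or by $\Ric(\omega_\re)$, computes the slopes of $\Lambda$ and $E^{\Ric}$ as the corresponding mixed intersection numbers on $(\bar\cX,\bar\cL)$, matching $\Lambda^\NA$ and $(E^{\Ric})^\NA$.

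The genuinely difficult step, and the main obstacle, is the entropy term $H(\phi^s)$, whose slope must be matched with the non-Archimedean entropy $\int A_X\,\MA^\NA(\cX,\cL)$, the integral of the log-discrepancy function against the non-Archimedean Monge-Amp\`ere measure. Here the integrand is transcendental rather than intersection-theoretic, so I would pass to an SNC model dominating $\cX$ on which the central fiber is a simple normal crossings divisor with explicit combinatorics, and analyze the volume form $\omega_{\phi^s}^n$ in coordinates adapted to its strata. Using the explicit local shape of the positive metric $\Phi$ near each stratum, the aim is to show that as $s\to+\infty$ the probability measures $V^{-1}\omega_{\phi^s}^n$ converge, after retraction to the dual skeleton, to $\MA^\NA(\cX,\cL)$, while the normalized logarithmic density $\frac1s\log\left(\omega_{\phi^s}^n/\omega_\re^n\right)$ tends pointwise to the log-discrepancy. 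The delicate points are ruling out loss or concentration of mass away from the divisorial points and securing enough uniform integrability to pass the logarithm to the limit; this is exactly where positivity of $\Phi$ and a careful choice of model are needed.

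Assembling the entropy slope with the three energy slopes through the two parallel Chen-Tian-type decompositions then yields $\lim_{s\to+\infty}s^{-1}M(\phi^s)=M^\NA(\cX,\cL)$, completing the proof.
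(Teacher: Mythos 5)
Your reduction of both slope formulas to the elementary blocks $E$, $\Lambda$, $E^{\Ric}$ and $H$ via the Chen--Tian formula is exactly the paper's strategy, and your treatment of the energy-type blocks (first variation plus Stokes on the total space) is a workable alternative to the paper's Deligne-pairing argument (Lemma~\ref{lem:funcdel} combined with Lemma~\ref{lem:deligne}); this settles the $J$-statement. The problem is the entropy term, where your proposal stops at precisely the point where the real work begins. You propose to show that $V^{-1}\omega_{\phi^s}^n$ converges to $\MA^{\NA}(\cX,\cL)$ after retraction to the skeleton and that $\tfrac1s\log(\omega_{\phi^s}^n/\omega_\re^n)$ converges pointwise to the log discrepancy, and then to ``pass the logarithm to the limit.'' But relative entropy is only lower semicontinuous under weak convergence of measures, so weak convergence of the Monge--Amp\`ere measures together with pointwise convergence of the normalized log-densities does not yield the upper bound on $\limsup_s s^{-1}H(\phi^s)$; the ``uniform integrability'' you invoke is the entire content of the hard direction, and no mechanism for it is given. (There is also a mismatch of reference measures: against the fixed $\omega_\re^n$ the density degenerates at rate $s$ on each stratum, so the statement has to be formulated against a degenerating family of reference volume forms, which is not addressed.)

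The paper's proof supplies a concrete and much more economical mechanism, and it is worth seeing why it avoids your difficulty. Rather than treating $H$ alone and targeting the valuative formula $\int A_X\,\MA^{\NA}$, it treats $H+R$ as a single Deligne pairing $\langle\half\log\MA(\phi^s),(\phi^s)^n\rangle_X$, whose non-Archimedean slope is the intersection number $(K^\lo_{\bar\cX/\P^1}\cdot\bar\cL^n)$ on an snc model. One then compares $\half\log\MA(\phi^s)$ with the ray $\psi^s_\re$ of metrics on $K_X$ induced by a fixed smooth $S^1$-invariant metric $\Psi_\re$ on $K^\lo_{\cX/\C}$; Lemma~\ref{lem:deligne} handles the $\psi^s_\re$ term, and the error is exactly $\half\int_X\log\bigl[\MA(\phi^s)/e^{2\psi^s_\re}\bigr](dd^c\phi^s)^n$. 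This is shown to be $O(\log s)$, hence negligible after dividing by $s$: the lower bound is the nonnegativity of relative entropy together with the volume growth $\int_{\cX_\tau}e^{2\Psi_\tau}\sim(\log|\tau|^{-1})^d$ (Lemma~\ref{lem:estim}), and the upper bound is a uniform pointwise bound on $(dd^c\Phi|_{\cX_\tau})^n/e^{2\Psi_\tau}$ in logarithmic polar coordinates adapted to the strata of $\cX_0$. No convergence of Monge--Amp\`ere measures to $\MA^{\NA}$ is needed. To complete your argument you would either need to import this comparison-with-$K^\lo_{\cX/\C}$ device, or prove a genuine dominated-convergence statement for the entropy along the ray, which is substantially harder than the theorem itself.
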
 
The corresponding equalities also hold for several other functionals,
see~Theorem~\ref{thm:asymfunc}. 
More generally, we prove that these asymptotic properties hold
in the logarithmic setting, for subklt pairs $(X,B)$ and with weaker positivity assumptions,
see Theorem~\ref{T201}.

At least when the total space $\cX$ is smooth, the assertion in
Theorem~A regarding the Mabuchi functional is closely related to 
several statements appearing in the literature~\cite[Corollary 2]{PRS},
\cite[Corollary 1]{PT2},
\cite[Remark 12, p.38]{Li12},
\cite[Lemma~2.1]{Tian14},
following the seminal work~\cite{Tian97}. A special case appears already 
in~\cite[p.328]{DT92}.
However, to the best of our knowledge, neither the
general and precise statement given here nor its proof is available in the
literature.

As in~\cite{PRS}, the proof of Theorem~A uses Deligne pairings, but the analysis here is
more delicate since the test configuration $\cX$ is not smooth. Using resolution 
of singularities, we can make $\cX$ smooth, but then we lose the strict positivity 
of $\Phi$. It turns out that the situation can be analyzed by estimating
integrals of the form $\int_{\cX_\tau}e^{2\Psi|_{\cX_\tau}}$ as $\tau\to0$, where $\cX\to\C$ is
an snc test configuration for $X$, and $\Psi$ is a smooth metric
on the (logarithmic) relative canonical bundle of $\cX$ 
near the central fiber, see Lemma~\ref{lem:estim}.

\smallskip
Donaldson~\cite{Don99} (see also~\cite{Mab87,Sem92}) 
has advocated the point of view that the space
$\cH$ of positive metrics on $L$ is an infinite-dimensional 
symmetric space. One can view the space $\cH^\NA$ of positive 
non-Archimedean metrics on $L$ as (a subset of) the associated (conical) Tits building.
Theorem~A gives justification to this paradigm.

\smallskip
The asymptotic formulas in Theorem~A allow us to study coercivity properties
of the Mabuchi functional. 
As an immediate consequence of Theorem~A, we have
\begin{corB}
  If the Mabuchi functional is coercive in the sense that 
  $$
  M \geq \delta J -C 
  $$
  on $\cH$ for some positive constants $\delta$ and $C$, 
  then $(X, L)$ is uniformly K-stable, that is, 
  $$
  \DF(\cX, \cL) \geq \delta J^{\NA}(\cX, \cL)
  $$
  holds for any normal ample test configuration $(\cX, \cL)$. 
\end{corB}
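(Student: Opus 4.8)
The plan is to feed the coercivity inequality directly into Theorem~A. Fix a normal ample test configuration $(\cX,\cL)$. Since $(\cX,\cL)$ is ample, $\cL$ carries a smooth strictly positive $S^1$-invariant metric $\Phi$ defined near the central fiber, so we are placed exactly in the setting of Theorem~A, which then supplies an associated ray $(\phi^s)_s$ of smooth positive metrics on $L$. In particular each $\phi^s$ belongs to $\cH$, so the coercivity hypothesis may be evaluated along the ray.

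Applying coercivity to $\phi^s\in\cH$ gives, for every $s>0$,
\begin{equation*}
  \frac{M(\phi^s)}{s}\geq\delta\,\frac{J(\phi^s)}{s}-\frac{C}{s}.
\end{equation*}
By Theorem~A, as $s\to+\infty$ the left-hand side converges to $M^{\NA}(\cX,\cL)$ and $J(\phi^s)/s$ converges to $J^{\NA}(\cX,\cL)$, while the remainder $C/s\to0$. Since the inequality holds for \emph{all} $s$, passing to the limit yields
\begin{equation*}
  M^{\NA}(\cX,\cL)\geq\delta\,J^{\NA}(\cX,\cL).
\end{equation*}

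To reach the Donaldson-Futaki formulation in the statement, I would invoke the comparison established in~\cite{BHJ1}: for a \emph{normal} test configuration one has $\DF(\cX,\cL)\geq M^{\NA}(\cX,\cL)$, the gap being a non-negative contribution measuring the non-reducedness of the central fiber. Combining this with the previous inequality gives $\DF(\cX,\cL)\geq\delta\,J^{\NA}(\cX,\cL)$, which is precisely uniform K-stability. There is essentially no obstacle here—this is exactly why the assertion is recorded as an \emph{immediate} consequence—and the only points demanding (routine) care are the existence of the positive metric $\Phi$, guaranteed by ampleness; the fact that one may pass to the limit because Theorem~A provides genuine convergence rather than a mere one-sided bound; and invoking the correct direction of the $\DF$--$M^{\NA}$ comparison for normal test configurations. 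All of the analytic weight is carried by Theorem~A itself.
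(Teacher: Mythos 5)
Your argument is correct and coincides with the paper's own (implicit) proof: Corollary~B is obtained exactly by evaluating the coercivity inequality along the ray furnished by a positive $S^1$-invariant metric on $\cL$, dividing by $s$, invoking Theorem~A to pass to the limit, and then applying the inequality $\DF(\cX,\cL)\ge M^{\NA}(\cX,\cL)$ for normal ample test configurations. No gaps.
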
 
Coercivity of the Mabuchi functional is known to hold if $X$ is a
K\"{a}hler-Einstein manifold without vector fields. 
This was first established in the Fano case by~\cite{PSSW};
an elegant proof can be found in~\cite{DR}. 
As a special case of a very recent result of Berman, Darvas and
Lu~\cite{BDL16}, coercivity of the Mabuchi functional also holds for 
general polarized varieties admitting a 
metric of constant scalar curvature and having discrete reduced 
automorphism group. 
Thus, if $(X,L)$ admits a constant scalar curvature 
metric and $\Aut(X,L)/\C^*$ is discrete, then $(X,L)$ is 
uniformly K-stable. The converse statement is not currently
known in general, but see below for the Fano case.

\smallskip

Next, we study coercivity of the Mabuchi functional when 
restricted to the space of Bergman metrics. For any $m\ge1$
such that $mL$ is very ample, let $\cH_m$ be the space of 
Fubini-Study type metrics on $L$, induced by the embedding of 
$X\hookrightarrow\P^{N_m}$ via $mL$.

\corr{\begin{thmC}
  Fix $m$ such that $(X,mL)$ is linearly normal, and $\d>0$. Then the
  following conditions are equivalent:
  \begin{itemize}
  \item[(i)]
    there exists $C>0$ such that $M\ge\d J-C$ on $\cH_m$. 
  \item[(ii)] 
    $\DF(\cX_\la,\cL_\la)\ge\d J^{\NA}(\cX_\la,\cL_\la)$ for all 
    1-parameter subgroups $\la$ of $\GL(N_m,\C)$;
  \item[(iii)] 
    $M^{\NA}(\cX_\la,\cL_\la)\ge\d J^{\NA}(\cX_\la,\cL_\la)$ for all 
    1-parameter subgroups $\la$ of $\GL(N_m,\C)$.
  \end{itemize}
  Here $(\cX_\la,\cL_\la)$ is the test configuration for $(X,L)$ defined by $\la$.
\end{thmC}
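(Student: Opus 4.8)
The plan is to exploit that, for fixed $m$, the space $\cH_m$ of Fubini--Study metrics is the finite-dimensional symmetric space $\GL(N_m,\C)/\U(N_m)$. Its geodesic rays from a fixed base point $\phi_0$ correspond to Hermitian directions $A$, the \emph{rational} $A$ being exactly the one-parameter subgroups $\la$, with associated test configuration $(\cX_\la,\cL_\la)$ carrying its natural Fubini--Study metric on $\cL_\la$; the Bergman rays $(\phi^s_\la)_s$ are the corresponding geodesics. Two structural inputs drive everything. First, $M$ and $J$ are convex along such geodesics: writing $J=L-E$ with $L(\phi)=V^{-1}\int(\phi-\phi_0)\om_0^n$, the energy $E$ is affine along geodesics while $L$ is convex because $\phi^s$ is convex in $s$ pointwise on $X$, and convexity of $M$ is classical. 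Second, by Theorem~A (and its generalization, Theorem~\ref{T201}, to absorb the merely semipositive case) the asymptotic slopes of $M$ and $J$ along $\la$ equal $M^\NA(\cX_\la,\cL_\la)$ and $J^\NA(\cX_\la,\cL_\la)$.

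Granting these, (i)$\Rightarrow$(iii) is immediate: dividing $M(\phi^s_\la)\ge\d J(\phi^s_\la)-C$ by $s$ and letting $s\to+\infty$ gives $M^\NA(\cX_\la,\cL_\la)\ge\d J^\NA(\cX_\la,\cL_\la)$. For (ii)$\Leftrightarrow$(iii) I would invoke the comparison from~\cite{BHJ1}, of the shape $\DF=M^\NA+V^{-1}\bigl((\cX_0-\cX_{0,\red})\cdot\cL^{n}\bigr)$, whose correction term is $\ge0$ and vanishes exactly when the central fiber is reduced; thus $\DF\ge M^\NA$, giving (iii)$\Rightarrow$(ii) at once. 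The converse uses base change: replacing $\la$ by its $d$-th power stays inside $\GL(N_m,\C)$ and realizes the degree-$d$ base change, under which $M^\NA$ and $J^\NA$ are homogeneous of degree one while $\DF(\cX_{\la^d},\cL_{\la^d})/d\to M^\NA(\cX_\la,\cL_\la)$; so (ii) for all $\la^d$ forces (iii).

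The substance is (iii)$\Rightarrow$(i). Fix $\phi\in\cH_m$, let $A$ be the Hermitian direction of the ray $\gamma_A$ through $\phi$, and put $s_0=\dist(\phi_0,\phi)$. Convexity of $J$ with $J(\phi_0)=0$ gives $J(\phi)\le J^\NA(A)\,s_0$, while convexity of $M$ gives the lower bound $M(\phi)\ge M^\NA(A)\,s_0+c_M(A)$, where $c_M(A):=\lim_{s\to\infty}\bigl(M(\gamma_A(s))-M^\NA(A)s\bigr)\le0$ is the intercept of $M$'s asymptote. First extending $M^\NA(A)\ge\d J^\NA(A)$ from rational directions (the $\la$'s) to all Hermitian $A$ by continuity of the slopes in $A$ (they are integrals against Duistermaat--Heckman measures depending continuously on $A$), I obtain
\[
  M(\phi)-\d J(\phi)\ \ge\ \bigl(M^\NA(A)-\d J^\NA(A)\bigr)s_0+c_M(A)\ \ge\ c_M(A),
\]
so that (i) reduces to the uniform lower bound $\inf_A c_M(A)>-\infty$ over the compact sphere of directions.

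This uniform control of the subleading term is the crux, and it is where convexity alone fails: since $c_M(A)=\inf_s\bigl(M(\gamma_A(s))-M^\NA(A)s\bigr)$ is an infimum of functions continuous in $A$, it is only upper semicontinuous, so compactness bounds it above but not below. The resolution is to use the algebraic, finite-dimensional nature of $M$ on $\cH_m$: via Deligne pairings (equivalently the Knudsen--Mumford expansion of the relevant determinant-of-cohomology bundles) $M$ differs by a uniformly bounded quantity from a sum of log-norms $\log\|g\cdot R\|^2$ of fixed vectors $R$ in linear representations of $\GL(N_m,\C)$, and such Kempf--Ness functions satisfy $\log\|g\cdot R\|^2\ge(\text{leading weight})\,s-C$ with $C$ depending only on the representation, hence uniformly in $A$. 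This yields $M(\gamma_A(s))\ge M^\NA(A)s-C$ uniformly, whence $\inf_A c_M(A)>-\infty$. I expect precisely this uniform remainder estimate --- rather than any formal manipulation of slopes --- to be the main obstacle, since it is what forces one out of the soft convexity framework and into the concrete finite-dimensional geometry underlying $\cH_m$.
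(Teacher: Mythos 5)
Your architecture coincides with the paper's for (i)$\Rightarrow$(iii) (divide by $s$ and apply Theorem~A along the Bergman ray of $\la$) and for (ii)$\Leftrightarrow$(iii) (the comparison $\DF\ge M^{\NA}$ plus homogeneity under base change, i.e.\ \cite[Proposition~8.2]{BHJ1}), and you correctly locate the crux of (iii)$\Rightarrow$(i) in a remainder estimate for $M-\d J$ along geodesic rays of $\GL(N_m,\C)/\U(N_m)$ that is uniform in the direction, ultimately coming from Paul's theorem (Theorem~\ref{thm:norms}) that $M$ and $J$ have log norm singularities. But the scaffolding you build around this crux has two genuine gaps. First, the convexity claims fail: Bergman rays are subgeodesics of $\cH$ but not Mabuchi geodesics, so $E$ is convex (not affine) along them, $J=L-E$ is a difference of convex functions with no definite second derivative, and the Berman--Berndtsson convexity of $M$ does not apply to such rays. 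Consequently neither $J(\phi)\le J^{\NA}(A)s_0$ nor $M(\phi)\ge M^{\NA}(A)s_0+c_M(A)$ is justified by convexity. Moreover, since $M$ and $J$ are \emph{differences} of log norms in Paul's formula~\eqref{equ:MP}, the one-sided Kempf--Ness bound $\log\|g\cdot R\|\ge(\text{slope})\,s-C$ you invoke is useless for the negatively weighted terms; a two-sided estimate is required.

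Second, the slope $A\mapsto f^{\NA}(A)$ is \emph{not} continuous in the direction, so your extension of $M^{\NA}\ge\d J^{\NA}$ from rational to arbitrary Hermitian directions ``by continuity of the slopes'' breaks down: writing $A=kDk^{-1}$, the slope is $h_{k\cdot v}(D)-h_{k\cdot w}(D)$, and the weight sets $M_{k\cdot v}$, $M_{k\cdot w}$ can shrink for special $k$, making the support functions jump (the associated Duistermaat--Heckman data jump on degenerate orbits as well). Both gaps are closed simultaneously by the paper's Theorem~\ref{thm:KN}: via the Cartan decomposition $G=KTK$, the weight decomposition, and compactness of $K$, one proves the two-sided, uniform estimate $f(k'tk)=h_{k\cdot v}(\Log|t|)-h_{k\cdot w}(\Log|t|)+O(1)$ (equation~\eqref{equ:lognorm}); then, for each \emph{fixed} $k$, nonnegativity of $h_{k\cdot v}-h_{k\cdot w}$ on the lattice $N$ of $1$-PS extends to $N_\Q$ by homogeneity and to $N_\R$ by continuity of the piecewise-linear support functions, and the decomposition $G=KTK$ then covers all of $G$. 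This mechanism replaces both your convexity inequalities and your continuity argument; once it is substituted in, your outline becomes the paper's proof.
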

Note that a different condition equivalent to~(i)--(iii) appears in ~\cite[Theorem~1.1]{Paul13}.}

\corr{The equivalence of~(ii) and~(iii) stems from the close relation
between the Donaldson-Futaki invariant and the non-Archimedean 
Mabuchi functional. In view of Theorem~A, the equivalence between~(i)
and~(iii) can be viewed as a generalization of the Hilbert-Mumford
criterion.
The proof uses in a crucial way a deep result of Paul~\cite{Paul12}, which states that the
restrictions to $\cH_m$ of the Mabuchi functional and the
$J$-functional have log norm singularities (see~\S\ref{sec:CM}).}

\corr{Since every ample test configuration arises as a 1-parameter subgroup 
$\la$ of $\GL(N_m,\C)$ for some $m$, Theorem~C implies
\begin{corD}
  A polarized manifold $(X,L)$ is uniformly K-stable iff there exist
  $\d>0$ and a sequence $C_m>0$ such that $M\ge\d J-C_m$ on $\cH_m$ 
  for all sufficiently divisible $m$. 
\end{corD}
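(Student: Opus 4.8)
The plan is to read off Corollary~D from Theorem~C, using the two structural facts recalled in the introduction: first, that uniform K-stability is equivalent to the inequality $M^{\NA}(\cX,\cL)\ge\d J^{\NA}(\cX,\cL)$ holding for \emph{every} ample test configuration (not merely the normal ones), because $M^{\NA}$ and $J^{\NA}$ depend only on the normalization; and second, the Hilbert--Mumford-type statement that every ample test configuration for $(X,L)$ arises, up to normalization, from a one-parameter subgroup $\la$ of $\GL(N_m,\C)$ for some sufficiently divisible $m$. The one genuinely quantitative point to keep in mind is that the slope $\d$ must be the \emph{same} for every level $m$, whereas the additive constants $C_m$ are permitted to depend on $m$ and play no role in the definition of uniform K-stability; it is this asymmetry that makes the two conditions match up.

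For the implication ``coercivity on all $\cH_m$ $\Rightarrow$ uniform K-stability'', I would fix $\d>0$ and constants $C_m>0$ with $M\ge\d J-C_m$ on $\cH_m$ for all sufficiently divisible $m$. For each such $m$, with $(X,mL)$ linearly normal, this is exactly condition~(i) of Theorem~C with the given $\d$, so the equivalence (i)$\Leftrightarrow$(iii) yields $M^{\NA}(\cX_\la,\cL_\la)\ge\d J^{\NA}(\cX_\la,\cL_\la)$ for every one-parameter subgroup $\la$ of $\GL(N_m,\C)$. Now let $(\cX,\cL)$ be an arbitrary ample test configuration. By the surjectivity statement it coincides, after normalization, with some $(\cX_\la,\cL_\la)$; enlarging $m$ to a multiple if necessary, I may take $m$ sufficiently divisible. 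Since $M^{\NA}$ and $J^{\NA}$ are unchanged by normalization, the inequality transfers to $(\cX,\cL)$, giving $M^{\NA}(\cX,\cL)\ge\d J^{\NA}(\cX,\cL)$ with $\d$ independent of $(\cX,\cL)$; this is uniform K-stability.

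For the converse, assume $(X,L)$ is uniformly K-stable with constant $\d>0$, \ie $M^{\NA}\ge\d J^{\NA}$ on all ample test configurations. Restricting to the test configurations $(\cX_\la,\cL_\la)$ attached to one-parameter subgroups of $\GL(N_m,\C)$ gives condition~(iii) of Theorem~C for every sufficiently divisible $m$. Applying (iii)$\Rightarrow$(i), for each such $m$ there is a constant $C_m>0$ with $M\ge\d J-C_m$ on $\cH_m$, and the slope $\d$ is common to all levels. This delivers the required pair $(\d,(C_m))$.

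The essentially formal obstacle to watch is the interplay between the level $m$ and normalization: one must use that $M^{\NA}$ and $J^{\NA}$ factor through the normalization of a test configuration (so that working with the possibly non-normal $(\cX_\la,\cL_\la)$ is harmless), and that a test configuration realized at level $m_0$ is equally realized at any multiple of $m_0$ without changing these intrinsic non-Archimedean invariants, which is what legitimizes the passage to ``sufficiently divisible'' $m$. Granting these facts from~\cite{BHJ1}, Corollary~D is a direct consequence of Theorem~C.
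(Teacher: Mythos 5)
Your argument is correct and follows the same route as the paper, which deduces Corollary~D from Theorem~C together with the fact (recalled in~\S\ref{S109}) that every ample test configuration is induced by a $1$-PS of $\GL(N_m,\C)$ for suitable $m$. The extra points you make explicit --- invariance of $M^{\NA}$ and $J^{\NA}$ under normalization and the uniformity of $\d$ across levels $m$ --- are exactly the implicit ingredients in the paper's one-line proof.
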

Following Paul and Tian~\cite{PT1,PT2}, we say that $(X,mL)$ is \emph{CM-stable} 
when there exist $C,\d>0$ such that $M\ge\d J-C$ on $\cH_m$.
\begin{corE}
  If $(X,L)$ is uniformly K-stable, then $(X,mL)$ is CM-stable 
  for any sufficiently divisible positive integer $m$. 
  Hence the reduced automorphism group is finite.
\end{corE}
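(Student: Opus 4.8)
The plan is to obtain CM-stability immediately from Corollary~D, and then to deduce finiteness of the reduced automorphism group by feeding automorphism orbits into the resulting coercivity inequality: along such orbits the Mabuchi functional is affine, while $J$ blows up.

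First I would note that the CM-stability assertion is a direct reformulation of Corollary~D. Since $(X,L)$ is uniformly K-stable, Corollary~D supplies a single $\delta>0$ together with constants $C_m>0$ such that $M\ge\delta J-C_m$ on $\cH_m$ for all sufficiently divisible $m$. Fixing any such $m$ and setting $C:=C_m$, the inequality $M\ge\delta J-C$ on $\cH_m$ is precisely the definition of CM-stability of $(X,mL)$. By Theorem~C this is in turn equivalent to
\[
\DF(\cX_\lambda,\cL_\lambda)\ge\delta\,J^{\NA}(\cX_\lambda,\cL_\lambda)
\quad\text{for every $1$-parameter subgroup $\lambda$ of $\GL(N_m,\C)$.}
\]

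For the finiteness statement I would argue by contradiction: suppose $G:=\Aut(X,L)/\C^*$ is infinite. As a linear algebraic group it is then positive-dimensional, and its identity component $G^0$ carries a maximal torus $T$. Consider first the case $\dim T>0$. Then $G$ contains a nontrivial cocharacter, which (for $m$ large and divisible) is realized as a $1$-parameter subgroup $\lambda$ of $\GL(N_m,\C)$ preserving $X\hookrightarrow\P^{N_m}$. Because $\lambda$ comes from an automorphism, $(\cX_{\pm\lambda},\cL_{\pm\lambda})$ are product test configurations, each with reduced central fibre $X$; hence they are nontrivial, so $J^{\NA}(\cX_{\pm\lambda})>0$, and $\DF$ reduces to the classical Futaki invariant of the generator, which is linear, giving $\DF(\cX_{-\lambda})=-\DF(\cX_\lambda)$. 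Applying the displayed inequality to both $\lambda$ and $-\lambda$ and adding,
\[
0=\DF(\cX_\lambda)+\DF(\cX_{-\lambda})\ge\delta\bigl(J^{\NA}(\cX_\lambda)+J^{\NA}(\cX_{-\lambda})\bigr)>0,
\]
a contradiction. Hence $\dim T=0$, and therefore $G^0$ is unipotent.

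It remains to exclude a positive-dimensional unipotent $G^0$. Pick a $1$-parameter subgroup $\{\sigma_t\}\cong\G_a$ with nonzero nilpotent generator $v$, and form the orbit ray $\phi^t:=\sigma_t^*\phi_0$ in $\cH_m$. By the classical computation of Futaki and Mabuchi, $t\mapsto M(\phi^t)$ is affine with slope the Futaki invariant of $v$; since $v$ acts on $\bigoplus_m H^0(X,mL)$ through nilpotent, hence traceless, operators, this invariant vanishes and $M(\phi^t)\equiv M(\phi_0)$ is bounded. On the other hand the orbit is noncompact, so $J(\phi^t)\to+\infty$ as $t\to+\infty$, and then $M(\phi^t)\ge\delta J(\phi^t)-C$ fails for $t\gg0$. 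Thus $G^0$ is trivial and $G$ is finite. I expect the real difficulty to sit in this unipotent case: a $\G_a$-orbit does not arise from a test configuration, so Theorem~A is unavailable, and one must instead show by hand that $J(\phi^t)\to+\infty$ along a noncompact orbit inside the finite-dimensional symmetric space $\cH_m$, as well as confirm the vanishing of the relevant Futaki invariant. The multiplicative case, by contrast, is settled cleanly by playing the antisymmetry $\DF(\cX_{-\lambda})=-\DF(\cX_\lambda)$ against the strict positivity $J^{\NA}>0$.
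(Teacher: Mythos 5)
Your derivation of the CM-stability assertion from Corollary~D is exactly what the paper does, and your treatment of the multiplicative case --- playing the antisymmetry $\DF(\cX_{-\la},\cL_{-\la})=-\DF(\cX_\la,\cL_\la)$ for product configurations against the strict positivity $J^{\NA}>0$ for nontrivial normal configurations --- is correct (indeed it already follows from uniform K-stability alone). The gap is in the unipotent case, and it is genuine, as you partly concede. The paper does not attempt this argument at all: it deduces finiteness of $\Aut(X,L)/\C^*$ directly from CM-stability by invoking Paul's theorem \cite[Corollary~1.1]{Paul13}, whose proof is precisely designed to handle the non-reductive directions.

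Concretely, two steps of your unipotent argument do not go through as stated. First, ``the orbit is noncompact, so $J(\phi^t)\to+\infty$'' is not a valid implication in $\cH_m$: Proposition~\ref{prop:bounded} exhibits noncompact Fubini--Study rays $\phi_{\la(e^{-s})}$, attached to nontrivial but almost trivial test configurations, along which $J$ (and $M$) remain bounded, so properness of $J$ on noncompact orbits fails in general. Since $J$ has log norm singularities (Theorem~\ref{thm:norms}), along $\sigma_t=\exp(tv)$ with $v$ nilpotent one only gets $J(\phi^t)=c\log t+O(1)$ for some $c\ge 0$, and one must actually prove $c>0$ for automorphism orbits; nothing in your sketch does this. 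Second, the vanishing of the slope of $M$ along the $\G_a$-orbit is asserted via ``nilpotent hence traceless operators'', but the weight/trace description of the Futaki invariant presupposes a $\C^*$-action; for a unipotent $G^0$ the relevant Lie algebra may be abelian, so the character property of the Futaki invariant gives no information, and a separate argument is required. Unless both points are supplied, the finiteness claim is unproved; the efficient fix is the paper's: cite \cite[Corollary~1.1]{Paul13}, which states that CM-stability of $(X,mL)$ forces $\Aut(X,L)/\C^*$ to be finite.
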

Here the last statement follows from a result by Paul~\cite[Corollary~1.1]{Paul13}.}

\medskip
Let us now comment on the relation of uniform K-stability to the
existence of K\"ahler-Einstein metrics on Fano manifolds.
In~\cite{CDS15}, Chen, Donaldson and Sun proved that a Fano manifold $X$ 
admits a K\"ahler-Einstein metric iff it is K-polystable; see also~\cite{Tian15}.
Since then, several new proofs have appeared. 
Datar and Sz\'ekelyhidi~\cite{DSz15} proved an equivariant
version of the conjecture, using Aubin's original continuity method.
Chen, Sun and Wang~\cite{CSW15} 
gave a proof using the K\"ahler-Ricci flow.

In~\cite{BBJ15}, Berman and the first and last authors of the current
paper used a variational method to prove a slightly different statement:
in the absence of vector fields, the existence of a K\"ahler-Einstein
metric is equivalent to uniform K-stability. In fact, the direct
implication uses Corollary~B above.

\corr{In~\S\ref{S106} we outline a different proof of the fact that a
uniformly K-stable Fano manifold admits a K\"ahler-Einstein metric.
Our method, which largely follows ideas of Tian, 
relies on Sz\'{e}kelyhidi's partial $C^0$-estimates~\cite{Sze3} 
along the Aubin continuity path, together with Corollary~D.}

\smallskip
As noted above, uniform K-stability implies that the reduced
automorphism group of $(X,L)$ is discrete. In the presence of vector
fields, there should presumably be a natural notion of uniform K-polystability. 
We hope to address this in future work.

\smallskip
There have been several important developments since a first draft of
the current paper was circulated.
First, Z.~Sj\"ostr\"om Dyrefelt~\cite{SD16} and, independently,
R.~Dervan and J.~Ross~\cite{DR16}, proved a transcendental version of Theorem~A.
Second, as mentioned above, it was proved in~\cite{BBJ15} that in the 
case of a Fano manifold without holomorphic vector fields, 
uniform K-stability is equivalent to coercivity of the Mabuchi
functional, and hence to the existence of a K\"ahler-Einstein metric.
Finally, the results in this paper were used in~\cite{BDL16} to
prove that an arbitrary polarized pair $(X,L)$ admitting a cscK metric
must be K-polystable.

\medskip
The organization of the paper is as follows. 
In the first section, we review several classical energy functionals in
K\"ahler geometry and their interpretation as metrics on suitable Deligne pairings. 
Then, in~\S\ref{S104}, we recall some non-Archimedean notions
from~\cite{BHJ1}.
Specifically, a non-Archimedean metric is an equivalence
class of test configurations, and the 
non-Archimedean analogues of the energy functionals in~\S\ref{S103}
are defined using intersection numbers.
In~\S\ref{S105} we prove Theorem~A relating the classical and 
non-Archimedean functionals via subgeodesic rays.
These results are generalized to the logarithmic setting in~\S\ref{S110}.
Section~\ref{sec:CM} is devoted to the relation between uniform
K-stability and CM-stability. In particular, we prove Theorem~C and 
Corollaries~D and~E.
Finally, in~\S\ref{S106}, we show how to use Sz\'ekelyhidi's partial
$C^0$-estimates along the Aubin continuity path together with
CM-stability to prove that a uniformly K-stable Fano manifold admits
a K\"ahler-Einstein metric.

\begin{ackn} 
The authors would like to thank Robert Berman for very useful
discussions. The first author is also grateful to Marco Maculan,
Vincent Guedj and Ahmed Zeriahi for helpful conversations. He was
partially supported by the ANR projects GRACK, MACK and POSITIVE.\@
The second author was supported by JSPS KAKENHI Grant Number 25-6660 and 15H06262.   
The last author was partially supported by NSF grant DMS-1266207,
the Knut and Alice Wallenberg foundation, 
and the United States---Israel Binational Science Foundation.

\end{ackn}
%
%
%%%%%%%%%%%%%%%%%%%%%%%%%%%%%%%%%%%%%%%%%%%%%%%%%%%%%%%%%%%%%%%%%%%
%
%
%\newpage
\section{Deligne pairings and energy functionals}\label{S103}
In this section we recall the definition and main properties of the
Deligne pairing, as well as its relation to classical functionals in 
K\"ahler geometry.
%
%%%%%%%%%%%%%%%%%%%%%%%%%%%%%%%%%%%%%%%%%%%%%%%%%%%%%%%%%%%%%%%%%%%
%
\subsection{Metrics on line bundles}\label{S107}
We use additive notation for line bundles and metrics.
If, for $i=1,2$, $\phi_i$ is a metric on a line bundle $L_i$ on
$X$ and $a_i\in\Z$, then $a_1\phi_1+a_2\phi_2$ is a metric on $a_1L_1+a_2L_2$.
This allows us to define metrics on $\Q$-line bundles.
A metric on the trivial line bundle will be identified with a function on $X$. 

If $\sigma$ is a (holomorphic) section of a line bundle $L$ on a complex analytic space $X$, 
then $\log|\sigma|$ stands for the corresponding (possibly singular) metric on $L$.
For any metric $\phi$ on $L$, 
$\log|\sigma|-\phi$ is therefore a function, and
\begin{equation*}
  |\sigma|_\phi:=|\sigma|e^{-\phi}=\exp(\log|\sigma|-\phi)
\end{equation*}
is the length of $\sigma$ in the metric $\phi$. 

We normalize the operator $d^c$ so that $dd^c=\tfrac{i}{\pi}\partial\bar\partial$, and set (somewhat abusively) 
\begin{equation*}
  dd^c\phi:=-dd^c\log|\sigma|_\phi
\end{equation*}
for any local trivializing section $\sigma$ of $L$. The globally defined $(1,1)$-form (or current) 
$dd^c\phi$ is the curvature of $\phi$, normalized so that it represents the (integral) first Chern class of $L$. 

If $X$ is a complex manifold of dimension $n$ and $\eta$ is a holomorphic $n$-form
on $X$, then 
\begin{equation*}
  |\eta|^2:=\frac{i^{n^2}}{2^n}\eta\wedge\bar\eta
\end{equation*}
defines a natural (smooth, positive) volume form on $X$. 
More generally, there is a bijection between smooth metrics on the canonical bundle $K_X$ 
and (smooth, positive) volume forms on $X$, which associates to a smooth metric $\phi$ 
on $K_X$ the volume form $e^{2\phi}$ locally defined by 
\begin{equation*}
  e^{2\phi}:=|\eta|^2/|\eta|^2_\phi
\end{equation*}
for any local section $\eta$ of $K_X$. 

If $\omega$ is a positive $(1,1)$-form on $X$ and $n=\dim X$, 
then $\omega^n$ is a volume form, so $-\half\log\omega^n$ is a 
metric on $-K_X$ in our notation. The \emph{Ricci form} of $\om$ is defined as the curvature
\begin{equation*}
  \Ric\omega:=-dd^c\half\log\omega^n
\end{equation*}
of $\omega$ of this metric; 
it is thus a smooth $(1,1)$-form in the cohomology class $c_1(X)$ of $-K_X$. 

If $\phi$ is a smooth positive metric on a line bundle $L$ on $X$, we denote by $S_\phi\in C^\infty(X)$ 
the \emph{scalar curvature} of the K\"ahler form $dd^c\phi$; it satisfies 
\begin{equation}\label{equ:scal}
  S_\phi(dd^c\phi)^n=n\Ric(dd^c\phi)\wedge(dd^c\phi)^{n-1}.
\end{equation}

%
%%%%%%%%%%%%%%%%%%%%%%%%%%%%%%%%%%%%%%%%%%%%%%%%%%%%%%%%%%%%%%%%%%%
%
\subsection{Deligne pairings}
While the construction below works in greater generality
\cite{Elk1,Zha96,MG}, we will restrict ourselves to the following
setting. Let $\pi\colon Y\to T$ be a flat, projective morphism between smooth complex 
algebraic varieties, of relative dimension $n\ge0$. Given line bundles 
$L_0,\dots,L_n$ on $Y$, consider the intersection product 
$$
L_0\cdot\ldots\cdot L_n\cdot[Y]\in\CH_{\dim Y-(n+1)}(Y)=\CH_{\dim T-1}(Y). 
$$
Its push-forward belongs to $\CH_{\dim T-1}(T)=\Pic(T)$ since $T$ is
smooth, and hence defines an \emph{isomorphism class} of line bundle on $T$. The \emph{Deligne pairing} of $L_0,\dots,L_n$ selects in a canonical way a specific representative of this isomorphism class, denoted by
$$
\langle L_0,\dots,L_n\rangle_{Y/T}.
$$ 
The pairing is functorial, multilinear, and commutes with base change. It further satisfies the following key inductive property: if $Z_0$ is a non-singular divisor in $Y$, flat over $T$ and defined by a section $\sigma_0\in H^0(Y,L_0)$, then we have a canonical identification
\begin{equation}\label{equ:isomind}
\langle L_0,\dots,L_n\rangle_{Y/T}=\langle L_1|_{Z_0},\dots,L_n|_{Z_0}\rangle_{Z_0/T}.
\end{equation}
For $n=0$, $\langle L_0\rangle_{Y/T}$ coincides with the norm of $L_0$
with respect to the finite flat morphism $Y\to T$. These properties
uniquely characterize the Deligne pairing. Indeed, writing each $L_i$ as a difference 
of very ample line bundles, multilinearity reduces the situation to the case where the $L_i$ 
are very ample. We may thus find non-singular divisors $Z_i\in|L_i|$ with $\bigcap_{i\in I} Z_i$ 
non-singular and flat over $T$ for each set $I$ of indices, and we get
$$
\langle L_0,\dots,L_n\rangle_{Y/T}=\langle L_n|_{Z_0\cap\dots\cap Z_{n-1}}\rangle_{Z_0\cap\dots\cap Z_{n-1}/T}.
$$
%
%%%%%%%%%%%%%%%%%%%%%%%%%%%%%%%%%%%%%%%%%%%%%%%%%%%%%%%%%%%%%%%%%%%
%
\subsection{Metrics on Deligne pairings}
We use~\cite{Elk2,Zha96,Mor} as references. Given a smooth metric $\phi_j$ on each $L_j$, the Deligne pairing $\langle L_0,\dots,L_n\rangle_{Y/T}$ can be endowed with a continuous metric 
$$
\langle\phi_0,\dots,\phi_n\rangle_{Y/T},
$$
smooth over the smooth locus of $\pi$, the construction being functorial, multilinear, 
and commuting with base change. It is basically constructed by requiring that 
\begin{equation}\label{equ:metrind}
  \langle\phi_0,\dots,\phi_n\rangle_{Y/T}
  =\langle\phi_1|_{Z_0},\dots,\phi_n|_{Z_0}\rangle_{Z_0/T}
  -\int_{Y/T}\log|\sigma_0|_{\phi_0}dd^c\phi_1\wedge\dots\wedge dd^c\phi_n
\end{equation}
in the notation of~\eqref{equ:isomind}, with $\int_{Y/T}$ denoting
fiber integration, \ie the push-forward by $\pi$ as a current. By
induction, the continuity of the metric $\langle\phi_0,\dots,\phi_n\rangle$ 
reduces to that of $\int_{Y/T}\log|\sigma_0|_{\phi_0}dd^c\phi_1\wedge\dots\wedge dd^c\phi_n
$, and thus follows from~\cite[Theorem 4.9]{Stol}. 

\begin{rmk} 
  As explained in~\cite[I.1]{Elk2}, arguing by induction, the key point in checking 
  that~\eqref{equ:metrind} is well-defined is the following symmetry property: 
  if $\sigma_1\in H^0(Y, L_1)$ is a section with divisor $Z_1$ such that both $Z_1$ and $Z_0\cap Z_1$ 
  are non-singular and flat over $T$, then 
  \begin{align*}
    &\int_{Y/T}\log|\sigma_0|_{\phi_0}dd^c\phi_1\wedge\a+\int_{Z_0/T}\log|\sigma_1|_{\phi_1}\a\\
    =&\int_{Y/T}\log|\sigma_1|_{\phi_1}dd^c\phi_0\wedge\a+\int_{Z_1/T}\log|\sigma_0|_{\phi_0}\a
  \end{align*}
  with $\a=dd^c\phi_2\wedge\dots\wedge dd^c\phi_n$. 
  By the Lelong--Poincar\'e formula, the above equality reduces to
  $$
  \pi_*\left(\log|\sigma_0|_{\phi_0}dd^c\log|\sigma_1|_{\phi_1}\wedge\a\right)
  =\pi_*\left(\log|\sigma_1|_{\phi_1}dd^c\log|\sigma_0|_{\phi_0}\wedge\a\right), 
  $$
  which holds by Stokes' formula applied to a monotone regularization of 
  the quasi-psh functions $\log|\sigma_i|_{\phi_i}$. 
\end{rmk}
Metrics on Deligne pairings satisfy the following two crucial properties, which are direct consequences of~\eqref{equ:metrind}. 
\begin{itemize}
\item[(i)] The curvature current of $\langle\phi_0,\dots,\phi_n\rangle_{Y/T}$ satisfies  
\begin{equation}\label{equ:curv}
dd^c\langle\phi_0,\dots,\phi_n\rangle_{Y/T}=\int_{Y/T}dd^c\phi_0\wedge\dots\wedge dd^c\phi_n, 
\end{equation}
where again $\int_{Y/T}$ denotes fiber integration.
\item[(ii)] Given another smooth metric $\phi_0'$ on $L_0$, we have the change of metric formula
\begin{equation}\label{equ:change} 
\langle\phi'_0,\phi_1,\dots,\phi_n\rangle_{Y/T}-\langle\phi_0,\phi_1,\dots,\phi_n\rangle_{Y/T}=\int_{Y/T}(\phi'_0-\phi_0)dd^c\phi_1\wedge\dots\wedge dd^c\phi_n. 
\end{equation}
\end{itemize}
%
%%%%%%%%%%%%%%%%%%%%%%%%%%%%%%%%%%%%%%%%%%%%%%%%%%%%%%%%%%%%%%%%%%%
%
\subsection{Energy functionals}\label{S101}
Let $(X,L)$ be a polarized manifold, \ie a smooth projective complex variety $X$ with an ample 
line bundle $L$. Set
\begin{equation*}
  V:=(L^n)
  \quad\text{and}\quad
  \bar S:=-nV^{-1}(K_X\cdot L^{n-1}),
\end{equation*}
where $n=\dim X$. 
Denote by $\cH$ the set of smooth positive metrics $\phi$ on $L$. 
For $\phi\in\cH$, set $\MA(\phi):=V^{-1}(dd^c\phi)^n$.
Then $\MA(\phi)$ is a probability measure equivalent to Lebesgue measure, and 
$\int_X S_\phi\MA(\phi)=\bar S$ by~\eqref{equ:scal}. 

We recall the following functionals in K\"ahler geometry. Fix a
reference metric $\phi_{\re}\in\cH$. Our notation largely follows~\cite{BBGZ,BBEGZ}.
\begin{itemize}
\item[(i)]
  The \emph{Monge-Amp\`ere energy functional} is given by
  \begin{equation}\label{equ:E}
    E(\phi)
    =\frac{1}{n+1}\sum_{j=0}^nV^{-1}\int_X(\phi-\phi_{\re})(dd^c\phi)^j\wedge (dd^c\phi_{\re})^{n-j}.
  \end{equation}
\item[(ii)]
  The \emph{$J$-functional} is a translation invariant version of $E$, defined as
  \begin{equation}\label{equ:J}
    J(\phi):=\int_X(\phi-\phi_{\re})\MA(\phi_{\re})-E(\phi).  
  \end{equation}
  The closely related \emph{$I$-functional} is defined by 
  \begin{equation}\label{equ:I}
    I(\phi):=\int_X(\phi-\phi_{\re})\MA(\phi_{\re})-\int_X(\phi-\phi_{\re})\MA(\phi).
  \end{equation}
\item[(iii)]
  For any closed $(1,1)$-form $\theta$, the \emph{$\theta$-twisted
    Monge-Amp\`ere energy} is given by
  \begin{equation}\label{equ:ERic}
    E_\theta(\phi)
    =\frac1n\sum_{j=0}^{n-1}V^{-1}\int_X(\phi-\phi_{\re})(dd^c\phi)^j\wedge (dd^c\phi_{\re})^{n-1-j}\wedge\theta. 
  \end{equation}
  Taking $\theta:=-n\Ric(dd^c \phi_{\re})$, we obtain the 
  \emph{Ricci energy}  $R:=-E_{n\Ric(dd^c\phi_{\re})}$. 
\item[(iv)]
  The \emph{entropy} of $\phi\in\cH$ is defined as 
  \begin{equation}\label{equ:H}
    H(\phi):=\half\int_X\log\left[\frac{\MA(\phi)}{\MA(\phi_{\re})}\right]\MA(\phi),
  \end{equation}
  that is, (half) the relative entropy of the probability measure
  $\MA(\phi)$ with respect to $\MA(\phi_{\re})$. 
  We have $H(\phi)\ge 0$, with equality iff $\phi-\phi_{\re}$ is constant. 
\item[(v)] 
  The \emph{Mabuchi functional} (or K-energy) can now be defined via
  the Chen-Tian formula~\cite{Che2} (see also~\cite[Proposition
  3.1]{BB}) as 
  \begin{equation}\label{equ:varM}
    M(\phi)=H(\phi) +R(\phi)+\bar S E(\phi).
  \end{equation}
\end{itemize}

These functionals vanish at
$\phi_\re$ and satisfy the variational formulas:
\begin{align*}
  \delta E(\phi) &=\MA(\phi)=V^{-1}(dd^c\phi)^n\\
  \delta E_\theta(\phi) &= V^{-1}(dd^c\phi)^{n-1}\wedge\theta\\
  \delta R(\phi) &=-nV^{-1}(dd^c\phi)^{n-1}\wedge\Ric(dd^c\phi_\re)\\
  \delta H(\phi) &=nV^{-1}(dd^c\phi)^{n-1}\wedge(\Ric(dd^c\phi_\re)-\Ric(dd^c\phi))\\
  \delta M(\phi) &=(\bar S-S_\phi)\MA(\phi)
\end{align*}
In particular, $\phi$ is a critical point of $M$ iff $dd^c\phi$ is a cscK metric. 

The functionals $I$, $J$ and $I-J$ are comparable in the sense that 
\begin{equation}\label{e106}
  \frac1n J\le I-J\le nJ
\end{equation}
on $\cH$.
For $\phi\in\cH$ we have 
$J(\phi)\ge 0$, with equality iff $\phi-\phi_{\re}$ is
constant. These properties are thus also shared by $I$ and $I-J$.

The functionals $H$, $I$, $J$, $M$ are translation invariant in the
sense that $H(\phi+c)=H(\phi)$ for $c\in\R$. For $E$ and $R$ we instead have
$E(\phi+c)=E(\phi)+c$ and $R(\phi+c)=R(\phi)-\bar Sc$, respectively.
% 
%%%%%%%%%%%%%%%%%%%%%%%%%%%%%%%%%%%%%%%%%%%%%%%%%%%%%%%%%%%%%%%%%%%
%
\subsection{Energy functionals as Deligne pairings}\label{S108}
The functionals above can be expressed using Deligne pairings, 
an observation going back at least to~\cite{PS6}. Note that any metric $\phi\in\cH$ induces a 
smooth metric $\half\log\MA(\phi)$ on $K_X$. 
The following identities are now easy consequences of the change of 
metric formula~\eqref{equ:change}.
\begin{lem}\label{lem:funcdel} 
  For any $\phi\in\cH$ we have
  \begin{align*}
    (n+1)VE(\phi)
    &=\langle\phi^{n+1}\rangle_X
      -\langle\phi_{\re}^{n+1}\rangle_X;\\
    VJ(\phi)
    &=\langle\phi,\phi_{\re}^n\rangle_X
      -\langle\phi_{\re}^{n+1}\rangle_X
      -\frac{1}{n+1}\left[\langle\phi^{n+1}\rangle_X 
      -\langle\phi_{\re}^{n+1}\rangle_X\right];\\
    VI(\phi)
    &=\langle\phi-\phi_\re,\phi_{\re}^n\rangle_X
      -\langle\phi-\phi_\re,\phi^n\rangle_X;\\
    V R(\phi)
    &=\langle\half\log\MA(\phi_{\re}),\phi^n\rangle_X
      -\langle\half\log\MA(\phi_{\re}),\phi_{\re}^n\rangle_X;\\
    V H(\phi) 
    &=\langle\half\log\MA(\phi),\phi^n\rangle_X 
      -\langle\half\log\MA(\phi_{\re}),\phi^n\rangle_X;\\
    V M(\phi) 
    &=\langle\half\log\MA(\phi),\phi^n\rangle_X 
      -\langle\half\log\MA(\phi_{\re}),\phi_\re^n\rangle_X\\
    &+\frac{\bar S}{n+1}\left[
      \langle\phi^{n+1}\rangle_X-\langle\phi_{\re}^{n+1}\rangle_X
      \right],
  \end{align*}
  where $\langle\ \rangle_X$ denotes the Deligne pairing with respect
  to the constant map $X\to\{\mathrm{pt}\}$.
\end{lem}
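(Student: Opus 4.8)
The plan is to derive all six identities from the single change-of-metric formula \eqref{equ:change}, combined with the multilinearity and symmetry of the Deligne pairing; since the base $T$ is a point here, each pairing is simply a real number and $\int_{X/T}=\int_X$. The one device I would set up first is a telescoping identity. Fix the metrics $\phi,\phi_{\re}\in\cH$ on $L$, an integer $k\ge1$, and an optional auxiliary metric $\chi$ on a line bundle, and introduce the intermediate pairings $a_j:=\langle\chi,\phi^j,\phi_{\re}^{k-j}\rangle_X$ carrying $j$ copies of $\phi$ and $k-j$ copies of $\phi_{\re}$ (omit $\chi$ if it is absent). By symmetry I may view the $(j{+}1)$-st replacement of a $\phi_{\re}$-slot by $\phi$ as occurring in the zeroth slot, so \eqref{equ:change} yields
\[
a_{j+1}-a_j=\int_X(\phi-\phi_{\re})\,dd^c\chi\wedge(dd^c\phi)^j\wedge(dd^c\phi_{\re})^{k-1-j},
\]
with the factor $dd^c\chi$ suppressed when $\chi$ is absent. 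Summing over $0\le j\le k-1$ telescopes the left side to $\langle\chi,\phi^k\rangle_X-\langle\chi,\phi_{\re}^k\rangle_X$.

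Applying this with no auxiliary slot and $k=n+1$ gives $(n+1)VE(\phi)=\langle\phi^{n+1}\rangle_X-\langle\phi_{\re}^{n+1}\rangle_X$, which is the first identity. The formulas for $J$ and $I$ then follow formally. For $J$, a single instance of \eqref{equ:change} gives $\int_X(\phi-\phi_{\re})(dd^c\phi_{\re})^n=\langle\phi,\phi_{\re}^n\rangle_X-\langle\phi_{\re}^{n+1}\rangle_X$, and subtracting $VE(\phi)$ according to \eqref{equ:J} produces the stated expression. For $I$, I would regard $\phi-\phi_{\re}$ as a metric on the trivial bundle $\cO_X$ and, using that the pairing of the trivial metric on $\cO_X$ vanishes, write $\int_X(\phi-\phi_{\re})(dd^c\phi_{\re})^n$ and $\int_X(\phi-\phi_{\re})(dd^c\phi)^n$ as $\langle\phi-\phi_{\re},\phi_{\re}^n\rangle_X$ and $\langle\phi-\phi_{\re},\phi^n\rangle_X$ respectively; subtracting reproduces \eqref{equ:I}.

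For the curvature functionals the key observation is that any $\phi\in\cH$ induces a smooth metric $\half\log\MA(\phi)$ on $K_X$ whose curvature is $dd^c\,\half\log\MA(\phi)=-\Ric(dd^c\phi)$, immediate from the normalization $\Ric\om=-dd^c\half\log\om^n$ together with the fact that $\MA(\phi)$ differs from $(dd^c\phi)^n$ by the constant $V^{-1}$. For $R$ I would apply the telescoping identity with fixed slot $\chi=\half\log\MA(\phi_{\re})$ and $k=n$; substituting $dd^c\chi=-\Ric(dd^c\phi_{\re})$ turns $\langle\chi,\phi^n\rangle_X-\langle\chi,\phi_{\re}^n\rangle_X$ into exactly $VR(\phi)=-V\,E_{n\Ric(dd^c\phi_{\re})}(\phi)$ of \eqref{equ:ERic}. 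For $H$, a single change-of-metric step in the $K_X$-slot gives
\[
\langle\tfrac12\log\MA(\phi),\phi^n\rangle_X-\langle\tfrac12\log\MA(\phi_{\re}),\phi^n\rangle_X
=\int_X\bigl(\tfrac12\log\tfrac{\MA(\phi)}{\MA(\phi_{\re})}\bigr)(dd^c\phi)^n=VH(\phi)
\]
by \eqref{equ:H}.

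Finally, the identity for $M$ is obtained by adding the expressions for $VH(\phi)$, $VR(\phi)$, and $\bar S\,VE(\phi)$ dictated by the Chen--Tian formula \eqref{equ:varM}: the two occurrences of $\langle\half\log\MA(\phi_{\re}),\phi^n\rangle_X$ cancel, leaving exactly the claimed formula. I do not expect a genuine obstacle here—everything reduces to iterating \eqref{equ:change} and keeping track of curvature forms. The only points needing care are the justification that \eqref{equ:change} may be applied in any slot (via the symmetry of the pairing), the vanishing of the pairing of the trivial metric on $\cO_X$ used in the $I$-computation, and the curvature identity $dd^c\half\log\MA(\phi)=-\Ric(dd^c\phi)$ underlying the $R$- and $H$-formulas.
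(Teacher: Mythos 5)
Your proposal is correct and follows exactly the route the paper intends: the lemma is stated there as an ``easy consequence of the change of metric formula~\eqref{equ:change}'', and your telescoping argument, the identification of $\phi-\phi_\re$ with a metric on $\cO_X$, and the curvature identity $dd^c\half\log\MA(\phi)=-\Ric(dd^c\phi)$ are precisely the details being left to the reader. All six identities check out, including the cancellation that produces the formula for $M$ from the Chen--Tian decomposition.
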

\begin{rmk}
  The formulas above make it evident that instead of fixing a
  reference metric $\phi_\re\in\cH$, we could view 
  $E$, $H+R$ and $M$ as metrics on suitable multiples of the complex lines 
  $\langle L^{n+1}\rangle_X$, 
  $\langle K_X,L^n\rangle_X$,
  and $(n+1)\langle K_X,L^n\rangle_X+\bar S \langle L^{n+1}\rangle_X$,
  respectively.
\end{rmk}
\begin{rmk}\label{R201}
  In the definition of $R$, we could replace $-\Ric dd^c\phi_\re$ by 
  $dd^c\p_\re$ for any smooth metric $\p_\re$ on $K_X$.
  Similarly, in the definition of $H$, we could replace the reference measure $\MA(\phi_\re)$
  by $e^{2\p_\re}$. Doing so, and keeping the Chen-Tian formula, 
  would only change the Mabuchi functional $M$ by an additive constant. 
\end{rmk}
% 
%%%%%%%%%%%%%%%%%%%%%%%%%%%%%%%%%%%%%%%%%%%%%%%%%%%%%%%%%%%%%%%%%%%
%
\subsection{The Ding functional}
Now suppose $X$ is a Fano manifold, that is, $L:=-K_X$ is ample. Any
metric $\phi$ on $L$ then induces a positive volume form $e^{-2\phi}$ on
$X$. The \emph{Ding functional}~\cite{Din88} on $\cH$ is defined by 
\begin{equation*}
  D(\phi)=L(\phi)-E(\phi),
\end{equation*}
where
\begin{equation*}
  L(\phi)=-\half\log\int_Xe^{-2\phi}.
\end{equation*}
This functional has proven an extremely useful tool for the study of the existence of 
K\"ahler-Einstein metrics, which are realized as the critical points of $D$, see~\eg~\cite{Berm16,BBJ15}. 
% 
%
%%%%%%%%%%%%%%%%%%%%%%%%%%%%%%%%%%%%%%%%%%%%%%%%%%%%%%%%%%%%%%%%%%%
%
%
%\newpage
\section{Test configurations as non-Archimedean metrics}\label{S104}
In this section we recall some notions and results from~\cite{BHJ1}.  
Let $X$ be a smooth projective complex variety and $L$ a
line bundle on $X$.
%
%%%%%%%%%%%%%%%%%%%%%%%%%%%%%%%%%%%%%%%%%%%%%%%%%%%%%%%%%%%%%%%%%%%
%
\subsection{Test configurations}
As in~\cite{BHJ1} we adopt the following flexible terminology for test configurations.
\begin{defi}\label{defi:test} A test configuration $\cX$ for $X$ consists of the following data: 
\begin{itemize}
\item[(i)] a flat, projective morphism of schemes $\pi\colon\cX\to\C$; 
\item[(ii)] a $\C^*$-action on $\cX$ lifting the canonical action on $\C$;  
\item[(iii)] an isomorphism $\cX_1\simeq X$. 
\end{itemize}
\end{defi}
We denote by $\tau$ the coordinate on $\C$, and by $\cX_\tau$ the fiber over $\tau$. 

These conditions imply that $\cX$ is reduced and irreducible~\cite[Proposition~2.6]{BHJ1}). 
If $\cX,\cX'$ are test configurations for $X$, then there is a unique
$\C^*$-equivariant birational map $\cX'\dashrightarrow\cX$ compatible
with the isomorphism in~(iii). We say that $\cX'$ \emph{dominates}
$\cX$ if this birational map is a morphism; when it is an isomorphism
we somewhat abusively identify $\cX$ and $\cX'$. Any test
configuration $\cX$ is dominated by its \emph{normalization} $\tcX$.

An \emph{snc} test configuration for $X$ is a smooth test configuration $\cX$
whose central fiber $\cX_0$ has simple normal crossing support
(but is not necessarily reduced).

When $\cX$ is a test configuration, we define 
the \emph{logarithmic canonical bundle} as
\begin{equation*}
  K^\lo_\cX:=K_\cX+\cX_{0,\red}. 
\end{equation*}
Setting $K^\lo_{\C}:=K_\C+[0]$, we define the 
\emph{relative logarithmic canonical bundle} as
\begin{equation*}
  K^\lo_{\cX/\C}:=K^\lo_\cX-\pi^*K^\lo_{\C}=K_{\cX/\C}+\cX_{0,\red}-\cX_0;
\end{equation*}
this is well behaved under base change $\tau\mapsto\tau^d$,
see~\cite[\S4.4]{BHJ1}.
Despite the terminology, $K_\cX$, $K_{\cX/\C}$, $K^\lo_\cX$ and $K^\lo_{\cX/\C}$ are
only Weil divisor classes in general; they are line bundles when $\cX$ is smooth.
\begin{defi}
  A test configuration $(\cX,\cL)$ for $(X,L)$ consists of a test
  configuration $\cX$ for $X$, together with the following additional
  data:
  \begin{itemize}
  \item[(iv)] a $\C^*$-linearized $\Q$-line bundle $\cL$ on $\cX$;
  \item[(v)] an isomorphism $(\cX_1,\cL_1)\simeq (X,L)$.
  \end{itemize}
\end{defi}
A \emph{pull-back} of a test configuration $(\cX,\cL)$ is a test configuration $(\cX',\cL')$ where $\cX'$ dominates $\cX$ and $\cL'$ is the pull-back of $\cL$. In particular, the \emph{normalization} $(\tcX,\tcL)$ is the pull-back of $(\cX,\cL)$ with $\nu\colon\tcX\to\cX$ the normalization morphism. 

A test configuration $(\cX,\cL)$ is \emph{trivial} if $\cX=X\times\C$
with $\C^*$ acting trivially on $X$. This implies that $(\cX,\cL+c\cX_0)=(X,L)\times\C$ for some constant $c\in\Q$. A test configuration for $(X,L)$ is \emph{almost trivial} if its
normalization is trivial.
 
We say that  $(\cX,\cL)$ is ample (resp.\ semiample, resp.\ nef) when $\cL$ is 
relatively ample (resp.\ relatively semiample, resp.\ nef). The pullback of a
semiample (resp.\ nef) test configuration is semiample (resp.\ nef). 

If $L$ is ample, then for every 
semiample test configuration $(\cX,\cL)$ there exists a unique
ample test configuration $(\cX_\amp,\cL_\amp)$ that is dominated by
$(\cX,\cL)$ and satisfies $\mu_*\cO_\cX=\cO_{\cX_\amp}$, where 
$\mu\colon\cX\to\cX_\amp$ is the canonical morphism; 
see~\cite[Proposition~2.17]{BHJ1}.

Note that, while $\cX$ can often be chosen smooth, $\cX_\amp$ will
not be smooth, in general. It is, however, normal whenever $\cX$ is. 
%
%%%%%%%%%%%%%%%%%%%%%%%%%%%%%%%%%%%%%%%%%%%%%%%%%%%%%%%%%%%%%%%%%%%
%
\subsection{One-parameter subgroups}\label{S109}
Suppose $L$ is ample. 
Ample test configurations are then essentially equivalent to one-parameter
degenerations of $X$. See~\cite[\S2.3]{BHJ1} for details on what follows.

Fix $m\ge 1$ such that $mL$ is very ample, and consider the
corresponding closed embedding $X\hookrightarrow\P^{N_m-1}$ with
$N_m:=h^0(X,mL)$.  Then every $1$-parameter subgroup ($1$-PS for
short) $\la\colon\C^*\to\GL(N_m,\C)$ induces an ample test configuration
$(\cX_\la,\cL_\la)$ for $(X,L)$. By definition, $\cX_\la$ is the
Zariski closure in $\P V\times\C$ of the image of the closed embedding
$X\times\C^*\hookrightarrow\P V\times\C^*$ mapping $(x,\tau)$ to
$(\la(\tau)x,\tau)$. Note that $(\cX_\la,\cL_\la)$ is trivial iff $\la$ is a multiple of the identity.
We emphasize that $\cX_\la$ is not normal in general.

In fact, every ample test configuration may be obtained as above.
Using one-parameter subgroups, we can produce test configurations that are
almost trivial but not trivial, as observed in~\cite[Remark~5]{LX}. 
See~\cite[Proposition~2.12]{BHJ1} for an elementary proof of the following result.
\begin{prop}\label{prop:almost} 
  For every $m$ divisible enough, there exists a
  $1$-PS $\la\colon\C^*\to\GL(N_m, \C)$ such that the 
  test configuration $(\cX_\la,\cL_\la)$ is nontrivial but almost trivial.
\end{prop}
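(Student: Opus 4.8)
The plan is to produce the required $1$-PS by a \emph{pinching} degeneration, in the spirit of~\cite[Remark~5]{LX}. Recall from the discussion preceding the statement that $(\cX_\la,\cL_\la)$ is trivial exactly when $\la$ is a multiple of the identity, so any non-scalar $\la$ already yields a nontrivial test configuration; the real content is to arrange that the normalization $\tcX_\la$ be trivial. Since $\cX_\la$ is by construction an integral variety containing $X\times\C^*$ as a dense open subset, it suffices to exhibit a finite birational $\C^*$-equivariant morphism $\nu\colon X\times\C\to\cX_\la$ restricting to an isomorphism over $\C^*$ whose induced action on the source is trivial on the $X$-factor: such a $\nu$ is automatically the normalization, whence $\tcX_\la\cong X\times\C$ is the trivial test configuration, while the failure of $\nu$ to be an isomorphism forces $\cX_\la$ to be non-normal, hence nontrivial.

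To build $\la$, fix $m$ divisible enough that $mL$ is very ample, and embed $X\hookrightarrow\P(V)=\P^{N_m-1}$ by $|mL|$. For such $m$ the secant variety of $X$ has the expected dimension $2n+1<N_m-1$, and its generic point lies on a unique secant line of $X$ and on no tangent line of $X$; this is standard for sufficiently positive embeddings, and is exactly where divisibility of $m$ enters. I would choose such a point $e$, whose secant line meets $X$ in two distinct points $p_0\neq p_1$, pick homogeneous coordinates on $\P(V)$ so that $e=[0:1:0:\dots:0]$, and set $\la(\tau):=\mathrm{diag}(1,\tau,1,\dots,1)\in\GL(N_m,\C)$, the $1$-PS scaling only the $e$-coordinate. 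It is non-scalar, so $(\cX_\la,\cL_\la)$ is nontrivial.

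It remains to verify almost triviality by analyzing $\nu(x,\tau):=([\la(\tau)x],\tau)$. Because $e\notin X$ the remaining coordinates do not all vanish on $X$, so $\nu$ extends to a genuine morphism $X\times\C\to\cX_\la$, equal over $\tau=0$ to the linear projection $\bar\pi$ away from $e$, and an isomorphism onto its image over $\C^*$ (since $\la(\tau)$ is then invertible); in particular $\nu$ is birational. As $e$ lies on a unique secant and no tangent line, $\bar\pi$ is finite and birational onto its image $X_0$, obtained from $X$ by gluing $p_0,p_1$ to one point, so the normalization of $X_0$ is $X$. Hence $\nu$ is proper and quasi-finite, thus finite; being finite and birational from the smooth variety $X\times\C$, it is the normalization of the integral variety $\cX_\la$, and $\tcX_\la\cong X\times\C$. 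The $\C^*$-action $s\cdot([v],\tau)=([\la(s)v],s\tau)$ satisfies $s\cdot\nu(x,\tau)=([\la(s\tau)x],s\tau)=\nu(x,s\tau)$, so it pulls back under $\nu$ to $s\cdot(x,\tau)=(x,s\tau)$; the action on the $X$-factor is trivial, and $\tcX_\la$ is the trivial test configuration. Finally, $\nu$ is two-to-one over the image of $p_0,p_1$, so it is not an isomorphism and $\cX_\la$ is non-normal. Thus $(\cX_\la,\cL_\la)$ is nontrivial but almost trivial.

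The main obstacle is the geometric input of the second step: one must guarantee, for \emph{every} sufficiently divisible $m$, a point $e\notin X$ lying on a secant line of $X$ but off its tangential variety, with projection from $e$ finite and birational onto a variety whose normalization is $X$ — in other words, that the degeneration honestly pinches $X$ rather than collapsing it or producing a worse singularity. The remaining ingredients, namely properness and quasi-finiteness of $\nu$, its identification with the normalization, and the triviality of the limiting $\C^*$-action, are then formal.
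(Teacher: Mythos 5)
Your construction is correct and is essentially the intended one: the paper gives no proof here but defers to \cite[Proposition~2.12]{BHJ1}, whose argument is precisely the secant--projection degeneration of \cite[Remark~5]{LX} that you carry out. The one substantive comment concerns the ``main obstacle'' you flag at the end: it is not an obstacle, because none of the refined conditions on $e$ (unique secant, avoidance of the tangential variety, identification of $X_0$ as $X$ with exactly two points glued) is actually used in your argument. All that is needed is a point $e\in\overline{p_0p_1}\setminus X$ with $p_0\ne p_1\in X$. Indeed: (a) $\nu$ is a morphism because $e\notin X$; (b) $\nu$ is quasi-finite because every line through $e$ meets $X$ in a finite set (a line contained in $X$ would contain $e$), hence finite by properness; (c) $\nu$ is birational, being an isomorphism over $\C^*$; and (d) $X\times\C$ is normal. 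Already (a)--(d) identify $\nu$ with the normalization of $\cX_\la$, and $\nu(p_0,0)=\nu(p_1,0)$ shows $\nu$ is not injective, so $\cX_\la$ is non-normal and in particular nontrivial --- no knowledge of the precise structure of the central fiber or of the other secants through $e$ is required. Such a point $e$ exists for every $m\ge 2$ with $mL$ very ample, since then $X$ contains no lines ($mL\cdot C\ge 2$ for every curve $C\subset X$), so any line through two distinct points of $X$ meets $X$ in a finite set and a general point of it lies off $X$. Finally, your direct non-normality argument for nontriviality is the one to rely on; the quoted criterion ``trivial iff $\la$ is a multiple of the identity'' is only needed, and only safe, in the direction you do not use (scalar implies trivial), since a non-scalar $\la$ whose ``center'' $e$ avoids the whole secant variety degenerates $X$ onto an isomorphic, merely linearly degenerate, copy of itself.
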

%
%%%%%%%%%%%%%%%%%%%%%%%%%%%%%%%%%%%%%%%%%%%%%%%%%%%%%%%%%%%%%%%%%%%
%
\subsection{Valuations and log discrepancies}\label{S201}
By a \emph{valuation on $X$} we mean a real-valued valuation $v$ on the function field $\C(X)$ (trivial on the ground field $\C$). The \emph{trivial valuation} $v_\triv$ is 
defined by $v_\triv(f)=0$ for $f\in\C(X)^*$. A valuation $v$ is
\emph{divisorial} if it is of the form $v=c\ord_F$, where
$c\in\Q_{>0}$ and $F$ is a prime divisor on a projective normal
variety $Y$ admitting a birational morphism onto $X$.
We denote by $\Xdiv$ the set of valuations on $X$ that are either 
divisorial or trivial, and equip it with the weakest topology such that
$v\mapsto v(f)$ is continuous for every $f\in\C(X)^*$. 

The \emph{log discrepancy} $A_X(v)$ of a valuation in $\Xdiv$ is
defined as follows. First, $A_X(v_\triv)=0$. 
For $v=c\ord_F$ a divisorial valuation as above, we set $A_X=c(1+\ord_F(K_{Y/X}))$, 
where $K_{Y/X}$ is the relative canonical (Weil) divisor. 

Now consider a normal test configuration $\cX$ of $X$. Since 
$\C(\cX)\simeq\C(X)(\tau)$, any valuation $w$ on $\cX$
restricts to a valuation $r(w)$ on $X$.
Let $E$ be an
irreducible component of the central fiber $\cX_0=\sum b_EE$.
Then $\ord_E$ is a $\C^*$-invariant divisorial valuation on $\C(\cX)$
and satisfies $\ord_E(t)=b_E$. If we set 
$v_E:=r(b_E^{-1}\ord_E)$,
then $v_E$ is a valuation in $\Xdiv$. Conversely, every valuation
$v\in\Xdiv$ has a unique $\C^*$-invariant preimage $w$ under $r$
normalized by $w(\tau)=1$, and $w$ is associated to an irreducible
component of the central fiber of some test configuration for $X$, cf.~\cite[Theorem 4.6]{BHJ1}. 

Note that $\ord_E$ is a divisorial valuation on $X\times\C$. 
By~\cite[Proposition 4.11]{BHJ1}, the log discrepancies of $\ord_E$ and $v_E$ are related as follows:
$A_{X\times\C}(\ord_E)=b_E(1+A_X(v_E))$.
%
%%%%%%%%%%%%%%%%%%%%%%%%%%%%%%%%%%%%%%%%%%%%%%%%%%%%%%%%%%%%%%%%%%%
%
\subsection{Compactifications}
For some purposes it is convenient to compactify test configurations. The following notion
provides a canonical way of doing so.
\begin{defi}\label{defi:comp} 
  The \emph{compactification} $\bar\cX$ of a test configuration $\cX$ for $X$ is defined by 
  gluing together $\cX$ and $X\times(\P^1\setminus\{0\})$ along their respective open subsets 
  $\cX\setminus\cX_0$ and $X\times(\C\setminus\{0\})$, using the canonical $\C^*$-equivariant 
  isomorphism $\cX\setminus\cX_0\simeq X\times(\C\setminus\{0\})$. 
\end{defi}
The compactification $\bar\cX$ comes with a $\C^*$-equivariant flat morphism $\bar\cX\to\P^1$, 
still denoted by $\pi$. By construction, $\pi^{-1}(\P^1\setminus\{0\})$ is $\C^*$-equivariantly 
isomorphic to $X\times(\P^1\setminus\{0\})$ over $\P^1\setminus\{0\}$. 

Similarly, a test configuration $(\cX,\cL)$ for $(X,L)$ admits a compactification $(\bar\cX,\bar\cL)$, where $\bar\cL$ is a $\C^*$-linearized $\Q$-line bundle on $\bar\cX$. Note that $\bar\cL$ is relatively (semi)ample iff $\cL$ is. 

The relative canonical differential and relative canonical differential are now defined by 
\begin{equation*}
  K_{\bar\cX/\P^1}:=K_{\bar\cX}-\pi^*K_{\P^1}
\end{equation*}
\begin{equation*}
  K^\lo_{\bar\cX/\P^1}:=K^\lo_{\bar\cX}-\pi^*K^\lo_{\P^1}=K_{\bar\cX/\P^1}+\cX_{0,\red}-\cX_0.
\end{equation*}
%
%%%%%%%%%%%%%%%%%%%%%%%%%%%%%%%%%%%%%%%%%%%%%%%%%%%%%%%%%%%%%%%%%%%
%
\subsection{Non-Archimedean metrics}\label{S102}
Following~\cite[\S 6]{BHJ1} (see also~\cite{trivval}) we introduce:
\begin{defi}\label{defi:equiv} 
  Two test configurations $(\cX_1,\cL_1)$, $(\cX_2,\cL_2)$ for $(X,L)$ are \emph{equivalent} if there exists a test configuration $(\cX_3,\cL_3)$ that is a pull-back of both $(\cX_1,\cL_1)$ and $(\cX_2,\cL_2)$. 
An equivalence class is called a \emph{non-Archimedean metric} on $L$, and is denoted by $\phi$. We denote by $\phi_\triv$ the equivalence class of the trivial test configuration $(X,L)\times\C$.\end{defi} 
A non-Archimedean metric $\phi$ is called \emph{semipositive} if some
(or, equivalently, any) representative $(\cX,\cL)$ of $\phi$ is
nef. Note that this implies that $L$ is nef.

When $L$ is ample, we say that a non-Archimedean metric $\phi$ on $L$
is \emph{positive} if some (or, equivalently, any) representative 
$(\cX,\cL)$ of $\phi$ is semiample. 
We denote by $\cH^{\NA}$ the set of all non-Archimedean positive
metrics on $L$. 
By~\cite[Lemma~6.3]{BHJ1}, every $\phi\in\cH^{\NA}$ is represented 
by a unique normal, ample test configuration. 

\smallskip
The set of non-Archimedean metrics on a line bundle $L$ 
admits two natural operations: 
\begin{itemize}
\item[(i)] a \emph{translation action} of $\Q$, denoted by $\phi\mapsto\phi+c$, and induced by $(\cX,\cL)\mapsto (\cX,\cL+c\cX_0)$;
\item[(ii)] a \emph{scaling action} of the semigroup $\N^*$ of positive integers, denoted by $\phi\mapsto \phi_d$ and induced by the base change of $(\cX,\cL)$ by $\tau\mapsto\tau^d$. 
\end{itemize}
When $L$ is ample (resp.\ nef) these operations preserve the set of 
positive (resp.\ semipositive) metrics. 
The trivial metric $\phi_\triv$ is fixed by the scaling action.

As in~\S\ref{S107} we use additive notation for non-Archimedean
metrics. A non-Archimedean metric on $\cO_X$ induces a bounded (and 
continuous) function on $\Xdiv$.
\begin{rmk}
  As explained in~\cite[\S6.8]{BHJ1}, a non-Archimedean metric $\phi$ on $L$, 
  as defined above, can be
  viewed as a metric on the Berkovich analytification~\cite{BerkBook} of $L$ with
  respect to the trivial absolute value on the ground field $\C$. 
  See also~\cite{trivval} for a more systematic analysis, itself
  building upon~\cite{siminag,nama}.
\end{rmk}
%
%%%%%%%%%%%%%%%%%%%%%%%%%%%%%%%%%%%%%%%%%%%%%%%%%%%%%%%%%%%%%%%%%%%
%
\subsection{Intersection numbers and Monge-Amp\`ere measures}
Following~\cite[\S6.6]{BHJ1} we define the intersection number $(\phi_0\cdot\ldots\cdot\phi_n)$
of non-Archimedean metrics $\phi_0,\dots,\phi_n$
on line bundles $L_0,\dots,L_n$ on $X$ as follows. 
Pick representatives $(\cX,\cL_i)$ of $\phi_i$, $0\le i\le n$, with
the same test configuration $\cX$ for $X$ and set 
\begin{equation*}
  (\phi_0\cdot\ldots\cdot\phi_n):=(\bar\cL_0\cdot\ldots\cdot\bar\cL_n),
\end{equation*}
where $(\bar\cX,\bar\cL_i)$ is the compactification of $(\cX,\cL_i)$.
It follows from the projection formula that this does not depend of the choice of the $\cL_i$.
Note that $(\phi_\triv^{n+1})=0$.
When $L_0=\cO_X$, so that $\cL_0=\cO_X(D)$ for a $\Q$-Cartier
$\Q$-divisor $D=\sum r_EE$ supported on $\cX_0$, we can compute the intersection
number as 
$(\phi_0\cdot\ldots\cdot\phi_n)=\sum_Er_E(\cL_1|_E\cdot\ldots\cdot\cL_n|_E)$.

To a non-Archimedean metric $\phi$ on a big and nef line bundle $L$ on $X$ 
we associate, as in~\cite[\S6.7]{BHJ1}, 
a signed finite atomic \emph{Monge-Amp\`ere measure} 
on $\Xdiv$. Pick a representative $(\cX,\cL_i)$ of $\phi$, and set 
\begin{equation*}
  \MA^\NA(\phi)=V^{-1}\sum_Eb_E(\cL|_E^n)\delta_{v_E},
\end{equation*}
where $E$ ranges over irreducible components of $\cX_0=\sum_Eb_EE$,
$v_E=r(b_E^{-1}\ord_E)\in\Xdiv$, and $V=(L^n)$.
When the $\phi_i$ are semipositive, the mixed Monge-Amp\`ere measure is
a probability measure.
% 
%%%%%%%%%%%%%%%%%%%%%%%%%%%%%%%%%%%%%%%%%%%%%%%%%%%%%%%%%%%%%%%%%%%
%
\subsection{Functionals on non-Archimedean metrics}\label{S111}
Following~\cite[\S 7]{BHJ1} we define non-Archi\-me\-dean analogues of the
functionals considered in~\S\ref{S101}.
Fix a line bundle $L$.
\begin{defi}\label{D201}
  Let $W$ be a set of non-Archimedean metrics on $L$ that is closed
  under translation and scaling.
  A functional $F\colon W\to\R$ is
  \begin{itemize}
  \item[(i)]
    \emph{homogeneous} if $F(\phi_d)=d F(\phi)$ for $\phi\in W$and $d\in\N^*$;
  \item[(ii)]
    \emph{translation invariant} if $F(\phi+c)=F(\phi)$ for $\phi\in W$ and $c\in\Q$. 
  \end{itemize}
\end{defi}
When $L$ is ample, a functional $F$ on $\cH^{\NA}$ may be viewed
as a function $F(\cX,\cL)$ on the set of all semiample test
configurations $(\cX,\cL)$ that is invariant under pull-back,
\ie $F(\cX',\cL')=F(\cX,\cL)$ whenever $(\cX',\cL')$ is a pull-back of
a $(\cX,\cL)$ (and, in particular, invariant under
normalization). Homogeneity amounts to $F(\cX_d,\cL_d)=d\,F(\cX,\cL)$ for all $d\in\N^*$, and translation invariance to $F(\cX,\cL)=F(\cX,\cL+c\cX_0)$ for all $c\in\Q$. 

\smallskip
For each non-Archimedean metric $\phi$ on $L$, choose a normal representative
$(\cX,\cL)$ that dominates $X\times\C$ via $\rho\colon\cX\to X\times\C$. Then
$\cL=\rho^*(L\times\C)+D$ for a uniquely determined $\Q$-Cartier divisor $D$
supported on $\cX_0$. Write $\cX_0=\sum_E b_E E$ 
and let $(\bar\cX,\bar\cL)$ be the compactification of $(\cX,\cL)$.

In this notation, we may describe our list of non-Archimedean
functionals as follows. 
Assume $L$ is big and nef.
Let $\phi_\triv$ and $\psi_\triv$ be the trivial metrics on $L$ and
$K_X$, respectively.
\begin{itemize}
\item[(i)] 
  The \emph{non-Archimedean Monge-Amp\`ere energy} of $\phi$ is
  \begin{align*}
    E^{\NA}(\phi)
    :&=\frac{(\phi^{n+1})}{(n+1)V}\\
    &=\frac{\left(\bar\cL^{n+1}\right)}{(n+1)V}. 
  \end{align*}
\item[(ii)] The \emph{non-Archimedean} \emph{$I$-functional} and 
  \emph{$J$-functional} are given by 
  \begin{align*}
    I^{\NA}(\phi) 
    :&=V^{-1}(\phi\cdot\phi_\triv^n)-V^{-1}((\phi-\phi_\triv)\cdot\phi^n)\\
     &=V^{-1}(\bar\cL\cdot(\rho^*(L\times\P^1)^n)-V^{-1}(D\cdot\bar\cL^n).
  \end{align*}
  and
  \begin{align*}
    J^{\NA}(\phi) 
    :&=V^{-1}(\phi\cdot\phi_\triv^n)-E^{\NA}(\phi)\\
     &=\frac1V(\bar\cL\cdot(\rho^*(L\times\P^1)^n)-\frac1{(n+1)V}(\bar\cL^{n+1}).
  \end{align*}
\item[(iii)] The \emph{non-Archimedean Ricci energy} is
\begin{align*}
  R^{\NA}(\phi)
  :&=V^{-1}(\psi_\triv\cdot\phi^n)\\
   &=V^{-1}\left(\rho^*K^\lo_{X\times\P^1/\P^1}\cdot\bar\cL^n\right).
\end{align*}
\item[(iv)] The \emph{non-Archimedean entropy} is
  \begin{align*}
    H^{\NA}(\phi)
    :&=\int_{\Xdiv}A_X(v)\MA^{\NA}(\phi)\\
    &=V^{-1}\left(K^\lo_{\bar\cX/\P^1}\cdot\bar\cL^n\right)
      -V^{-1}\left(\rho^*K^\lo_{X\times\P^1/\P^1}\cdot\bar\cL^n\right).
    % &=V^{-1}\left(K_{\bar\cX/X\times\P^1}\cdot\bar\cL^n\right)
    %   -V^{-1}\left((\cX_0-\cX_{0,\red})\cdot\bar\cL^n\right). 
 \end{align*}
\item[(v)] 
  The \emph{non-Archimedean Mabuchi functional} (or K-energy) is
  \begin{align*}
    M^{\NA}(\phi)
    :&=H^{\NA}(\phi)+R^{\NA}(\phi)+\bar S E^{\NA}(\phi)\\
     &=V^{-1}\left(K^\lo_{\bar\cX/\P^1}\cdot\bar\cL^n\right)
       +\frac{\bar S}{(n+1)V}\left(\bar\cL^{n+1}\right). 
  \end{align*}
\end{itemize}
Note the resemblance to the formulas in~\S\ref{S108}.
All of these functionals are homogeneous. 
They are also translation invariant, except for $E^{\NA}$ and $R^\NA$, which satisfy
\begin{equation}\label{equ:transE}
  E^{\NA}(\phi+c)=E^{\NA}(\phi)+c 
  \quad\text{and}\quad
  R^{\NA}(\phi+c)=R^{\NA}(\phi)-\bar S c 
\end{equation}
for all $\phi\in\cH^{\NA}$ and $c\in\Q$. 

The functionals $I^{\NA}$, $J^\NA$ and $I^\NA-J^\NA$ are comparable
on semipositive metrics in the same way as~\eqref{e106}.
By~\cite[Lemma 7.7, Theorem~5.16]{BHJ1}, when $\phi$ is positive, the first term in the definition of $J^{\NA}$  satisfies
\begin{equation*}
  V^{-1}(\phi\cdot\phi_\triv^n) 
  =(\phi-\phi_\triv)(v_\triv)
  =\max_{\Xdiv}(\phi-\phi_\triv) 
  =\max_Eb_E^{-1}\ord_E(D). 
\end{equation*}

Further, $J^{\NA}(\phi)\ge 0$, with equality iff
$\phi=\phi_\triv+c$ for some $c\in\Q$, and $J^\NA$ is comparable to
both a natural $L^1$-norm and the minimum norm in the sense of 
Dervan~\cite{Der1}, see~\cite[Theorem~7.9, Remark~7.12]{BHJ1}.
For a normal ample test configuration $(\cX, \cL)$ representing $\phi \in \cH^{\NA}$ we also
denote the J-norm by $J^{\NA}(\cX, \cL)$. 
%
%%%%%%%%%%%%%%%%%%%%%%%%%%%%%%%%%%%%%%%%%%%%%%%%%%%%%%%%%%%%%%%%%%%
%
\subsection{The Donaldson-Futaki invariant}
As explained in~\cite{BHJ1}, the non-Archimedean Mabuchi
functional is closely related to the Donaldson-Futaki invariant. 
We have
\begin{prop} 
  Assume $L$ is ample. Let $\phi\in\cH^\NA$ be the class of an ample
  test configuration $(\cX,\cL)$ for $(X,L)$, and denote by $(\tcX,\tcL)$ its
  normalization,  which is thus the unique normal, ample representative of $\phi$. Then 
  \begin{align}
    M^{\NA}(\phi)&=\DF(\tcX,\tcL)-V^{-1}\left((\tcX_0-\tcX_{0,\red})\cdot\tcL^n\right)\label{e104}\\
    \DF(\cX,\cL)&=\DF(\tcX,\tcL)+2 V^{-1}\sum_E m_E\left(E\cdot\cL^n\right),\label{e105}
  \end{align}
  where $E$ ranges over the irreducible components of $\cX_0$ 
  contained in the singular locus of $\cX$ and $m_E\in\N^*$ 
  is the length of $\left(\nu_*\cO_{\tcX}\right)/\cO_{\cX}$ 
  at the generic point of $E$, with $\nu\colon\tcX\to\cX$ the normalization. 
  
  In particular, $\DF(\cX,\cL)\ge M^{\NA}(\phi)$, and equality holds 
  iff $\cX$ is regular in codimension one and $\cX_0$ is generically reduced. 
\end{prop}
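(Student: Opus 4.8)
The two identities are of different nature, so I would establish them separately and then combine. I begin with \eqref{e104}, which is essentially bookkeeping. For the \emph{normal} test configuration $(\tcX,\tcL)$ the Donaldson--Futaki invariant is given by the intersection formula
\[
\DF(\tcX,\tcL)=\frac{\bar S}{(n+1)V}\bigl(\bar{\tcL}^{\,n+1}\bigr)+\frac1V\bigl(K_{\bar{\tcX}/\P^1}\cdot\bar{\tcL}^{\,n}\bigr),
\]
which makes sense because $K_{\bar{\tcX}/\P^1}$ is a well-defined Weil divisor class on the normal variety $\bar{\tcX}$ and may be paired with the $\Q$-Cartier class $\bar{\tcL}$. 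Comparing with the definition of $M^\NA(\phi)$ in \S\ref{S111} and using $K^\lo_{\bar{\tcX}/\P^1}=K_{\bar{\tcX}/\P^1}+\tcX_{0,\red}-\tcX_0$, the two expressions differ only in their canonical term, and subtracting yields $M^\NA(\phi)-\DF(\tcX,\tcL)=V^{-1}\bigl((\tcX_{0,\red}-\tcX_0)\cdot\tcL^{\,n}\bigr)$, which is exactly \eqref{e104}.

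For \eqref{e105} the subtlety is that $\DF(\cX,\cL)$ refers to a possibly non-normal test configuration, for which the intersection formula above is unavailable; instead $\DF$ is read off from the asymptotic weight expansion, and the plan is to compare this weight for $\cX$ and for $\tcX$. Writing $N_k:=h^0(X,kL)=a_0k^n+a_1k^{n-1}+\dots$ and letting $w_k$ be the total weight of the $\C^*$-action on $H^0(\cX_0,k\cL_0)$, with $w_k=b_0k^{n+1}+b_1k^n+\dots$, the invariant takes the form $\DF(\cX,\cL)=2(a_1b_0-a_0b_1)/a_0^2$; the factor $2$ is precisely what reconciles this normalization with the intersection formula above, and it is the source of the factor $2$ in \eqref{e105}. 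Two reductions isolate the relevant coefficient: the polynomial $N_k$ depends only on $(X,L)$, so $a_0,a_1$ are unchanged under normalization; and since $\tcL=\nu^*\cL$ with $\nu$ finite birational, the projection formula gives $(\bar{\tcL}^{\,n+1})=(\bar\cL^{n+1})$, so $b_0$ is unchanged as well. Hence $\DF(\cX,\cL)-\DF(\tcX,\tcL)=\tfrac{2}{a_0}\bigl(b_1(\tcX)-b_1(\cX)\bigr)$, and everything reduces to the difference of the $k^n$-coefficients of the two weights.

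To compute that difference I would compare the modules of sections over $\C=\Spec\C[\tau]$. For $k\gg0$ the $\C[\tau]$-modules $M:=H^0(\cX,k\cL)$ and $M':=H^0(\tcX,k\tcL)=H^0(\cX,k\cL\otimes\nu_*\cO_{\tcX})$ are free of rank $N_k$, carry compatible $\C^*$-actions, and fit into a $\C^*$-equivariant exact sequence
\[
0\to M\to M'\to \mathcal{Q}_k\to 0,\qquad \mathcal{Q}_k:=H^0\bigl(\cX,(\nu_*\cO_{\tcX}/\cO_\cX)\otimes k\cL\bigr),
\]
obtained by twisting $0\to\cO_\cX\to\nu_*\cO_{\tcX}\to\nu_*\cO_{\tcX}/\cO_\cX\to0$ by $k\cL$ and using $H^1(\cX,k\cL)=0$; here $\mathcal{Q}_k$ is a finite-dimensional torsion module supported at $\tau=0$. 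Choosing a $\C^*$-homogeneous Smith normal form $M'=\bigoplus_i\C[\tau]e_i$, $M=\bigoplus_i\C[\tau]\tau^{c_i}e_i$, a direct computation shows that the weights of $M/\tau M$ and $M'/\tau M'$ differ by exactly $\mathrm{wt}(\tau)\sum_ic_i=\mathrm{wt}(\tau)\dim_\C\mathcal{Q}_k$, the individual basis weights $e_i$ cancelling. Since $\nu_*\cO_{\tcX}/\cO_\cX$ has generic length $m_E$ along each component $E$ of $\cX_0$ lying in the singular locus and is otherwise supported in codimension $\ge2$, for $k\gg0$ we get
\[
\dim_\C\mathcal{Q}_k=\chi\bigl(\cX,(\nu_*\cO_{\tcX}/\cO_\cX)\otimes k\cL\bigr)=\sum_E m_E\,\frac{(\cL|_E^{\,n})}{n!}\,k^n+O(k^{n-1}),
\]
the codimension-$\ge2$ part contributing only at lower order. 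Reading off the $k^n$-coefficient and substituting into the previous reduction — with the sign convention of \cite{BHJ1} fixing $\mathrm{wt}(\tau)=-1$ — produces exactly $2V^{-1}\sum_E m_E(E\cdot\cL^n)$, which is \eqref{e105}.

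Finally, adding \eqref{e104} and \eqref{e105} gives
\[
\DF(\cX,\cL)-M^\NA(\phi)=\frac1V\bigl((\tcX_0-\tcX_{0,\red})\cdot\tcL^n\bigr)+\frac2V\sum_E m_E\,(E\cdot\cL^n).
\]
Both terms are non-negative: $\tcX_0-\tcX_{0,\red}=\sum_E(b_E-1)E$ is effective and $\tcL$ is nef, so the first term is $\ge0$ and vanishes iff every $b_E=1$, i.e.\ iff $\cX_0$ is generically reduced; in the second, $m_E\ge1$ and $(\cL|_E^n)\ge0$ by ampleness, so it vanishes iff $\cX_0$ has no component in the singular locus, i.e.\ iff $\cX$ is regular in codimension one. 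This gives $\DF(\cX,\cL)\ge M^\NA(\phi)$ together with the stated equality criterion. The main obstacle I anticipate is the weight comparison of the third paragraph: executing the Smith-normal-form argument $\C^*$-equivariantly so that the basis weights cancel and only $\dim_\C\mathcal{Q}_k$ survives, and, just as delicately, tracking the sign $\mathrm{wt}(\tau)$ and the factor $2$ so that the right-hand side of \eqref{e105} emerges with the correct sign and constant. By comparison, \eqref{e104} and the concluding positivity argument are routine.
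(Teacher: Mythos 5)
Your argument is correct. Note that the paper itself gives no proof here: it simply refers both identities back to the prequel ([BHJ1, \S 7.3] for~\eqref{e104} and [BHJ1, Proposition~3.15] for~\eqref{e105}), so what you have written is a self-contained reconstruction of the cited arguments. Your derivation of~\eqref{e104} is exactly the bookkeeping $K^\lo_{\bar\cX/\P^1}=K_{\bar\cX/\P^1}+\cX_{0,\red}-\cX_0$ applied to the intersection formulas for $M^\NA$ and $\DF$ (granting, as the paper does, the Wang--Odaka intersection-theoretic expression for $\DF$ of a normal test configuration), and your proof of~\eqref{e105} --- twisting $0\to\cO_\cX\to\nu_*\cO_{\tcX}\to\nu_*\cO_{\tcX}/\cO_\cX\to 0$ by $k\cL$, comparing the graded weights of $M/\tau M$ and $M'/\tau M'$ via an equivariant Smith normal form, and extracting the $k^n$-coefficient of $\dim_\C\cQ_k$ by asymptotic Riemann--Roch on the divisorial part of the non-normal locus --- is the same weight-comparison mechanism as in the cited proposition, including the correct identification $a_0=V/n!$ that produces the factor $2V^{-1}\sum_E m_E(E\cdot\cL^n)$. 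Two small points of hygiene: for divisorial components, ``contained in the singular locus'' coincides with ``contained in the non-normal locus'' (a one-dimensional local domain is normal iff regular), which is why $m_E\ge 1$ for every $E$ in the sum; and in the final equality criterion you need the \emph{strict} inequality $(\cL|_E^n)>0$ (which ampleness of $\cL|_E$ on the $n$-dimensional projective variety $E$ gives), not merely $(\cL|_E^n)\ge 0$ as written, in order to conclude that the second correction term vanishes only when there are no such $E$; similarly the first term vanishes iff $\tcX_0$ is reduced, which matches the stated condition on $\cX_0$ once $\nu$ is an isomorphism in codimension one.
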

Indeed,~\eqref{e104} and~\eqref{e105} follow from the discussion in~\cite[\S7.3]{BHJ1} and 
from~\cite[Proposition~3.15]{BHJ1}, respectively.
Note that intersection theoretic formulas for the Donaldson-Futaki invariant appeared 
already in~\cite{Wan12} and~\cite{Oda13}.

For a general non-Archimedean metric $\phi$ on $L$ we can define
\begin{align*}
  \DF(\phi)
  &=M^{\NA}(\phi)+V^{-1}\left((\cX_0-\cX_{0,\red})\cdot\bar\cL^n\right)\\
  &=V^{-1}\left(K_{\bar\cX/\P^1}\cdot\bar\cL^n\right) 
    +\frac{\bar S}{(n+1)V}\left(\bar\cL^{n+1}\right)
\end{align*}
for any normal representative $(\cX,\cL)$ of $\phi$.
Clearly $M^\NA(\phi)\le\DF(\phi)$ when $\phi$ is semipositive.
% 
%%%%%%%%%%%%%%%%%%%%%%%%%%%%%%%%%%%%%%%%%%%%%%%%%%%%%%%%%%%%%%%%%%%
%
\subsection{The non-Archimedean Ding functional~\cite[\S7.7]{BHJ1}}
Suppose $X$ is weakly Fano, that is, $L:=-K_X$ is big and nef. In this case, we
define the \emph{non-Archimedean Ding functional} on the space of
non-Archimedean metrics on $L$ by 
\begin{equation*}
  D^\NA(\phi)=L^\NA(\phi)-E^\NA(\phi),
\end{equation*}
where $L^{\NA}$ is defined by
\begin{equation*}
  L^\NA(\phi)
  =\inf_v\left(A_X(v)+(\phi-\phi_{\triv})(v)\right),
\end{equation*}
the infimum taken over all valuations $v$ on $X$ that are divisorial
or trivial.
Recall from~\S\ref{S102} that $\phi-\phi_\triv$ is a non-Archimedean 
metric on $\cO_X$ and induces a bounded function on divisorial
valuations.
Note that $L^\NA(\phi+c)=L^\NA(\phi)+c$;
hence $D^\NA$ is translation invariant.

We always have $D^\NA\le J^\NA$, see~\cite[Proposition~7.28]{BHJ1}. When 
$\phi$ is semipositive, we have $D^\NA(\phi)\le M^\NA(\phi)$, 
see~\cite[Proposition~7.32]{BHJ1}.
% 
%%%%%%%%%%%%%%%%%%%%%%%%%%%%%%%%%%%%%%%%%%%%%%%%%%%%%%%%%%%%%%%%%%%
%
\subsection{Uniform K-stability}
As in~\cite[\S8]{BHJ1} we make the following definition.
\begin{defi}\label{D202}
  A polarized complex manifold $(X,L)$ is \emph{uniformly K-stable} 
  if there exists a constant $\delta>0$ such that the following 
  equivalent conditions hold. 
  \begin{itemize}
  \item[(i)]  
    $M^{\NA}(\phi)\ge\delta J^{\NA}(\phi)$ for every $\phi\in\cH^{\NA}(L)$;
  \item[(ii)]
    $\DF(\phi)\ge\delta J^{\NA}(\phi)$ for every $\phi\in\cH^{\NA}(L)$; 
  \item[(iii)]
    $\DF(\cX, \cL) \geq \delta J^{\NA}(\cX, \cL)$ for any normal 
    ample test configuration $(\cX, \cL)$. 
    \end{itemize} 
\end{defi} 
Here the equivalence between~(ii) and~(iii) is definitional, 
and~(i)$\implies$(ii) follows immediately from $\DF\le M^\NA$.
The implication~(ii)$\implies$(i) follows from the homogeneity of 
$M^\NA$ together with the fact that $\DF(\phi_d)=M^\NA(\phi_d)$
for $d$ sufficiently divisible. 
See~\cite[Proposition~8.2]{BHJ1} for details.

The fact that $J^\NA(\phi)=0$ iff $\phi=\phi_\triv+c$ implies that 
uniform K-stability is stronger than K-stability as introduced by
\cite{Tian97,Don2}.
Our notion of uniform K-stability is equivalent to uniform K-stability defined either with respect to the $L^1$-norm or the minimum norm in the 
sense of~\cite{Der1}, see~\cite[Remark~8.3]{BHJ1}. 

\smallskip
In the Fano case, uniform K-stability is further equivalent to 
\emph{uniform Ding stability}:
\begin{thm}
  Assume $L:=-K_X$ is ample and fix a number $\delta$ with
  $0\le\delta\le 1$. Then the following conditions are equivalent:
  \begin{itemize}
  \item[(i)]  
    $M^{\NA}\ge\delta J^{\NA}$ on $\cH^\NA$;
  \item[(ii)]
    $D^{\NA}\ge\delta J^{\NA}$ on $\cH^\NA$.
  \end{itemize}
\end{thm}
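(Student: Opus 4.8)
The implication (ii)$\implies$(i) is immediate, since every $\phi\in\cH^{\NA}$ is positive, hence semipositive, so the inequality $D^{\NA}\le M^{\NA}$ on semipositive metrics recorded above gives $M^{\NA}(\phi)\ge D^{\NA}(\phi)\ge\delta J^{\NA}(\phi)$. For the converse, the starting point of my plan is an exact formula for the gap $M^{\NA}-D^{\NA}$. Since $L=-K_X$ we have $\bar S=n$ and $\psi_{\triv}=-\phi_{\triv}$, from which a direct computation gives $R^{\NA}(\phi)+(n+1)E^{\NA}(\phi)=V^{-1}((\phi-\phi_{\triv})\cdot\phi^n)=\int_{\Xdiv}(\phi-\phi_{\triv})\,\MA^{\NA}(\phi)$. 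Combining this with the definitions of $M^{\NA}$, $D^{\NA}$ and $H^{\NA}=\int_{\Xdiv}A_X\,\MA^{\NA}(\phi)$ yields
\[
M^{\NA}(\phi)-D^{\NA}(\phi)=\int_{\Xdiv}\bigl(A_X(v)+(\phi-\phi_{\triv})(v)\bigr)\,\MA^{\NA}(\phi)-L^{\NA}(\phi).
\]
As $\MA^{\NA}(\phi)$ is a probability measure and $L^{\NA}(\phi)=\inf_v(A_X(v)+(\phi-\phi_{\triv})(v))$, this re-proves $M^{\NA}\ge D^{\NA}$ and, more importantly, identifies the gap as the defect between the $\MA^{\NA}(\phi)$-average and the infimum of the function $g_\phi(v):=A_X(v)+(\phi-\phi_{\triv})(v)$.

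Writing $g_\phi(v_{\triv})=\max_{\Xdiv}(\phi-\phi_{\triv})=V^{-1}(\phi\cdot\phi_{\triv}^n)$, the identity above (together with $D^{\NA}=L^{\NA}-E^{\NA}$) collapses the two functionals to $M^{\NA}(\phi)=\int_{\Xdiv}g_\phi\,\MA^{\NA}(\phi)-E^{\NA}(\phi)$ and $D^{\NA}(\phi)=\inf_{\Xdiv}g_\phi-E^{\NA}(\phi)$, while $J^{\NA}(\phi)=g_\phi(v_{\triv})-E^{\NA}(\phi)$. Conditions (i) and (ii), for a fixed $\phi$, thus read
\[
\int_{\Xdiv}g_\phi\,\MA^{\NA}(\phi)\ \ge\ \delta\,g_\phi(v_{\triv})+(1-\delta)E^{\NA}(\phi)\qquad\text{resp.}\qquad\inf_{\Xdiv}g_\phi\ \ge\ \delta\,g_\phi(v_{\triv})+(1-\delta)E^{\NA}(\phi).
\]
Since $\inf_{\Xdiv}g_\phi\le\int_{\Xdiv}g_\phi\,\MA^{\NA}(\phi)$, condition (ii) is formally stronger than (i) for each individual $\phi$, and the whole content of the theorem is that their validity for all $\phi$ coincides. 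My plan is therefore to argue by contradiction: if (ii) fails for some $\phi$, I will produce a metric $\phi'$ for which (i) fails, by replacing $\phi$ with a degeneration whose Monge--Amp\`ere measure concentrates at a valuation (nearly) minimizing $g_\phi$, so that the left-hand average drops to the infimum.

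Concretely, let $v_0$ be a divisorial valuation with $g_\phi(v_0)$ close to $L^{\NA}(\phi)$, and let $\phi'$ be a normal ample test configuration whose central fiber is irreducible and generically reduced with associated valuation $v_{E}=v_0$, i.e.\ a \emph{special} degeneration extracting $v_0$. For such $\phi'$ the measure $\MA^{\NA}(\phi')$ is a single Dirac mass, so the defect in the displayed identity vanishes and $M^{\NA}(\phi')=D^{\NA}(\phi')$; transporting the failure of (ii) for $\phi$ through the construction then forces (i) to fail for $\phi'$, contradicting the hypothesis. The role of the assumption $0\le\delta\le1$ is to keep the two right-hand sides comparable, via the always-valid inequality $D^{\NA}\le J^{\NA}$. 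I expect the main obstacle to be exactly this concentration step: the infimum defining $L^{\NA}(\phi)$ need not be attained among the finitely many components $v_E$ of the given test configuration, so realizing (an approximation of) the minimizer by a single-atom Monge--Amp\`ere measure amounts to reducing to special test configurations, and one must simultaneously control the three quantities $g_\phi(v_{\triv})$, $E^{\NA}$ and $J^{\NA}$ as $\phi$ is deformed into $\phi'$. This reduction seems to require genuine birational input (an anti-canonical MMP / Li--Xu-type argument) rather than the formal identities above, and is the one step not reducible to the inequalities already recorded.
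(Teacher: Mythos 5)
Your formal set-up is correct: the identity $M^{\NA}(\phi)-D^{\NA}(\phi)=\int_{\Xdiv}g_\phi\,\MA^{\NA}(\phi)-\inf_{\Xdiv}g_\phi$ with $g_\phi(v)=A_X(v)+(\phi-\phi_\triv)(v)$, the resulting expressions for $M^{\NA}$, $D^{\NA}$ and $J^{\NA}$, and the implication (ii)$\implies$(i) are all right. But the paper does not prove this theorem at all --- it quotes it from \cite{BBJ15}, where it is established ``using the Minimal Model Program as in \cite{LX}'' --- and the step you defer is not a technical loose end but the entire content of the statement. Two concrete gaps remain. First, even granting a special test configuration $\phi'$ with $\MA^{\NA}(\phi')=\delta_{v_0}$, the defect $\int g_{\phi'}\,\MA^{\NA}(\phi')-\inf_{\Xdiv}g_{\phi'}=g_{\phi'}(v_0)-\inf_{\Xdiv}g_{\phi'}$ does \emph{not} vanish merely because the measure is a single Dirac mass; one must also show that the infimum of $g_{\phi'}$ over \emph{all} of $\Xdiv$ is attained at $v_0$. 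This is a genuine (true, but nontrivial) property of special test configurations of Fano varieties, proved in \cite{BBJ15,Fuj16}, and your argument as written silently assumes it. Second, and more seriously, ``transporting the failure of (ii) for $\phi$ through the construction'' requires producing from an arbitrary normal ample $(\cX,\cL)$ a special test configuration along which the quantity $D^{\NA}-\delta J^{\NA}$ does not increase. This monotonicity along the steps of the relative anticanonical MMP (semistable reduction, log canonical modification, contractions and flips, base change) is the technical heart of \cite{BBJ15}, is where the hypothesis $0\le\delta\le1$ is actually used, and cannot be extracted from the inequalities $D^{\NA}\le J^{\NA}$ and $D^{\NA}\le M^{\NA}$ recorded in the paper. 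Since you explicitly leave this step unproved, the proposal establishes only the easy implication.
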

This is proved in~\cite{BBJ15} using the Minimal Model Program as in~\cite{LX}.
See~\cite{Fuj16} for a more general result, and also~\cite{Fuj15}.
%
%
%%%%%%%%%%%%%%%%%%%%%%%%%%%%%%%%%%%%%%%%%%%%%%%%%%%%%%%%%%%%%%%%%%%
%
%
%\newpage
\section{Non-Archimedean limits}\label{S105}
In this section we prove Theorem~A and Corollary~B.
%
%%%%%%%%%%%%%%%%%%%%%%%%%%%%%%%%%%%%%%%%%%%%%%%%%%%%%%%%%%%%%%%%%%%
%
\subsection{Rays of metrics and non-Archimedean limits}
For any line bundle $L$ on $X$, 
there is a bijection between smooth rays $(\phi^s)_{s>0}$
of metrics on $L$ and $S^1$-invariant smooth metrics $\Phi$ on
the pull-back of $L$ to $X\times\D^*$, 
with $\D^*=\D^*_1\subset\C$ the punctured unit disc. 
The restriction of $\Phi$ to $\cX_\tau$ for
$\tau\in\Delta^*$ is given by pullback of $\phi^{\log|\tau|^{-1}}$
under the map $\cX_\tau\to X$ given by the $\C^*$-action.
Similarly, smooth rays $(\phi^s)_{s>s_0}$ correspond to 
$S^1$-invariant smooth metrics on the pull-back of $L$ to $X\times\Delta_{r_0}^*$,
with $r_0=e^{-s_0}$.

A \emph{subgeodesic ray} is a ray $(\phi^s)$ whose corresponding
metric $\Phi$ is semipositive. Such rays can of course only exist when 
$L$ is nef.
\begin{defi} 
  We say that a smooth ray $(\phi^s)$ admits a non-Archimedean metric 
  $\phi^\NA$ as \emph{non-Archimedean limit} if there exists 
  a test configuration $(\cX,\cL)$ representing $\phi^\NA$ such that 
  the metric $\Phi$ on $L\times\D^*$ corresponding to $(\phi^s)_s$
  extends to a smooth metric on $\cL$ over $\D$. 
\end{defi}
In other words, a non-Archimedean limit exists iff $\Phi$ has \emph{analytic singularities} along $X\times\{0\}$, \ie splits into a smooth part and a divisorial part after pulling-back to a blow-up. 
\begin{lem}\label{lem:welldef}
  Given a ray $(\phi^s)_s$ in $\cH$, 
  the non-Archimedean limit $\phi^{\NA}\in\cH^{\NA}$ is unique, if it exists. 
\end{lem}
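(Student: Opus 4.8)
The plan is to show that any two test configurations witnessing the existence of a non-Archimedean limit of the ray $(\phi^s)_s$ must be equivalent in the sense of Definition~\ref{defi:equiv}, and hence define the same $\phi^\NA$. So suppose $(\cX_1,\cL_1)$ and $(\cX_2,\cL_2)$ both represent such a limit: the metric $\Phi$ on the pullback of $L$ to $X\times\D^*$ extends to a smooth metric $\Phi_i$ on $\cL_i$ over $\D$, for $i=1,2$. First I would choose a normal test configuration $\cX_3$ dominating $\cX_1$, $\cX_2$ and the trivial configuration $X\times\C$ simultaneously---e.g.\ the normalization of the graph of the birational maps---with $\C^*$-equivariant morphisms $p_i\colon\cX_3\to\cX_i$ and $\rho\colon\cX_3\to X\times\C$. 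Each $p_i$ is an isomorphism over $\C^*$ compatible with the identifications in Definition~\ref{defi:test}(iii), so the pullbacks $p_i^*\Phi_i$ are smooth metrics on the $\Q$-line bundles $p_i^*\cL_i$ that both restrict to $\Phi$ on the dense open set $\cX_3\setminus(\cX_3)_0\cong X\times\D^*$.

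Writing $p_i^*\cL_i=\rho^*(L\times\C)+D_i$ with $D_i$ a $\Q$-Cartier divisor supported on $(\cX_3)_0$ (both sides restrict to the pullback of $L$ over $\C^*$), the task reduces to proving $D_1=D_2$: this yields $p_1^*\cL_1=p_2^*\cL_2$, so $(\cX_3,p_1^*\cL_1)$ is a common pullback and the two given configurations are equivalent. Set $D:=D_1-D_2$ and consider the smooth metric $\Psi:=p_1^*\Phi_1-p_2^*\Phi_2$ on the $\Q$-line bundle $\cO_{\cX_3}(D)$. Since $p_1^*\Phi_1$ and $p_2^*\Phi_2$ both restrict to $\Phi$, the metric $\Psi$ is the trivial metric over $\cX_3\setminus(\cX_3)_0$; equivalently, writing $s_D$ for the canonical rational section of $\cO_{\cX_3}(D)$ whose divisor is $D$ (passing to an integer multiple to clear denominators if needed), the function $u:=\log|s_D|_\Psi$ vanishes identically on $\cX_3\setminus(\cX_3)_0$.

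The final step is a local rigidity computation near the central fiber. The function $u$ is smooth on $\cX_3\setminus\supp(D)$ and vanishes on the dense subset $\cX_3\setminus(\cX_3)_0$, hence $u\equiv0$ there by continuity. Fix now an irreducible component $E$ of $(\cX_3)_0$ and put $r_E:=\ord_E(D)$. At the generic point of $E$ the normal variety $\cX_3$ is regular, so $\cO_{\cX_3}(D)$ is locally free, a local equation $z$ for $E$ gives $s_D=z^{r_E}\cdot(\text{unit})$, and the smooth weight of $\Psi$ stays bounded; thus $u=r_E\log|z|+O(1)$ as $z\to0$. Were $r_E\neq0$, then $u\to\pm\infty$ along points of $\cX_3\setminus\supp(D)$ approaching $E$, contradicting $u\equiv0$. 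Hence $r_E=0$ for every component $E$, so $D=0$ and $D_1=D_2$, as desired. The step requiring the most care is precisely this rigidity: the content is that a $\Q$-divisor supported on the central fiber cannot support a smooth metric that is trivial over the complement of that fiber unless it vanishes, and the only subtlety is the reduction to the generic points of the $E$, where one uses normality to justify the local model for $s_D$.
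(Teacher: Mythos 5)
Your proof is correct and follows essentially the same route as the paper: pull both representatives back to a common normal test configuration, observe that the two line bundles differ by a $\Q$-divisor $D$ supported on the central fiber, and conclude $D=0$ because a smooth metric cannot exist on $\cO(D)$ if $D\neq0$. The only difference is that you spell out the local rigidity computation at the generic points of the components of $\cX_0$, which the paper compresses into the single sentence that a smooth metric on $\cL_1$ induces a singular metric on $\cL_1+D$ that is smooth iff $D=0$.
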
 
\begin{proof}
  Let $\psi_1$ and $\psi_2$ be non-Archimedean limits of $(\phi^s)_s$
  and let $\Phi$ be the smooth metric on $L\times\Delta^*$ defined by the
  ray $(\phi^s)$.
  For $i=1,2$, pick a representative $(\cX_i,\cL_i)$ of $\psi_i$ 
  such that $\Phi$ extends as a smooth metric on $\cL_i$ over $\D$.
  After replacing $(\cX_i,\cL_i)$ by suitable pullbacks, we 
  may assume $\cX_1=\cX_2=\colon\cX$ and that $\cX$ is normal.
  Then $\cL_2=\cL_1+D$ for a $\Q$-divisor $D$ supported
  on $\cX_0$. Now a smooth metric on $\cL_1$ induces a singular metric
  on $\cL_1+D$ that is smooth iff $D=0$. Hence $\cL_1=\cL_2$, so that 
  $\psi_1=\psi_2$.
\end{proof}

\begin{rmk}
  Following~\cite[\S2]{BerkHodge} (see also~\cite{amoebae,konsoib}) 
  one can construct a
  compact Hausdorff space $X^\An$ fibering over the interval $[0,1]$
  such that the fiber $X^\An_\rho$ over any point $\rho\in(0,1]$ 
  is homeomorphic to the complex manifold $X$, 
  and the fiber $X^\An_0$ over $0$ is homeomorphic to 
  the Berkovich analytification of $X$ with respect to the trivial
  norm on $\C$. Similarly, the line bundle $L$ induces a line bundle
  $L^\An$ over $X^{\An}$.
  If a ray $(\phi^s)_{s>0}$ admits a non-Archimedean limit
  $\phi^\NA$, then it induces a continuous metric on $L^{\An}$ whose
  restriction to $L^\An_\rho$ is given by $\phi^{\log\rho^{-1}}$
  and whose restriction to $X^\an_0$ is given by $\phi^\NA$.
  In this way, $\phi^\NA$ is indeed the limit of $\phi^s$ as $s\to\infty$.
\end{rmk}
% 
%%%%%%%%%%%%%%%%%%%%%%%%%%%%%%%%%%%%%%%%%%%%%%%%%%%%%%%%%%%%%%%%%%%
%
\subsection{Non-Archimedean limits of functionals}
For the rest of~\S\ref{S105}, assume that $L$ is ample.
\begin{defi}\label{defi:NAlimit} 
  A functional $F\colon\cH\to\R$ admits a functional
  $F^{\NA}\colon\cH^{\NA}\to\R$ as a \emph{non-Archimedean limit} 
  if, for every smooth subgeodesic ray $(\phi^s)$ in $\cH$ admitting a
  non-Archimedean limit $\phi^\NA\in\cH^\NA$, we have 
  \begin{equation}\label{equ:Mslope}
    \lim_{s\to+\infty}\frac{F(\phi^s)}{s}=F^{\NA}(\phi^\NA).
  \end{equation}
\end{defi}
\begin{prop}\label{prop:NAlim} 
  If $F\colon\cH\to\R$ admits a non-Archimedean limit
  $F^{\NA}\colon\cH^{\NA}\to\R$, then $F^{\NA}$ is homogeneous.
\end{prop}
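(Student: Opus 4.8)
The plan is to establish homogeneity $F^\NA(\phi_d)=d\,F^\NA(\phi)$ for every $\phi\in\cH^\NA$ and $d\in\N^*$ by realizing both $\phi$ and $\phi_d$ as non-Archimedean limits of explicit smooth subgeodesic rays that differ by a time rescaling. First I would fix $\phi\in\cH^\NA$, represented by its normal ample test configuration $(\cX,\cL)$, and choose a smooth strictly positive $S^1$-invariant metric $\Phi$ on $\cL$ near the central fiber; this exists since $\cL$ is relatively ample. The associated ray $(\phi^s)_s$ is then a smooth subgeodesic ray in $\cH$ whose non-Archimedean limit is exactly $\phi$, so the defining property of $F^\NA$ gives $\lim_{s\to\infty}F(\phi^s)/s=F^\NA(\phi)$.

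The key step is to identify the non-Archimedean limit of the time-rescaled ray $(\phi^{ds})_s$. Recall that $\phi_d$ is represented by the base change of $(\cX,\cL)$ along the $d$-th power map $\sigma\mapsto\sigma^d=\tau$ on the base; denote this test configuration by $(\cX_d,\cL_d)$ and the induced morphism by $f_d\colon\cX_d\to\cX$. Pulling back $\Phi$ by $f_d$ produces a smooth semipositive metric on $\cL_d$ over the disc, so $f_d^*\Phi$ defines a smooth subgeodesic ray $(\psi^s)_s$ whose non-Archimedean limit is $\phi_d$. The point is to check that this ray is precisely $(\phi^{ds})_s$: over the punctured disc, the fiber of $\cX_d$ over $\sigma$ maps under $f_d$ into the fiber of $\cX$ over $\tau=\sigma^d$, and since $|\sigma|=e^{-s}$ forces $\log|\tau|^{-1}=d\,\log|\sigma|^{-1}=ds$, the restriction of $f_d^*\Phi$ to the $\sigma$-fiber corresponds, under the $\C^*$-action trivialization, to $\phi^{ds}$. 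Thus $\psi^s=\phi^{ds}$.

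Granting this identification, the computation is immediate: applying the defining property of $F^\NA$ to the subgeodesic ray $(\psi^s)_s$ and substituting $t=ds$ yields
\[
  F^\NA(\phi_d)=\lim_{s\to\infty}\frac{F(\psi^s)}{s}
  =\lim_{s\to\infty}\frac{F(\phi^{ds})}{s}
  =\lim_{t\to\infty}\frac{d\,F(\phi^t)}{t}
  =d\,F^\NA(\phi),
\]
which is exactly homogeneity.

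I expect the main obstacle to be the second step: verifying rigorously that the base change along $\tau\mapsto\tau^d$ corresponds to the time rescaling $s\mapsto ds$ of the ray, and in particular that $f_d^*\Phi$ is a genuine smooth metric on $\cL_d$ over the full disc, so that $(\phi^{ds})_s$ indeed admits $\phi_d$ as non-Archimedean limit in the sense of Definition~\ref{defi:NAlimit}. Care is needed because $\cX_d$ may be singular and $f_d$ is ramified over the central fiber; one must confirm that the pullback of a smooth semipositive metric under $f_d$ remains smooth and semipositive in the relevant sense, and that the $\C^*$-equivariant trivializations over the punctured disc match up to give $\psi^s=\phi^{ds}$ rather than a shifted or twisted reparametrization.
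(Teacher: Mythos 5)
Your proposal is correct and follows essentially the same route as the paper: both arguments rest on the observation that the time-rescaled ray $(\phi^{ds})_s$ admits $\phi_d$ (represented by the base change under $\tau\mapsto\tau^d$) as its non-Archimedean limit, after which the substitution $t=ds$ gives homogeneity. The only cosmetic difference is that you explicitly construct a ray with limit $\phi$ from a positive metric on the ample representative, while the paper starts from an arbitrary such ray.
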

\begin{proof} 
  Consider a semiample test configuration $(\cX,\cL)$ representing a
  non-Archimedean metric $\phi^\NA\in\cH^\NA$, and let 
  $(\phi^s)_s$ be a smooth subgeodesic ray admitting $\phi^\NA$ as a non-Archimedean limit.
  For $d\ge1$, let $(\cX_d,\cL_d)$ be the normalized base change 
  induced by $\tau\to\tau^d$. The associated non-Archimedean metric 
  $\phi_d^\NA$ is then the non-Archimedean limit of the subgeodesic ray
  $(\phi^{ds})$, so 
  $\lim_{s\to\infty}s^{-1}F(\phi_{ds})=F^\NA(\phi^\NA_d)$.
  On the other hand, we clearly have 
  $\lim_{s\to\infty}(ds)^{-1}F(\phi^{ds})=\lim_{s\to\infty}s^{-1}F(\phi^s)=F^\NA(\phi^\NA)$.
  The result follows.
\end{proof}
%
%%%%%%%%%%%%%%%%%%%%%%%%%%%%%%%%%%%%%%%%%%%%%%%%%%%%%%%%%%%%%%%%%%%
%
\subsection{Asymptotics of the functionals}\label{S112}
The following result immediately implies Theorem~A and Corollary~B.
\begin{thm}\label{thm:asymfunc} 
  The functionals $E$, $H$, $I$, $J$, $M$ and $R$ 
  on $\cH$ admit non-Archimedean limits
  on $\cH^{\NA}$ given, respectively, by 
  $E^\NA$, $H^\NA$, $I^\NA$, $J^\NA$, $M^\NA$ and $R^\NA$.
\end{thm}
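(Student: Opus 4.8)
The plan is to write each functional in the list as a difference of Deligne pairings via Lemma~\ref{lem:funcdel}, and to reduce every limit to a single \emph{slope formula}. Fix a normal representative $(\cX,\cL)$ of $\phi^\NA$ on which the metric $\Phi$ attached to the ray $(\phi^s)$ extends smoothly, and let $\bar\cX\to\P^1$ be its compactification. The key assertion is that for any smooth metrics $\Phi_0,\dots,\Phi_n$ on line bundles $\cL_0,\dots,\cL_n$ over $\bar\cX$ which restrict over $\cX\setminus\cX_0\simeq X\times(\C\setminus\{0\})$ to the ray metrics $\phi_0^s,\dots,\phi_n^s$, one has
\begin{equation*}
  \lim_{s\to+\infty}\frac1s\,\langle\phi_0^s,\dots,\phi_n^s\rangle_X=(\bar\cL_0\cdot\ldots\cdot\bar\cL_n).
\end{equation*}

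To prove this slope formula I would argue as follows. The pairing $\langle\Phi_0,\dots,\Phi_n\rangle_{\bar\cX/\P^1}$ is a \emph{continuous} metric on the line bundle $\cM:=\langle\bar\cL_0,\dots,\bar\cL_n\rangle_{\bar\cX/\P^1}$ over $\P^1$---continuity being exactly the fiber-integral fact recalled after~\eqref{equ:metrind}---and by~\eqref{equ:curv} its curvature is $\int_{\bar\cX/\P^1}dd^c\Phi_0\wedge\dots\wedge dd^c\Phi_n$, so $\deg_{\P^1}\cM=(\bar\cL_0\cdot\ldots\cdot\bar\cL_n)$. Over $\P^1\setminus\{0\}$ the family is trivial, so the classical Deligne pairing furnishes a meromorphic frame of $\cM$ that is holomorphic and nowhere zero there; its order at $0$ therefore equals $\deg_{\P^1}\cM$. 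Writing the continuous metric in this frame and setting $s=\log|\tau|^{-1}$ gives $\langle\phi_0^s,\dots,\phi_n^s\rangle_X=(\bar\cL_0\cdot\ldots\cdot\bar\cL_n)\,s+O(1)$, which is the claim.

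Granting the slope formula, $E$, $I$, $J$ and $R$ are immediate. For $E$, $I$, $J$ the only metrics appearing in Lemma~\ref{lem:funcdel} are the ray metric (extending to $\Phi$ on $\bar\cL$) and the fixed reference $\phi_\re$, which I extend by pullback to $\rho^*(L\times\P^1)$; matching the resulting intersection numbers against the definitions in~\S\ref{S111} returns $E^\NA$, $I^\NA$, $J^\NA$. For $R$ the canonical slot is the \emph{fixed} smooth metric $\half\log\MA(\phi_\re)$ on $K_X$, which I extend to a smooth metric on $\rho^*K^\lo_{X\times\P^1/\P^1}$; the slope formula then yields $(\rho^*K^\lo_{X\times\P^1/\P^1}\cdot\bar\cL^n)=VR^\NA(\phi^\NA)$.

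The genuine difficulty, as I expect, lies in the entropy $H$. By Lemma~\ref{lem:funcdel} one has $V(H+R)(\phi^s)=\langle\half\log\MA(\phi^s),(\phi^s)^n\rangle_X+\mathrm{const}$, where now the canonical slot $\half\log\MA(\phi^s)$---the metric on $K_X$ attached to the fibrewise volume form $(dd^c\Phi|_{\cX_\tau})^n$---itself moves with $s$. Passing to a smooth snc model $\cX'\to\cX$, on which $K^\lo_{\cX'/\C}$ is an honest line bundle carrying a fixed smooth metric $\Psi$, the pullback of $\Phi$ becomes only semipositive, so $\half\log\MA(\phi^s)-\Psi$ is unbounded near the central fibre and the slope formula does not apply verbatim. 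The way through is to control the fibrewise mass $\int_{\cX'_\tau}e^{2\Psi|_{\cX'_\tau}}$ as $\tau\to0$, which is precisely the content of Lemma~\ref{lem:estim}; this estimate governs the concentration of $\MA(\phi^s)$ and hence the logarithmic asymptotics of the relative entropy, upgrading the slope formula to $\lim_{s\to+\infty}\frac1s V(H+R)(\phi^s)=(K^\lo_{\bar\cX/\P^1}\cdot\bar\cL^n)=V(H^\NA+R^\NA)(\phi^\NA)$. Subtracting the slope of $R$ isolates $H^\NA$. Finally the Mabuchi functional requires no new input: the Chen--Tian formula $M=H+R+\bar S\,E$ and its non-Archimedean counterpart $M^\NA=H^\NA+R^\NA+\bar S\,E^\NA$ combine the slopes already found, and Proposition~\ref{prop:NAlim} confirms that the limit is the homogeneous functional $M^\NA$.
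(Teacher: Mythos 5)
Your architecture coincides with the paper's: Lemma~\ref{lem:funcdel} converts each functional into Deligne pairings, your slope formula is exactly Lemma~\ref{lem:deligne} (your degree computation via the curvature formula~\eqref{equ:curv} is equivalent to the paper's computation of the weight of the $\C^*$-action on the central fiber of $\langle\bar\cL_0,\dots,\bar\cL_n\rangle_{\bar\cX/\P^1}$; do make sure the nonvanishing frame over $\P^1\setminus\{0\}$ is the $\C^*$-equivariant one, so that it is genuinely meromorphic at $0$ and its order there equals the degree), and the cases of $E$, $I$, $J$, $R$ and the Chen--Tian reduction of $M$ are handled identically. One small imprecision: the slope formula requires the chosen representative $(\cX,\cL)$ to be \emph{smooth} with $\cX_0$ of snc support, not merely normal, since the relative Deligne pairing and the continuity of its metric are set up over smooth total spaces; this costs nothing, as $\Phi$ pulls back to a smooth (now only semipositive) metric on such a model.

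The gap is in the entropy term. After introducing the fixed reference metric $\Psi_\re$ on $K^\lo_{\cX/\C}$, what must be shown is that
\[
\int_X\log\Bigl[\frac{\MA(\phi^s)}{e^{2\psi^s_\re}}\Bigr](dd^c\phi^s)^n=o(s).
\]
Lemma~\ref{lem:estim} controls only the total masses $\int_{\cX_\tau}e^{2\Psi_\tau}\sim(\log|\tau|^{-1})^d$ of the \emph{reference} volume forms; it says nothing about $\MA(\phi^s)$ itself, so it cannot by itself "govern the concentration of $\MA(\phi^s)$." It yields the \emph{lower} bound, by writing the integral as a relative entropy (which is nonnegative) minus $\log\int_Xe^{2\psi^s_\re}=O(\log s)$. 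The \emph{upper} bound requires a separate input, absent from your sketch: the pointwise estimate that the density $g_\tau=(dd^c\Phi|_{\cX_\tau})^n/e^{2\Psi_\tau}$ is uniformly bounded above as $\tau\to0$. The paper proves this by bounding $dd^c\Phi\le C\Omega$ for a local Euclidean form $\Omega$ (using smoothness of $\Phi$ on the total space) and then comparing, in the logarithmic polar coordinates of Lemma~\ref{lem:estim}, the restrictions to $\cX_\tau$ of the Euclidean volume forms with $e^{2\Psi_\tau}$: both scale like $(\log|\tau|^{-1})^p$ up to bounded factors, whence $\log g_\tau\le C$ and the integral is bounded above by a constant, since $(dd^c\Phi|_{\cX_\tau})^n$ has fixed mass $V$. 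Without this step you obtain only one of the two inequalities needed for~\eqref{equ:smallo}.
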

In addition, we have the following result due to Berman~\cite[Proposition~3.8]{Berm16}.
See also~\cite[Theorem 3.1]{BBJ15} for a more general result.
\begin{thm}\label{asymDing}
  If $L:=-K_X$ is ample, then the Ding functional $D$ on $\cH$ admits 
  $D^\NA$ on $\cH^\NA$ as non-Archimedean limit.
\end{thm}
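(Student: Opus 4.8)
Since this theorem is essentially due to Berman and is established in a more general form in~\cite{BBJ15}, I will only indicate the strategy adapted to the present formalism. Because $D=L-E$ and $D^\NA=L^\NA-E^\NA$, and since $E$ already admits $E^\NA$ as non-Archimedean limit by Theorem~\ref{thm:asymfunc}, it suffices to prove that $L$ admits $L^\NA$ as non-Archimedean limit, i.e.\ that
\begin{equation*}
  \lim_{s\to+\infty}\frac{L(\phi^s)}{s}=L^\NA(\phi^\NA)
\end{equation*}
for every smooth subgeodesic ray $(\phi^s)$ with non-Archimedean limit $\phi^\NA\in\cH^\NA$ represented by a test configuration $(\cX,\cL)$ on which the associated metric $\Phi$ extends smoothly. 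Writing $\tau=e^{-s}$ and identifying $\cX_\tau\simeq X$, we have $L(\phi^s)=-\half\log\int_{\cX_\tau}e^{-2\Phi|_{\cX_\tau}}$, so the whole point is to compute the logarithmic rate of decay of $I(\tau):=\int_{\cX_\tau}e^{-2\Phi|_{\cX_\tau}}$ as $\tau\to0$: I aim to show that $I(\tau)$ decays like $|\tau|^{2\lambda}$ up to logarithmic factors, with $\lambda=L^\NA(\phi^\NA)$, which yields slope $\lambda$ after applying $-\half\log(\cdot)/s$.

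First I would pass to an snc model: choose a log resolution $\mu\colon\cX'\to\cX$ with $\cX'$ smooth, $\cX'_0=\sum_E b_EE$ of snc support, and $\cX'$ dominating $X\times\C$ via $\rho$. Since $\mu$ is an isomorphism over $\D^*$, fiberwise integration is unchanged and $I(\tau)=\int_{\cX'_\tau}e^{-2\mu^*\Phi|_{\cX'_\tau}}$; note that only smoothness, not positivity, of $\mu^*\Phi$ is used here. The key algebraic input is that $\mu^*(-\cL)$ and $K_{\cX'/\C}$ agree on the generic fiber, both restricting to $K_X$, so that
\begin{equation*}
  K_{\cX'/\C}+\mu^*\cL=\sum_E a_E E
\end{equation*}
is supported on $\cX'_0$. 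Viewing the smooth metric $\mu^*(-\Phi)$ on $\mu^*(-\cL)=K_{\cX'/\C}-\sum_E a_E E$ as a singular metric on $K_{\cX'/\C}$, the fiberwise volume form becomes a fixed smooth relative volume form multiplied by $\prod_E|s_E|^{2a_E}$, where $s_E$ cuts out $E$; in local snc coordinates with $\tau=u\prod_j z_j^{b_j}$ this weight combines with the intrinsic $\prod_j|z_j|^{-2(b_j-1)}$ singularity of the relative volume form.

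The analytic heart of the argument is then the asymptotics of these monomial fiber integrals, exactly the type of estimate recorded in Lemma~\ref{lem:estim}. A local coarea computation shows that the dominant contribution to $I(\tau)$ as $\tau\to0$ comes from the component $E$ minimizing $(1+a_E)/b_E$, and is comparable to $|\tau|^{2\min_E(1+a_E)/b_E-2}$ times a power of $\log|\tau|^{-1}$, the global behavior being assembled by a partition of unity. I expect this uniform monomial estimate — matching upper and lower bounds, together with the verification that the logarithmic corrections are negligible after dividing by $s$ — to be the main obstacle. Granting it,
\begin{equation*}
  \lim_{s\to+\infty}\frac{L(\phi^s)}{s}=\min_E\frac{1+a_E-b_E}{b_E}.
\end{equation*}

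It remains to identify this minimum with $L^\NA(\phi^\NA)$. Using $L=-K_X$, so that $\rho^*(L\times\C)=-q^*K_X$ with $q\colon\cX'\to X$, together with the relation $K_{\cX'/\C}=K_{\cX'/X\times\C}+q^*K_X$ and the log-discrepancy identity $A_{X\times\C}(\ord_E)=b_E(1+A_X(v_E))$ from~\S\ref{S201}, a direct computation gives
\begin{equation*}
  \frac{1+a_E-b_E}{b_E}=A_X(v_E)+(\phi^\NA-\phi_\triv)(v_E),
\end{equation*}
where $(\phi^\NA-\phi_\triv)(v_E)=b_E^{-1}\ord_E\!\left(\mu^*\cL-\rho^*(L\times\C)\right)$ as in~\S\ref{S111}. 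Thus the slope equals $\min_E\bigl(A_X(v_E)+(\phi^\NA-\phi_\triv)(v_E)\bigr)$, which dominates $L^\NA(\phi^\NA)=\inf_v\bigl(A_X(v)+(\phi^\NA-\phi_\triv)(v)\bigr)$. Finally, since $v\mapsto A_X(v)+(\phi^\NA-\phi_\triv)(v)$ is of log-canonical-threshold type, its infimum is attained at a divisor lying on a sufficiently high log resolution; choosing the snc model $\cX'$ accordingly forces equality, completing the proof.
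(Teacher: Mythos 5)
The paper does not give an internal proof of this statement: it is quoted from Berman~\cite[Proposition~3.8]{Berm16} (see also~\cite[Theorem~3.1]{BBJ15}), so there is no argument of the paper's own to compare against. Your sketch faithfully reproduces the structure of the cited proof: reduce to the slope of $L$ via Theorem~\ref{thm:asymfunc} for $E$, pass to an snc model, convert $e^{-2\Phi|_{\cX'_\tau}}$ into a monomial weight $\prod_E|s_E|^{2a_E}$ times a smooth relative volume form using $K_{\cX'/\C}+\mu^*\cL=\sum_E a_E E$, and extract the exponent by the simplex parametrization of Lemma~\ref{lem:estim}. Your bookkeeping checks out: the canonical-section trivialization of $\cO(\sum_E a_E E)|_{\cX'_\tau}$ does produce the weight $\prod_E|s_E|^{+2a_E}$ (this is the one place where a sign error is easy to make; the test case $(X\times\C,\,L\times\C+c\cX_0)$, where $a_E=c$, $b_E=1$ and $I(\tau)\sim|\tau|^{2c}$, confirms your choice), and the identity $\frac{1+a_E-b_E}{b_E}=A_X(v_E)+(\phi^\NA-\phi_\triv)(v_E)$ follows from $K_{\cX'/X\times\C}+D=K_{\cX'/\C}+\mu^*\cL=\sum_E a_E E$ exactly as you say. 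Two steps remain only asserted. First, the matching two-sided bounds $\int_{\cX'_\tau}\prod_E|s_E|^{2d_E}e^{2\Psi_\tau}\sim|\tau|^{2\min_E d_E/b_E}$ up to powers of $\log|\tau|^{-1}$: this is a genuine but routine extension of Lemma~\ref{lem:estim} (same logarithmic polar coordinates, with the extra factor $|\e\tau|^{2\langle d,w\rangle}$ integrated over the simplex $\sigma$ and localized near the minimizing vertex), and since only the leading order of $\log I(\tau)$ is needed, the logarithmic corrections are harmless. Second, the equality of $\min_E\bigl(A_X(v_E)+(\phi^\NA-\phi_\triv)(v_E)\bigr)$ with the infimum over all divisorial valuations: this holds on \emph{any} snc model, since such lct-type infima are computed at the vertices of the dual complex of a log-smooth model (cf.\ the treatment of $L^\NA$ in~\cite[\S7.7]{BHJ1}); you do not need to choose the resolution adapted to a minimizer. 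Neither point is a conceptual gap.
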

\begin{rmk}\label{R202}
  In~\S\ref{S110} we will extend the two previous results to the logarithmic setting
  and with relaxed positivity assumptions.
\end{rmk}

The main tool in the proof of Theorem~\ref{thm:asymfunc} is the following result (compare \cite[Lemma 6]{PRS}). 
\begin{lem}\label{lem:deligne} 
  For $i=0,\dots,n$, let $L_i$ be a line bundle on $X$ with a
  smooth reference metric $\phi_{i,\re}$. 
  Let also $(\cX,\cL_i)$ be a smooth test configuration for $(X,L_i)$, 
  $\Phi_i$ an $S^1$-invariant smooth metric on $\cL_i$ near $\cX_0$,
  and denote by $(\phi_i^s)$ the corresponding ray of smooth
  metrics on $L_i$. Then 
  \begin{equation*}
    \langle\phi_0^s,\dots,\phi_n^s\rangle_X
    -\langle\phi_{0,\re},\dots,\phi_{n,\re}\rangle_X
    =s\left(\bar\cL_0\cdot\ldots\cdot\bar\cL_n\right)+O(1)
  \end{equation*}
  as $s\to\infty$. Here $(\bar\cX,\bar\cL_i)$ is the compactification of
  $(\cX,\cL_i)$ for $0\le i\le n$ and
  $\langle\cdot,\ldots,\cdot\rangle_X$ denotes the Deligne pairing
  with respect to the constant morphism $X\to\{\mathrm{pt}\}$.
\end{lem}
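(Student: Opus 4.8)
The plan is to relate the left-hand side, a difference of values of the Deligne-pairing metric, to a relative intersection number on the compactified total space $\bar\cX$. The key observation is that the quantity $\langle\phi_0^s,\dots,\phi_n^s\rangle_X$ is the evaluation, on the fiber $\cX_\tau$ with $\tau=e^{-s}$, of the Deligne-pairing metric $\langle\Phi_0,\dots,\Phi_n\rangle_{\bar\cX/\P^1}$ constructed relatively over the base $\P^1$. Indeed, the construction of the metric on the Deligne pairing commutes with base change, and the restriction of $\Phi_i$ to $\cX_\tau$ is, via the $\C^*$-action, the pullback of $\phi_i^s$. So the first step is to make this identification precise: writing $s=\log|\tau|^{-1}$, the function
\begin{equation*}
  g(\tau):=\langle\phi_0^s,\dots,\phi_n^s\rangle_X
\end{equation*}
is the value at $\tau$ of a smooth metric on the line $\langle\bar\cL_0,\dots,\bar\cL_n\rangle_{\bar\cX/\P^1}$ over $\Delta^*$, so that understanding the asymptotics as $s\to\infty$ amounts to understanding the singularity of this metric at $\tau=0$.

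**Reducing to curvature and Lelong numbers.**
Next I would differentiate. By the curvature formula~\eqref{equ:curv}, the curvature of the Deligne-pairing metric over the base is the fiber integral $\int_{\bar\cX/\P^1}dd^c\Phi_0\wedge\dots\wedge dd^c\Phi_n$, which is a smooth $(1,1)$-form on $\Delta^*$. The heart of the matter is that, expressed in the coordinate $s$, the metric $g$ behaves linearly: $g(s)=as+O(1)$ with slope $a$ equal to the mass $\left(\bar\cL_0\cdot\ldots\cdot\bar\cL_n\right)$ of the limiting current at $\tau=0$. The natural way to see this is to view $\langle\bar\cL_0,\dots,\bar\cL_n\rangle_{\bar\cX/\P^1}$ as a line bundle of degree $\left(\bar\cL_0\cdot\ldots\cdot\bar\cL_n\right)$ on $\P^1$ (this degree being exactly the intersection number by functoriality of the pairing under the structure map $\P^1\to\{\mathrm{pt}\}$), equipped with the smooth metric $\langle\Phi_0,\dots,\Phi_n\rangle_{\bar\cX/\P^1}$ away from $0$ and $\infty$. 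Over $\P^1\setminus\{0\}$ the configuration is a product $X\times(\P^1\setminus\{0\})$ with the fixed reference-type data, so the metric is bounded there; all of the logarithmic growth is concentrated at $\tau=0$, and its coefficient is forced by the total degree.

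**Isolating the slope.**
Concretely, I would argue as follows. The difference $\langle\phi_0^s,\dots,\phi_n^s\rangle_X-\langle\phi_{0,\re},\dots,\phi_{n,\re}\rangle_X$ is the logarithm of a ratio of two smooth metrics on the same complex line $\langle\bar\cL_0,\dots,\bar\cL_n\rangle_{\bar\cX/\P^1}|_{\{\tau\}}$, hence a well-defined smooth function of $\tau\in\Delta^*$. Its derivative along the ray, computed from the change-of-metric formula~\eqref{equ:change} applied successively in each slot together with the $\C^*$-equivariance, converges as $s\to\infty$ to a constant, and that constant is the intersection number; equivalently, the logarithmic singularity of $g$ at $0$ has Lelong-type coefficient $\left(\bar\cL_0\cdot\ldots\cdot\bar\cL_n\right)$, which by the degree computation above is exactly the claimed slope. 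The cleanest route is probably to reduce, by multilinearity of both sides, to the case where the $L_i$ are very ample and represented by sections; then the inductive formula~\eqref{equ:metrind} lets one peel off one factor at a time, integrating $\log|\sigma_0|_{\Phi_0}$ against the product of curvatures over the fibers and tracking how this integral grows in $s$.

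**The main obstacle.**
The delicate point, and the step I expect to be hardest, is controlling the $O(1)$ error uniformly as $s\to\infty$, i.e. showing that the sub-leading term really stays bounded rather than contributing additional growth. This requires knowing that the fiber integrals appearing in~\eqref{equ:metrind} extend continuously across $\tau=0$ after subtracting the linear-in-$s$ part; the relevant continuity is exactly the statement quoted in the text from~\cite{Stol} (Theorem~4.9), which guarantees that the Deligne metric is continuous on the total space including the central fiber once the data are smooth on $\cL_i$ near $\cX_0$. Because the present lemma assumes $\cX$ smooth and $\Phi_i$ genuinely smooth on $\cL_i$ (not merely positive), this continuity applies directly, and the boundedness of the remainder follows; the genuine analytic difficulties caused by singular $\cX$ are precisely what is deferred to the later, more technical results such as Lemma~\ref{lem:estim}.
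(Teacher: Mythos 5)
Your skeleton is the same as the paper's: interpret $\langle\phi_0^s,\dots,\phi_n^s\rangle_X$ as the Deligne-pairing metric $\Psi=\langle\Phi_0,\dots,\Phi_n\rangle_{\cX/\C}$ on the line bundle $F=\langle\cL_0,\dots,\cL_n\rangle_{\cX/\C}$ over the base, observe that $F$ extends to a line bundle on $\P^1$ of degree $\left(\bar\cL_0\cdot\ldots\cdot\bar\cL_n\right)$, and invoke Stoll's continuity theorem to control the remainder. The gap is at the central step, the conversion of the global degree into the coefficient of logarithmic growth at $\tau=0$. You assert that the derivative of $g(s)=\langle\phi_0^s,\dots,\phi_n^s\rangle_X-\langle\phi_{0,\re},\dots,\phi_{n,\re}\rangle_X$ converges to the intersection number, ``equivalently'' that the singularity of $g$ at $0$ has Lelong-type coefficient equal to $\deg F$; neither claim is proved, and the first, even if established, only yields $g(s)=as+o(s)$, not the claimed $O(1)$ remainder (a smooth function whose derivative tends to $a$ can still differ from $as$ by an unbounded amount). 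More fundamentally, the degree is a global invariant of $F\to\P^1$, and continuity of $\Psi$ near $0$ alone cannot localize it there: you need a distinguished frame of $F$ near $0$ against which to measure the metric, and producing one is exactly the step your write-up skips.

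That frame comes from the $\C^*$-equivariant structure, which is the device the paper uses. Pick $v\in F_1$ and consider the section $\tau\mapsto\tau\cdot v$ of $F$ over $\C^*$. Since $F$, with its equivariant structure, is trivial over $\P^1\setminus\{0\}$ (this is the precise content of your remark that the configuration is a product there), this section extends meromorphically to $\P^1$ with divisor $w\cdot[0]$, where $w$ is the weight of the action on $F_0$; hence $w=\deg F=\left(\bar\cL_0\cdot\ldots\cdot\bar\cL_n\right)$, and $\sigma:=\tau^{-w}(\tau\cdot v)$ is a nowhere-vanishing holomorphic frame of $F$ near $0$. Stoll's theorem gives $\log|\sigma|_{\Psi}=O(1)$, and then $\log|v|_{\psi^s}=\log|\tau\cdot v|_{\Psi_\tau}=w\log|\tau|+\log|\sigma|_{\Psi_\tau}=-sw+O(1)$ finishes the proof. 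I would also discourage your proposed fallback of reducing to very ample $L_i$ and peeling off factors via~\eqref{equ:metrind}: that route forces you to estimate $\int_{\cX_\tau}\log|\sigma_0|_{\Phi_0}\,dd^c\Phi_1\wedge\dots\wedge dd^c\Phi_n$ by hand as $\tau\to0$, i.e.\ to redo precisely the analysis that the Deligne-pairing formalism together with Stoll's theorem is designed to package once and for all.
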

\begin{proof} 
  The Deligne pairing $F:=\langle\cL_0,\dots,\cL_n\rangle_{\cX/\C}$ is a
  line bundle on $\C$, endowed with a $\C^*$-action and a canonical identification of 
  its fiber at $\tau=1$ with the complex line $\langle L_0,\dots,L_n\rangle_X$. It extends to a 
  line bundle $\langle\bar\cL_0,\dots,\bar\cL_n\rangle_{\bar\cX/\P^1}$ on
  $\P^1$ that is $\C^*$-equivariantly trivial on $\P^1\smallsetminus\{0\}$.
  Denoting by $w\in\Z$ the weight of the $\C^*$-action on the fiber at $0$, we have 
  \begin{equation*}
    w
    =\deg\langle\bar\cL_0,\dots,\bar\cL_n\rangle_{\bar\cX/\P^1}
    =\left(\bar\cL_0,\dots,\bar\cL_n\right). 
  \end{equation*}
  Pick a nonzero vector $v\in F_1=\langle L_0,\dots,L_n\rangle_X$.
  The $\C^*$-action produces a section $\tau\mapsto\tau\cdot v$ of $F$ on 
  $\C^*$, and
  $\sigma:=\tau^{-w}(\tau\cdot v)$ is a nowhere vanishing section of $F$
  on $\C$, see~\cite[Corollary~1.4]{BHJ1}. 

  Since the metrics $\Phi_i$ are smooth and $S^1$-invariant,
  $\Psi:=\langle\Phi_0,\dots,\Phi_n\rangle_{\cX/\C}$ 
  is a continuous $S^1$-invariant metric on $F$ near $0\in\C$.
  Hence the function $\log|\sigma|_\Psi$ is bounded near
  $0\in\C$.

  The $S^1$-invariant metric $\Psi$ defines a ray $(\psi^s)$ of
  metrics on the line $F_1$ through
  $|v|_{\psi^s}=|\tau\cdot v|_{\Psi_\tau}$, for $s=\log|\tau|^{-1}$,
  where $\Psi_\tau$ is the restriction of $\Psi$ to $F_\tau$.
  Thus
  \begin{equation*}
    \log|v|_{\psi^s}
    =\log|\tau\cdot v|_{\Psi_\tau}
    =w\log|\tau|+\log|\sigma|_{\Psi_\tau}
    =-sw+O(1).
  \end{equation*}
  By functoriality, the metric $\psi^s$ on $F_1$ is nothing but the
  Deligne pairing $\langle\phi^s_0,\dots,\phi^s_n\rangle$.
  If we set $\psi_\re=\langle\phi_{0,\re},\dots,\phi_{n,\re}\rangle_X$,
  it therefore follows that 
  \begin{equation*}
    \langle\phi_0^s,\dots,\phi_n^s\rangle_X
    -\langle\phi_{0,\re},\dots,\phi_{n,\re}\rangle_X
    =\log|v|_{\psi_\re}-\log|v|_{\psi^s}
    =sw+O(1),
  \end{equation*}
  which completes the proof.  
\end{proof}
\begin{proof}[Proof of Theorem~\ref{thm:asymfunc}] 
  Let $(\phi^s)_s$ be a smooth subgeodesic ray in $\cH$ admitting a 
  non-Archi\-medean limit $\phi^\NA\in\cH^\NA$. Pick a
  test configuration $(\cX,\cL)$ representing $\phi^\NA$ such that
  $\cX$ is smooth and $\cX_0$ has snc support. Thus $\cL$ is relatively semiample and $(\phi^s)_s$ corresponds to a 
  smooth $S^1$-invariant semipositive metric $\Phi$ on $\cL$ over $\Delta$.
  By Lemma~\ref{lem:funcdel}, we have 
  \begin{equation*}
    (n+1)V\left(E(\phi^s)-E(\phi_{\re})\right)
    =\langle\phi^s,\dots,\phi^s\rangle_X-\langle\phi_{\re},\dots,\phi_{\re}\rangle_X. 
  \end{equation*}
  Using Lemma~\ref{lem:deligne}, it follows that 
  \begin{equation*}
    \lim_{s\to+\infty}\frac{E(\phi^s)}{s}=\frac{\left(\bar\cL^{n+1}\right)}{(n+1)V}=E^{\NA}(\phi^{\NA}), 
  \end{equation*}
  which proves the result for the Monge-Amp\`ere energy $E$. 
  The case of the functionals $I$, $J$ and $R$ is similarly 
  a direct consequence of Lemma~\ref{lem:funcdel} and Lemma~\ref{lem:deligne}. 
  In view of the Chen-Tian formulas for $M$ and $M^{\NA}$, 
  it remains to consider the case of the entropy functional $H$. 
  In fact, it turns out to be easier to treat the functional $H+R$.

  By Lemma~\ref{lem:funcdel} we have
  \begin{align*}
    V(H(\phi^s)+R(\phi^s))
    &=\langle\half\log\MA(\phi^s),\phi^s,\dots,\phi^s\rangle_X 
      -\langle\p_\re,\phi_\re,\dots,\phi_\re\rangle_X,
  \end{align*}
  where $\p_\re=\half\log\MA(\phi_\re)$, 
  so we must show that 
  \begin{equation}\label{equ:smallo}
    \langle\half\log\MA(\phi^s),\phi^s,\dots,\phi^s\rangle_X
    -\langle\p_\re,\phi_\re,\dots,\phi_\re\rangle_X
    =s\left(K^\lo_{\bar\cX/\P^1}\cdot\bar\cL^n\right)+o(s).
  \end{equation}
  The collection of metrics $\half\log\MA(\Phi|_{\cX_\tau})$ with $\tau\ne 0$
  defines a smooth metric $\Psi$ on $K^\lo_{\cX/\C}$ over $\D^*$, 
  but the difficulty here (as opposed to the situation in~\cite{PRS})
  is that $\Psi$ will not a priori extend to a smooth
  (or even locally bounded) metric on $K^\lo_{\cX/\C}$ over $\D$.
  Indeed, since we have assumed that $\cX$ is smooth, there is no reason why 
  $\Phi$ is strictly positive.
  
  Instead, pick a smooth, $S^1$-invariant reference metric $\Psi_\re$ on
  $K^\lo_{\cX/\C}$ over $\D$, and denote by $(\psi^s_\re)_{s>0}$ 
  the corresponding ray of smooth metrics on $K_X$. 
  By Lemma~\ref{lem:deligne} we have 
  \begin{equation*}
    \langle\psi^s_\re,\phi^s,\dots,\phi^s\rangle_X
    -\langle\p_\re,\phi_\re,\dots,\phi_\re\rangle_X
    =s\left(K^\lo_{\bar\cX/\P^1}\cdot\bar\cL^n\right)+O(1). 
  \end{equation*}
  Since 
  \begin{equation*}
    \langle\half\log\MA(\phi^s),\phi^s,\dots,\phi^s\rangle_X
    -\langle\psi^s_\re,\phi^s,\dots,\phi^s\rangle_X
    =\half\int_X\log\left[\frac{\MA(\phi^s)}{e^{2\psi^s_\re}}\right](dd^c\phi^s)^n,
  \end{equation*}
  Theorem~\ref{thm:asymfunc}  is therefore a consequence of the following result.
\end{proof}
\begin{lem}\label{L202}
  We have 
  $\int_X\log\left[\frac{\MA(\phi^s)}{e^{2\psi^s_\re}}\right](dd^c\phi^s)^n=O(\log s)$
  as $s\to\infty$. 
\end{lem}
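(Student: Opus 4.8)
The plan is to transport the integral to the fibres $\cX_\tau$ with $\tau=e^{-s}$, recognize it as a relative entropy of the fibrewise Monge--Amp\`ere measure against the reference volume form, and estimate it in both directions using control of the fibre integrals of $e^{2\Psi_\re}$.

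First, the reduction. For $\tau\ne 0$ the $\C^*$-action identifies $\cX_\tau$ with $X$, and under this identification $(dd^c\phi^s)^n$ becomes the fibrewise Monge--Amp\`ere $\mu_\tau:=(dd^c\Phi|_{\cX_\tau})^n$, while $e^{2\psi^s_\re}$ becomes $\nu_\tau:=e^{2\Psi_\re|_{\cX_\tau}}$, the smooth volume form attached to the restriction of the reference metric $\Psi_\re$ on $K^\lo_{\cX/\C}$. Set $f_\tau:=d\mu_\tau/d\nu_\tau$ and $m(\tau):=\int_{\cX_\tau}e^{2\Psi_\re}$. Since $\mu_\tau$ has constant total mass $V=(L^n)$ (it represents $c_1(L)^n$ on each fibre), writing $\bar\mu_\tau=\mu_\tau/V$ and $\bar\nu_\tau=\nu_\tau/m(\tau)$ for the normalizations and using $V^{-1}f_\tau=m(\tau)^{-1}\,d\bar\mu_\tau/d\bar\nu_\tau$ gives
\[
  \int_X\log\!\Big[\tfrac{\MA(\phi^s)}{e^{2\psi^s_\re}}\Big](dd^c\phi^s)^n
  = -V\log m(\tau) + V\,D(\bar\mu_\tau\,\|\,\bar\nu_\tau),
\]
where $D(\bar\mu_\tau\,\|\,\bar\nu_\tau)=\int_{\cX_\tau}\log\frac{d\bar\mu_\tau}{d\bar\nu_\tau}\,d\bar\mu_\tau\ge0$ is the relative entropy of the two probability measures.

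Second, two inputs feed the estimate. \textbf{(a)} A uniform density bound $f_\tau\le C'$: since $\Phi$ is smooth on the compact set $\pi^{-1}(\{|\tau|\le r\})$ we have $dd^c\Phi\le C\omega$ for a fixed K\"ahler form $\omega$ on the total space, so $\mu_\tau\le C^n(\omega|_{\cX_\tau})^n$; and because $\Psi_\re$ is a metric on the \emph{logarithmic} relative canonical bundle, $\nu_\tau$ dominates the fixed smooth fibre volume $(\omega|_{\cX_\tau})^n$ near $\cX_0$ (the log poles of $e^{2\Psi_\re}$ appear exactly where that fibre volume degenerates), so $(\omega|_{\cX_\tau})^n/\nu_\tau$ extends continuously by $0$ across $\cX_0$ and is bounded. \textbf{(b)} The mass estimate: by Lemma~\ref{lem:estim}, $m(\tau)$ grows at most polynomially in $s=\log|\tau|^{-1}$, say $m(\tau)=O(s^N)$, and stays bounded below by a positive constant for $s$ large; hence $|\log m(\tau)|=O(\log s)$.

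Finally, one concludes. For the \emph{lower} bound, $D(\bar\mu_\tau\,\|\,\bar\nu_\tau)\ge0$ (Jensen) gives the integral $\ge -V\log m(\tau)\ge -O(\log s)$. For the \emph{upper} bound, $d\bar\mu_\tau/d\bar\nu_\tau=(m(\tau)/V)f_\tau\le (m(\tau)/V)C'=O(s^N)$, so $D(\bar\mu_\tau\,\|\,\bar\nu_\tau)\le\log\sup(d\bar\mu_\tau/d\bar\nu_\tau)=O(\log s)$, and combined with $-V\log m(\tau)\le O(1)$ this yields the integral $\le O(\log s)$. The main obstacle is input \textbf{(b)}: controlling $m(\tau)=\int_{\cX_\tau}e^{2\Psi_\re}$ as $\tau\to0$ is precisely where the logarithmic (rather than ordinary) relative canonical bundle is indispensable, since it is the log structure that converts a would-be polynomial blow-up of the fibre volume into the mild factor $(\log|\tau|^{-1})^N$. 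This is the content of Lemma~\ref{lem:estim}, whose snc-local analysis (monomial model $\tau=\prod z_i^{b_i}$) is the genuinely delicate step; input \textbf{(a)} and the entropy inequalities are soft once \textbf{(b)} is in hand.
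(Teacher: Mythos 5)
Your proposal is correct and follows essentially the same route as the paper: the lower bound via nonnegativity of relative entropy together with the fibre-mass estimate of Lemma~\ref{lem:estim}, and the upper bound via a uniform upper bound on the density $f_\tau=(dd^c\Phi|_{\cX_\tau})^n/\nu_\tau$ (the paper's $g_\tau$). The one caveat is that your input \textbf{(a)} is not as soft as you claim: the domination of $(\omega|_{\cX_\tau})^n$ by $\nu_\tau$ is precisely the estimate~\eqref{e201}, which the paper proves by the same logarithmic polar coordinate computation in the snc model that underlies Lemma~\ref{lem:estim}, so it requires that local analysis rather than the continuity heuristic you offer.
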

Let us first prove an estimate of independent interest. 
See~\cite{konsoib} for more precise results.
\begin{lem}\label{lem:estim} 
  Let $\cX$ be an snc test configuration for $X$ and
  $\Psi$ a smooth metric on $K^\lo_{\cX/\C}$ near $\cX_0$.
  Denote by $e^{2\Psi_\tau}$ the induced volume form on $\cX_\tau$ 
  for $\tau\ne 0$. Then 
  \begin{equation}\label{e102}
    \int_{\cX_\tau} e^{2\Psi_\tau}\sim\left(\log|\tau|^{-1}\right)^d
    \quad\text{as $\tau\to 0$},
  \end{equation}
  with $d$ denoting the dimension of the dual complex of $\cX_0$, 
  so that $d+1$ is the largest number of local components of $\cX_0$.   
\end{lem}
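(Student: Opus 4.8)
The statement is a local, model-type computation: I want to understand how the fiber integral $\int_{\cX_\tau} e^{2\Psi_\tau}$ of a volume form coming from a smooth metric on the log relative canonical bundle behaves as $\tau \to 0$. Since $\cX$ is an snc test configuration, the central fiber $\cX_0 = \sum_i b_i E_i$ has simple normal crossing support, and near any point of $\cX_0$ I can choose local holomorphic coordinates $(z_0,\dots,z_n)$ in which $\tau = \prod_{i} z_i^{b_i}$ (the product over the $E_i$ passing through that point). The plan is to reduce the global integral to a sum of local model integrals over such charts via a partition of unity, and to evaluate the model integrals explicitly.

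**Key steps.** First I would unwind the definition: a smooth metric $\Psi$ on $K^\lo_{\cX/\C} = K_{\cX/\C} + \cX_{0,\red} - \cX_0$ means that the induced volume form on the fiber $\cX_\tau$ is, in local coordinates, comparable to the flat model form built from the relative log canonical generator. The crucial point is that $K^\lo_{\cX/\C}$ is exactly the bundle whose local sections, restricted to a fiber, look like $\frac{dz_1 \wedge \dots \wedge dz_n}{z_1 \cdots z_n}$ after solving $\tau = \prod z_i^{b_i}$ — i.e. the logarithmic poles along $\cX_{0,\red}$ are precisely what makes the fiberwise form have only logarithmic (rather than polynomial) blow-up. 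Concretely, in a chart where $k+1$ components $E_0,\dots,E_k$ meet, after using the relation $\tau = \prod_{i=0}^k z_i^{b_i}$ to eliminate one coordinate, the fiber integral reduces to a model integral of the type
\begin{equation*}
  \int_{|w_1|,\dots,|w_k|\le 1} \frac{\prod_j \tfrac{i}{2} dw_j \wedge d\bar w_j}{\prod_j |w_j|^2 \,\big(\text{bounded positive factor}\big)},
\end{equation*}
regularized by the constraint coming from $\tau$. Evaluating this in polar/Logarithmic coordinates $\rho_j = \log|w_j|^{-1}$ turns the integrand into (essentially) Lebesgue measure on a simplex-like region of size $\log|\tau|^{-1}$ in each direction, so the leading behavior is $(\log|\tau|^{-1})^k$. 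Taking the maximum over all charts, the dominant contribution comes from the deepest stratum of $\cX_0$, where the largest number $d+1$ of components meet; this gives the exponent $d = \dim$ of the dual complex. I would then assemble the local estimates, check that the smooth positive metric factors only affect the constant and contribute lower-order corrections, and conclude the two-sided asymptotic $\sim (\log|\tau|^{-1})^d$.

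**Main obstacle.** The genuinely delicate part is making the local-to-global reduction rigorous and controlling the interplay between charts of different depth, so that the lower-dimensional strata really do contribute only lower powers of $\log|\tau|^{-1}$ and cannot conspire to change the leading term or spoil the lower bound. One must verify that the smooth metric $\Psi$ — which is only comparable to, not equal to, the flat model — does not distort the leading asymptotic: the upper bound is relatively soft (dominate by a model with a uniform constant), but the matching lower bound requires that the positive contribution genuinely accumulates on the deepest stratum and is not cancelled or suppressed by the geometry away from it. I would handle this by a careful choice of adapted coordinates on a neighborhood of the closed stratum where exactly $d+1$ components meet, establishing the sharp lower bound there, and then bounding all other regions from above by $O\big((\log|\tau|^{-1})^{d-1}\big)$ or better. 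The careful bookkeeping of the exponents across the snc stratification is where I expect the real work to lie.
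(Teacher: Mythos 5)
Your proposal is correct and follows essentially the same route as the paper: identify a local frame of $K^\lo_{\cX/\C}$ with logarithmic poles along the components of $\cX_{0,\red}$ (so that $e^{2\Psi_\tau}$ is two-sidedly comparable, with uniform constants, to the model form), evaluate the local integral in logarithmic polar coordinates where the constraint $\prod|z_i|^{b_i}=|\e\tau|$ becomes a $p$-dimensional simplex of size $\log|\tau|^{-1}$, and conclude that a chart where $p+1$ components meet contributes $\sim(\log|\tau|^{-1})^p$, so the deepest stratum $p=d$ dominates after summing over a finite cover. The "main obstacle" you flag is handled exactly as you suggest, since the uniform two-sided comparison with the frame makes both the upper bound (sum of finitely many local contributions, each $O((\log|\tau|^{-1})^p)$ with $p\le d$) and the lower bound (a single chart at the deepest stratum) immediate.
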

Here $A\sim B$ means that $A/B$ is bounded from above and below by
positive constants. 
\begin{proof} 
  Since $\cX_0$ is an snc divisor, every point of $\cX_0$ 
  admits local coordinates $(z_0,\dots,z_n)$ that are defined in a neighborhood of 
  $B:=\left\{|z_i|\le 1\right\}$ and such that  $z_0^{b_0}\dots z_p^{b_p}=\e\tau$ 
  with $0\le p\le n$ and $\e>0$. 
  Here $b_i\in\Z_{>0}$ is the multiplicity of $\cX_0$ along $\{z_i=0\}$.
  The integer $d$ in the statement of the theorem is then the largest 
  such integer $p$. By compactness of $\cX_0$, it will be enough to show that
  \begin{equation*}
    \int_{B\cap\cX_\tau} e^{2\Psi_\tau}\sim\left(\log|\tau|^{-1}\right)^p.
  \end{equation*}
  The holomorphic $n$-form 
  \begin{equation*}
    \eta:=\frac1{p+1}\sum_{j=0}^p\frac{(-1)^j}{b_j}\frac{dz_0}{z_0}
    \wedge\dots\wedge
    \widehat{\frac{dz_j}{z_j}}\wedge\dots\wedge\frac{dz_p}{z_p}\wedge dz_{p+1}
    \wedge\dots\wedge dz_n
  \end{equation*}
  satisfies 
  \begin{equation*}
    \eta\wedge\frac{d\tau}{\tau}
    =\frac{dz_0}{z_0}\wedge\dots\wedge\frac{dz_p}{z_p}
    \wedge dz_{p+1}\wedge\dots\wedge dz_n.
  \end{equation*}
  Thus $\eta$ defines a local frame of $K^\lo_{\cX/\C}$ on $B$, 
  so the holomorphic $n$-form $\eta_\tau:=\eta|_{\cX_\tau}$ satisfies
  \begin{equation*}
    C^{-1}|\eta_\tau|^2
    \le e^{2\Psi_\tau}
    \le C|\eta_\tau|^2
  \end{equation*}
  for a constant $C>0$ independent of $\tau$. Hence it suffices to prove
  $\int_{B\cap\cX_\tau}|\eta_\tau|^2\sim\left(\log|\tau|^{-1}\right)^p$. 
  
  To this end, we parametrize $B\cap\cX_\tau$ in (logarithmic) 
  polar coordinates as follows. Consider the $p$-dimensional simplex 
  \begin{equation*}
    \sigma=\{w\in\R_{\ge0}^{p+1}\mid\sum_{j=0}^pb_jw_j=1\},
  \end{equation*}
  the $p$-dimensional (possibly disconnected) commutative compact Lie group
  \begin{equation*}
    T=\{\theta\in(\R/\Z)^{p+1}\mid\sum_{j=0}^pb_j\theta_j=0\},
  \end{equation*}
  and the polydisc $\bbD^{n-p}\subset\C^{n-p}$. We may cover $\C^*$ 
  by two simply
  connected open sets, on each of which we fix a branch of the complex logarithm. 
  We then define a diffeomorphism $\chi_\tau$ from $\sigma\times T\times\bbD^{n-p}$
  to $B\cap\cX_\tau$ by setting
  \begin{equation*}
    z_j=e^{w_j\log(\e\tau)+2\pi i\theta_j}
    \quad\text{for $0\le j\le p$}.
  \end{equation*}
  A simple computation shows that
  \begin{equation*}
    \chi_\tau^*(|\eta_\tau|^2)
    =\mathrm{const}\left(\log|\e\tau|^{-1}\right)^pdV,
  \end{equation*}
  where $dV$ denotes the natural volume form on $\sigma\times T\times\bbD^{n-p}$.
  It follows that, for $|\tau|\ll 1$,  
  \begin{equation*}
    \int_{B\cap\cX_\tau}
    |\eta_\tau|^2
    \sim\int_{\sigma\times T\times\bbD^{n-p}}\chi_\tau^*(|\eta_\tau|^2)
    \sim\left(\log|\tau|^{-1}\right)^p,
  \end{equation*}
  which completes the proof.
\end{proof}
\begin{proof}[Proof of Lemma~\ref{L202}]
  On the one hand, we have 
  \begin{multline*}
    V^{-1}\int_X\log\left[\frac{\MA(\phi^s)}{e^{2\psi^s_\re}}\right](dd^c\phi^s)^n\\
    =\int_X\log\left[\frac{\MA(\phi^s)}{e^{2\psi^s_\re}/\int_X e^{2\psi^s_\re}}\right]\MA(\phi^s)
    -\log\int_X e^{2\psi^s_\re}\ge-\log\int_X e^{2\psi^s_\re},
  \end{multline*}
  since the first term on the second line 
  is the relative entropy of the probability measure
  $\MA(\phi^s)$ with respect to the probability measure
  $e^{2\psi_\re^s}/\int_X e^{2\psi_\re^s}$.
  By Lemma~\ref{lem:estim} we have 
  $\int_Xe^{2\psi^s_\re}=O(s^d)$, where $0\le d\le n$.
  This gives the lower bound in Lemma~\ref{L202}.

  To get the upper bound, it suffices to prove that the function 
  $g_\tau:=\frac{(dd^c\Phi|_{\cX_\tau})^n}{e^{2\Psi_\tau}}$ on $\cX_\tau$ 
  is uniformly bounded from above.
  Indeed, if $\tau=e^{-s}$, we then see that 
  \begin{equation*}
    \int_X\log\left[\frac{\MA(\phi^s)}{e^{2\psi^s_\re}}\right](dd^c\phi^s)^n
    =\int_{\cX_\tau}(\log V^{-1}+\log g_\tau)(dd^c\Phi|_{\cX_\tau})^n
  \end{equation*}
  is uniformly bounded from above, since $(dd^c\Phi|_{\cX_\tau})^n$ has
  fixed mass $V$ for all $\tau$.

  To bound $g_\tau$ from above, 
  we use local coordinates $(z_j)_0^n$ as in the proof of Lemma~\ref{lem:estim}.
  With the notation in that proof, it suffices to prove that the function
  $(\Omega|_{\cX_\tau})^n/e^{2\Psi_\tau}$ on $\cX_\tau$ 
  is uniformly bounded from above, where 
  $\Omega:=\frac{i}{2}\sum_{j=0}^ndz_j\wedge d\bar{z}_j$.
  Indeed, we have $dd^c\Phi\le C\Omega$ for some constant $C>0$.
  It then further suffices to prove the bound
  \begin{equation}\label{e201}
    i^ndz_0\wedge d\bar{z}_0
    \wedge\dots\wedge
    \widehat{dz_j\wedge d\bar{z}_j}
    \wedge\dots\wedge
    dz_n\wedge dz_n\bigg|_{\cX_\tau}
    \le Ce^{2\Psi_\tau}
  \end{equation}    
  for $0\le j\le p$ and a uniform constant $C>0$.

  To prove~\eqref{e201} we use the logarithmic polar coordinates in the proof 
  of Lemma~\ref{L202}. Namely, if 
  $\chi_\tau\colon\sigma\times T\times\bbD^{n-p}\to B\cap X_\tau$ 
  is the diffeomorphism in that proof, we have 
  \begin{equation*}
    \chi_\tau^*(e^{2\Psi_\tau})\sim(\log|\tau|^{-1})^pdV.
  \end{equation*}
  \begin{equation*}
    \chi_\tau^*( i^ndz_0\wedge d\bar{z}_0
    \wedge\dots\wedge
    \widehat{dz_j\wedge d\bar{z}_j}
    \wedge\dots\wedge
    dz_n\wedge dz_n)
    \sim
    (\log|\tau|^{-1})^p\prod_{0\le l\le p, l\ne j}|z_l|^2dV.
    \end{equation*}
    Thus~\eqref{e201} holds, which completes the proof.
  \end{proof}
%
%
%%%%%%%%%%%%%%%%%%%%%%%%%%%%%%%%%%%%%%%%%%%%%%%%%%%%%%%%%%%%%%%%%%%
%
%
%\newpage
\section{The logarithmic setting}\label{S110}
In this section we extend, for completeness, 
Theorem~\ref{thm:asymfunc}---and hence Theorem~A
and Corollary~B---to the logarithmic setting.
We will also relax the positivity assumptions used.
Our conventions and notation largely follow~\cite{BBEGZ}.
%
%%%%%%%%%%%%%%%%%%%%%%%%%%%%%%%%%%%%%%%%%%%%%%%%%%%%%%%%%%%%%%%%%%%
%
\subsection{Preliminaries}\label{S202}
If $X$ is a normal projective variety of dimension $n$, and
$\phi_1,\dots,\phi_n$ are smooth metrics on $\Q$-line bundles 
$L_1,\dots,L_n$ on $X$, then we define 
$dd^c\phi_1\wedge\dots\wedge dd^c\phi_n$ as the pushforward of the measure
$dd^c\phi_1|_{X_{\reg}}\wedge\dots\wedge dd^c\phi_n|_{X_{\reg}}$ from $X_{\reg}$ to $X$.
This is a signed Radon measure of total mass $(L_1\cdot\ldots\cdot L_n)$,
positive if the $\phi_i$ are semipositive.

\smallskip
A \emph{boundary} on $X$ is a Weil $\Q$-divisor $B$ on $X$ such that the Weil $\Q$-divisor 
class
\begin{equation*}
  K_{(X,B)}:=K_X+B
\end{equation*}
is $\Q$-Cartier. Note that $B$ is not necessarily effective. We call $(X,B)$ a \emph{pair}.

The \emph{log discrepancy} of a divisorial valuation $v=c\ord_F$ with respect to 
$(X,B)$ is defined as in~\S\ref{S201}, using
$A_{(X,B)}(v)=c(1+\ord_F(K_{Y/(X,B)}))$. The pair $(X,B)$ is \emph{subklt} if 
$A_{(X,B)}(v)>0$ for all (nontrivial) divisorial valuations $v$.
(It is klt when $B$ is further effective.)

A pair $(X,B)$ is \emph{log smooth} if $X$ is smooth and $B$ has simple normal 
crossing support. 
A \emph{log resolution} of $(X,B)$ is a projective birational morphism 
$f\colon X'\to X$, with $X'$ smooth, such that $\Exc(f)+f^{-1}_*(B)$ has 
simple normal crossing support. 
In this case, there is a unique snc divisor $B'$ on $X'$ such that $f_*B'=B$
and $K_{(X',B')}=f^*K_{(X,B)}$. In particular the pair $(X',B')$ is log smooth.
The pair $(X,B)$ is subklt iff $(X',B')$ is subklt, and the latter is equivalent to $B'$
having coefficients $<1$.

A smooth metric $\p$ on $K_{(X,B)}$ canonically defines a smooth positive measure
$\mu_\p$ on $X_{\reg}\setminus B$ as follows. Let $\phi_B$ be the canonical singular 
metric on $\cO_{X_{\reg}}(B)$, with curvature current given by $[B]$. 
Then $\p-\phi_B$ is a smooth metric on 
$K_{X_{\reg}\setminus B}$, and hence induces a smooth positive measure
\begin{equation*}
  \mu_\p:=e^{2(\p-\phi_B)}
\end{equation*}
on $X_{\reg}\setminus B$.
The fact that $(X,B)$ is subklt means precisely that the total mass of $\mu_\p$
is finite. Thus we can view $\mu_\p$ as a finite positive measure on $X$ that is smooth
on $X_{\reg}\setminus B$ and gives no mass to $B$ or $X_{\sing}$. 
%
%%%%%%%%%%%%%%%%%%%%%%%%%%%%%%%%%%%%%%%%%%%%%%%%%%%%%%%%%%%%%%%%%%%
%
\subsection{Archimedean functionals}\label{S203}
Let $X$ be a normal complex projective variety of dimension $n$.
Fix a big and nef $\Q$-line bundle $L$ on $X$ and set $V:=(L^n)>0$.
For a smooth metric $\phi$ on $L$, set $\MA(\phi)=V^{-1}(dd^c\phi)^n$.

Fix a smooth positive reference metric $\phi_\re$ on $L$
The energy functionals $E$, $I$ and $J$ are defined on smooth metrics on $L$
exactly as in~\eqref{equ:E},~\eqref{equ:I} and~\eqref{equ:J}, respectively;
they are normalized by $E(\phi_\re)=I(\phi_\re)=J(\phi_\re)=0$.
The functionals $I$ and $J$ are translation invariant, whereas 
$E(\phi+c)=E(\phi)+c$. All three functionals are pullback invariant in the following 
sense. Let $q\colon X'\to X$ be a birational morphism, with $X'$ normal and projective, 
and set $L':=q^*L$.
For any smooth metric $\phi$ on $L$, we have $E(\phi')=E(\phi)$, 
$I(\phi')=I(\phi)$ and $J(\phi')=J(\phi)$, where $\phi'=q^*\phi$ and where
the functionals are computed with
respect to the reference metric $\phi'_\re:=q^*\phi_\re$. 

Now consider a boundary $B$ on $X$. Set $\bar{S}_B:=-nV^{-1}(K_{(X,B)}\cdot L^{n-1})$
and fix a smooth reference
metric $\p_\re$ on $K_{(X,B)}$. When $X$ is smooth and $B=0$, we could pick
$\p_\re=\frac12\log\MA(\phi_\re)$, 
but in general, there seems to be no canonical way to get $\p_\re$ from $\phi_\re$.

The analogue of the Ricci energy $R$ is defined on smooth metrics $\phi$ on $L$ by
\begin{equation*}
  R_B(\phi)
  :=\sum_{j=0}^{n-1}\frac1V
  \int_{X_{\reg}}(\phi-\phi_\re)dd^c\p_\re\wedge(dd^c\phi)^j\wedge(dd^c\phi_\re)^{n-1-j}.
\end{equation*}
It satisfies $R_B(\phi+c)=R_B(\phi)-\bar{S}_Bc$ and is 
pullback invariant in the following sense.
Suppose $q\colon X'\to X$ is a birational morphism, with $X'$
projective normal, and define $B'$ by $q_*B'=B$ and $q^*K_{(X,B)}=K_{(X',B')}$.
Set $\phi'_\re=q^*\phi_\re$ and $\p'_\re:=q^*\p_\re$.
Then $R_B(\phi)=R_{B'}(\phi')$, where $\phi'=q^*\phi$.

Now assume $(X,B)$ is subklt and 
let $\mu_\re=\mu_{\p_\re}$ be the finite positive measure defined in~\S\ref{S202}.  It is smooth and positive on $X_{\re}\setminus B$, and may be assumed to have mass $1$, after adding a constant to $\p_\re$. 
For a smooth semipositive metric $\phi$ on $L$, set
\begin{equation*}
  H_B(\phi)
  :=\frac12\int_{X_{\reg}}\log\frac{\MA(\phi)}{\mu_\re}\MA(\phi)
  =\frac12\int_{X_{\reg}}\log\frac{\MA(\phi)}{e^{2(\p_\re-\phi_B)}}\MA(\phi).
\end{equation*}
We may have $H_B(\phi_\re)\ne0$. However,
$H_B$ is bounded from below and translation invariant. 
It is also pullback invariant in the sense above, with reference measure 
$\mu'_\re=\mu_{\p'_\re}$ on $X'$.
\begin{lem}\label{L201}
  If $\phi$ is a smooth semipositive metric on $L$, then 
  $H_B(\phi)<+\infty$.
\end{lem}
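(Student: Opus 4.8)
The plan is to reduce to the log smooth case by pullback invariance and then estimate the two natural pieces of the entropy integral separately. First I would invoke a log resolution $q\colon X'\to X$ of $(X,B)$ together with the pullback invariance of $H_B$ recorded just above the statement: writing $B'$ for the snc boundary determined by $q_*B'=B$ and $q^*K_{(X,B)}=K_{(X',B')}$, one has $H_B(\phi)=H_{B'}(\phi')$ with $\phi':=q^*\phi$ a smooth semipositive metric on the big and nef $\Q$-line bundle $L':=q^*L$, and with reference measure $\mu'_\re=\mu_{\p'_\re}$. Since $(X',B')$ is again subklt and log smooth, this reduces the statement to the case where $X$ is smooth and $B=\sum_i b_iB_i$ has snc support with all $b_i<1$.

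Next I would fix a smooth positive volume form $dV$ on $X$ and write $\MA(\phi)=g\,dV$. Because $\phi$ is a smooth metric on $L$ over the smooth compact variety $X$, the form $(dd^c\phi)^n$ is smooth, so $g\ge 0$ is smooth and bounded, say $0\le g\le G$. Writing $h:=\mu_\re/dV$ for the density of $\mu_\re$ (smooth and positive off $B$), the defining integral becomes
\begin{equation*}
  2H_B(\phi)=\int_X g\log g\,dV-\int_X g\,(\log h)\,dV,
\end{equation*}
and it suffices to bound each term from above. The first is finite because $t\mapsto t\log t$ is bounded on $[0,G]$ and $X$ is compact. For the second, the key local input is that, in snc coordinates $(z_i)$ near $B$ in which $\phi_B=\sum_i b_i\log|z_i|+O(1)$, one has $\log h=-2\phi_B+O(1)=-2\sum_i b_i\log|z_i|+O(1)$; hence $\log h\in L^1(X)$ since each $\log|z_i|$ is locally integrable and $X$ is compact. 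As $0\le g\le G$, this gives $\bigl|\int_X g\,(\log h)\,dV\bigr|\le G\int_X|\log h|\,dV<+\infty$, and therefore $H_B(\phi)<+\infty$.

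The only genuine point is the behaviour near $B$ (equivalently, near the boundary and exceptional divisors produced by the log resolution), where $\mu_\re$ is singular, so the main obstacle is to control $\log h$ there. This is exactly where the snc structure enters, reducing the estimate to the local integrability of the logarithmic weights $\log|z_i|$. I would also note that only the logarithmic nature of the poles of $\mu_\re$, and not the finiteness of its total mass, is used for the upper bound; the subklt condition $b_i<1$ is instead what makes $H_B$ a genuine relative entropy, and hence bounded below via Jensen's inequality applied to the probability measures $\MA(\phi)$ and $\mu_\re$. I expect no further difficulty beyond this local analysis near the boundary.
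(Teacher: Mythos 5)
Your proof is correct and follows essentially the same route as the paper's: reduce to the log smooth case by pullback invariance, then observe that the entropy integrand is locally integrable near $B$ because the density of $\mu_\re$ has only logarithmic singularities of the form $\prod_i|z_i|^{-2b_i}$ with $b_i<1$, the density of $\MA(\phi)$ is smooth and bounded, and $\log|z_i|$ is locally integrable. Your splitting of the integral into $\int g\log g\,dV$ and $\int g\log h\,dV$ is only a cosmetic reorganization of the paper's single local estimate, so no further comment is needed.
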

\begin{proof}
  By pullback invariance we may assume that $(X,B)$ is log smooth.
  In this case $\MA(\phi)$ and $\mu_\re$ are smooth measures on $X$
  that are strictly positive on $X_{\reg}$. Consider any point $\xi\in B$ and pick
  local coordinates $(z_1,\dots,z_n)$ at $\xi$ such that the irreducible components
  of $B$ are given by $\{z_i=0\}$, $0\le i\le p$. Fix a volume form $dV$ near $\xi$.
  Then $\mu_\re=g\prod_{i=0}^p|z_i|^{2a_i}dV$, and $\MA(\phi)=hdV$, with 
  $a_i>-1$, $g>0$ and $h\ge0$ smooth. If $f=h\log(\frac{h}{g}\prod_{i=0}^p|z_i|^{-2a_i})$, then 
  $f$ is locally integrable with respect to $dV$. This completes the proof.
\end{proof}
As in~\S\ref{S101} we define the Mabuchi functional on semipositive smooth metrics by
\begin{equation*}
  M_B:=H_B+R_B+\bar{S}_BE.
\end{equation*}
Then $M_B$ is translation invariant and pullback invariant in the sense above.
At least formally, the critical points of $M_B$ satisfy
\begin{equation*}
  n(\Ric(dd^c\phi)-[B])\wedge(dd^c\phi)^{n-1}
  =\bar{S}_B(dd^c\phi)^n
\end{equation*}
and should be conical cscK metrics, see~\cite{Li14}.

\medskip
Finally consider the (weak) \emph{log Fano case}, in which $L:=-K_{(X,B)}$ is big and nef.
The Ding functional is then defined on smooth metrics as $D_B=L_B-E$, with 
\begin{equation*}
  L_B(\phi):=-\frac12\log\int_{X_{\reg}}e^{-2(\phi+\phi_B)}.
\end{equation*}
If we use $\p_\re=-\phi_\re$, then 
the formula for the Mabuchi functional simplifies to
\begin{equation*}
  M_B(\phi)=H_B(\phi)-(E(\phi)-\int_{X_\mathrm{\reg}}(\phi-\phi_\re)\MA(\phi)).
\end{equation*}
We have $D_B\le M_B$ on smooth semipositive metrics.
% 
%%%%%%%%%%%%%%%%%%%%%%%%%%%%%%%%%%%%%%%%%%%%%%%%%%%%%%%%%%%%%%%%%%%
%
\subsection{Non-Archimedean functionals}
The extensions of the non-Archimedean functionals in~\S\ref{S111} to the
logarithmic setting were studied in~\cite[\S7]{BHJ1}. Let us briefly review them.

Consider a normal complex projective variety $X$ and a big and nef $\Q$-line bundle
$L$ on $X$.
Let $\phi$ be a non-Archimedean metric on $L$, represented by a normal
test configuration $(\cX,\cL)$ for $(X,L)$, that we assume dominates 
$(X\times\C,L\times\C)$ via $\rho\colon\cX\to X\times\C$.
The formulas in~\S\ref{S111} for $E^\NA(\phi)$, $I^\NA(\phi)$ and $J^\NA(\phi)$ 
are still valid. 

Given a boundary $B$ on $X$ we set
\begin{align*}
  R_B^{\NA}(\phi)
  :&=V^{-1}(\psi_\triv\cdot\phi^n)\\
  &=V^{-1}\left(\rho^*K^\lo_{(X\times\P^1,B\times\P^1)/\P^1}\cdot\bar\cL^n\right).
\end{align*}

Now assume $(X,B)$ is subklt and let 
$\cB$ (resp.\ $\bar\cB$) be the (component wise) Zariski closure of $B\times\C^*$
in $\cX$ (resp.\ $\bar\cX$).
Then 
\begin{align*}
  H^{\NA}_B(\phi)
  :&=\int_{\Xdiv}A_{(X,B)}(v)\MA^{\NA}(\phi)\\
   &=V^{-1}\left(K^\lo_{(\bar\cX,\bar\cB)/\P^1}\cdot\bar\cL^n\right)
     -V^{-1}\left(\rho^*K^\lo_{(X\times\P^1,B\times\P^1)/\P^1}\cdot\bar\cL^n\right).
\end{align*}
and
\begin{align*}
  M_B^{\NA}(\phi)
  :&=H_B^{\NA}(\phi)+R_B^{\NA}(\phi)+\bar S_B E^{\NA}(\phi)\\
   &=\frac1V\left(K^\lo_{(\bar\cX,\bar\cB)/\P^1}\cdot\bar\cL^n\right) 
     +\frac{\bar S_B}{(n+1)V}\left(\bar\cL^{n+1}\right). 
\end{align*}
While the definitions of $H_B^{\NA}(\phi)$ and $M_B^{\NA}(\phi)$
make sense for arbitrary non-Archimedean metrics $\phi$,
we will usually assume that $\phi$ is semipositive.

All the functionals above have the same invariance properties as their
Archimedean cousins. They are also homogeneous in the sense of Definition~\ref{D201}.

\medskip
Finally, when $(X,B)$ is weakly log Fano, so that $(X,B)$ is subklt and 
$L:=-K_{(X,B)}$ is big and nef,
the non-Archimedean Ding functional is defined by
\begin{equation*}
  D_B^\NA(\phi)=L_B^\NA(\phi)-E^\NA(\phi),
\end{equation*}
where 
\begin{equation*}
  L_B^\NA(\phi)
  =\inf_v\left(A_{(X,B)}(v)+(\phi-\phi_{\triv})(v)\right),
\end{equation*}
the infimum taken over all valuations $v$ on $X$ that are divisorial
or trivial.

The Ding functional $D_B^\NA$ is translation invariant
and pullback invariant.
The formula for the Mabuchi functional simplifies in the log Fano case to
\begin{equation*}
  M^\NA_B(\phi)=H^\NA_B(\phi)-(E^\NA(\phi)-\int_{\Xdiv}(\phi-\phi_\re)\MA^\NA(\phi)).
\end{equation*}
We have $D_B^\NA\le\min\{M_B^\NA, J^\NA\}$ on semipositive metrics, 
see~Propositions~7.28 and~7.32 in~\cite{BHJ1}. 
%
%%%%%%%%%%%%%%%%%%%%%%%%%%%%%%%%%%%%%%%%%%%%%%%%%%%%%%%%%%%%%%%%%%%
%
\subsection{Asymptotics}
The following result generalizes Theorem~\ref{thm:asymfunc}
and shows that if $F$ is one of the functionals $E$, $I$, $J$, $H_B$, $R_B$ or $M_B$
on $\cH$, then $F$ admits a non-Archimedean limit on $\cH^{\NA}$ given by $F^\NA$.
For future reference, we state the result in detail.
\begin{thm}\label{T201}
  Let $X$ be a normal projective variety, $L$ a big and nef $\Q$-line bundle on $X$,
  and $(\cX,\cL)$ a test configuration for $(X,L)$ inducing a non-Archimedean
  metric $\phi^\NA$ on $L$. 
  Further, let $\Phi$ be a smooth, $S^1$-invariant metric on $\cL$ near $\cX_0$,
  inducing a smooth ray $(\phi^s)_{s>s_0}$ of metrics on $L$. 
  Fix a smooth reference metric $\phi_\re$ on $L$.
  Then 
  \begin{equation}\label{e202}
    \lim_{s\to+\infty}\frac{F(\phi^s)}{s}=F^\NA(\phi^\NA),
  \end{equation}
  where $F$ is any of the functionals $E$, $I$, $J$.
  
  Further, if $B$ is a boundary on $X$ and $\p_\re$ is a smooth reference metric 
  on $K_{(X,B)}$, then~\eqref{e202} also holds for $F=R_B$.
  Finally, if $(X,B)$ is subklt and $\Phi$ is semipositive, then~\eqref{e202} 
  holds for $F=H_B$ and $F=M_B$.
\end{thm}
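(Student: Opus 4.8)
The plan is to rerun the Deligne-pairing proof of Theorem~\ref{thm:asymfunc} in the present generality, the only genuinely new ingredients being logarithmic strengthenings of the volume estimate Lemma~\ref{lem:estim} and of the entropy bound Lemma~\ref{L202}. Since each of the six functionals, and each of its non-Archimedean counterpart, is invariant under a pullback $q\colon X'\to X$ with $B'$ determined by $q_*B'=B$ and $q^*K_{(X,B)}=K_{(X',B')}$, I would first replace the data by a simultaneous log resolution. After this reduction $X$ is smooth, $(X,B)$ is log smooth with all coefficients $<1$, $\cX$ is an snc test configuration, and the closure $\cB$ of $B\times\C^*$ is such that $\cX_{0,\red}+\cB$ has simple normal crossing support; in particular $K^\lo_{(\cX,\cB)/\C}$ is a genuine line bundle. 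Semipositivity of $\Phi$ and the number $F^\NA(\phi^\NA)$ are unchanged, and bigness and nefness of $L$ are used only through $V=(L^n)>0$.

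For $F\in\{E,I,J\}$ the argument is identical to the smooth case: the Deligne-pairing identities of Lemma~\ref{lem:funcdel} hold on the smooth model, and Lemma~\ref{lem:deligne}---which assumes nothing about positivity of the line bundles---identifies $\lim_{s\to+\infty}F(\phi^s)/s$ with the intersection number defining $F^\NA$. For $F=R_B$ one applies the same lemma to the tuple $(\rho^*K_{(X,B)},\cL,\dots,\cL)$, viewing the fixed reference metric $\p_\re$ on $K_{(X,B)}$ as the constant ray attached to the pullback metric $\rho^*\p_\re$; the weight returned by Lemma~\ref{lem:deligne} is exactly $VR_B^\NA(\phi^\NA)$.

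Since $M_B=H_B+R_B+\bar S_B E$, it remains to treat the combination $H_B+R_B$, after which $H_B$ follows by subtracting the established limit of $R_B$ and $M_B$ by the Chen--Tian formula. The logarithmic form of Lemma~\ref{lem:funcdel} reads $V(H_B(\phi^s)+R_B(\phi^s))=\langle\psi^s,\phi^s,\dots,\phi^s\rangle_X-\langle\p_\re,\phi_\re,\dots,\phi_\re\rangle_X$, where $\psi^s$ is the metric on $K_{(X,B)}$ with $\mu_{\psi^s}=\MA(\phi^s)$. Fixing a smooth $S^1$-invariant reference metric $\Psi_\re$ on $K^\lo_{(\cX,\cB)/\C}$ with induced ray $(\psi^s_\re)$, Lemma~\ref{lem:deligne} gives $\langle\psi^s_\re,\phi^s,\dots,\phi^s\rangle_X-\langle\p_\re,\phi_\re,\dots,\phi_\re\rangle_X=s(K^\lo_{(\bar\cX,\bar\cB)/\P^1}\cdot\bar\cL^n)+O(1)$, which is $sV(H_B^\NA+R_B^\NA)(\phi^\NA)$. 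By the change-of-metric formula~\eqref{equ:change} the leftover difference is $\tfrac12\int_{X_\reg}\log[\MA(\phi^s)/\mu_{\psi^s_\re}](dd^c\phi^s)^n$, so the theorem is reduced to the logarithmic analogue of Lemma~\ref{L202}:
\begin{equation*}
  \int_{X_\reg}\log\left[\frac{\MA(\phi^s)}{\mu_{\psi^s_\re}}\right]\MA(\phi^s)=O(\log s).
\end{equation*}

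This two-sided bound is the heart of the matter, and is where subkltness enters. In the snc chart $z_0^{b_0}\cdots z_p^{b_p}=\e\tau$ the boundary components are transverse to $\cX_0$, so $\mu_{\Psi_\tau}$ differs from the volume form of Lemma~\ref{lem:estim} by a factor $\prod_{j}|z_j|^{-2c_j}$ on the polydisc directions; since $c_j<1$ this factor is integrable and changes the mass only by a finite constant, giving $\int_{\cX_\tau}\mu_{\Psi_\tau}\sim(\log|\tau|^{-1})^d$ and hence, by nonnegativity of relative entropy, the lower bound $-O(\log s)$. For the upper bound I would, as in Lemma~\ref{L202}, bound the density $g_\tau:=(dd^c\Phi|_{\cX_\tau})^n/\mu_{\Psi_{\re,\tau}}$ and obtain $\log g_\tau\le O(1)+2\sum_j c_j\log|s_j|$, where $s_j$ is a global section cutting out the $j$-th boundary component. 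The crux---and the step I expect to be hardest---is that when $B$ is not effective some $c_j<0$, so $g_\tau$ is no longer bounded near $\cB$ and the naive supremum estimate of Lemma~\ref{L202} fails. One must instead observe that each $\tau\mapsto\int_{\cX_\tau}\log|s_j|(dd^c\Phi)^n$ is a Deligne-pairing fiber integral of a quasi-psh weight, hence continuous up to $\tau=0$ by Stoll's theorem~\cite[Theorem 4.9]{Stol} and therefore uniformly bounded. This absorbs the boundary contribution into an $O(1)$ term, yielding the upper bound and completing the estimate, and with it the proof.
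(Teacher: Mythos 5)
Your proposal is correct, and for the only delicate part of the theorem---the combination $H_B+R_B$---it takes a genuinely different route from the paper's. The preliminary steps coincide: reduction by pullback invariance to a log smooth, snc situation, and Lemma~\ref{lem:deligne} for $E$, $I$, $J$ and $R_B$. For $H_B+R_B$ the paper does \emph{not} redo any analysis: using the inductive formula~\eqref{equ:metrind} it converts the boundary term $\sum_i c_i\int_X\log|\sigma_i|_{\p_i}(dd^c\phi)^n$ into $\sum_i c_i\langle\phi^n\rangle_{B_i}$ (the pairings $\langle\p_i,\phi^n\rangle_X$ cancel against $\langle\p_B,\phi^n\rangle_X$), yielding $V(H_B+R_B)(\phi)=V(H+R)(\phi)+n\sum_ic_i(L^{n-1}\cdot B_i)E(\phi|_{B_i})+O(1)$ together with the matching identity on the non-Archimedean side, and then simply invokes the already-proved Theorem~\ref{thm:asymfunc} on $X$ and the asymptotics of $E$ on each $B_i$ for the induced test configurations $\cL|_{\cB_i}$. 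You instead keep the boundary inside a smooth reference metric on $K^\lo_{(\cX,\cB)/\C}$ and prove the logarithmic analogue of Lemma~\ref{L202} directly, which forces you to confront two analytic points the paper's bookkeeping sidesteps: the mass estimate $\int_{\cX_\tau}\mu_{\Psi_\tau}\sim(\log|\tau|^{-1})^d$, where subkltness ($c_j<1$) gives integrability of the extra factor $\prod_j|z_j|^{-2c_j}$ in the polydisc directions, and the failure of the pointwise density bound when some $c_j<0$, which you correctly repair by observing that the offending contribution is exactly $2\sum_jc_j\int_{\cX_\tau}\log|s_j|_{\p_j}(dd^c\Phi|_{\cX_\tau})^n$, a fiber integral of a quasi-psh weight that is continuous up to $\tau=0$ by Stoll's theorem and hence $O(1)$. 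These two devices are in substance equivalent: your Stoll bound amounts to the cancellation of the linear-in-$s$ terms in $\langle(\phi^s)^n\rangle_{B_j}-\langle\p_j^s,(\phi^s)^n\rangle_X$, both of slope $(\bar\cB_j\cdot\bar\cL^n)$, which is precisely what the paper keeps explicit on both sides of its identity. What your route buys is a self-contained, quantitative conclusion (the log-entropy term is $O(\log s)$ rather than merely $o(s)$) at the cost of rerunning the estimates of Lemmas~\ref{lem:estim} and~\ref{L202}; the paper's route buys economy, reducing the logarithmic case entirely to the smooth case already established.
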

In addition, Berman proved that in the log Fano case,
the Ding functional $D_B$ admits $D_B^\NA$ as non-Archimedean limit.
Indeed, the following result follows from Proposition~3.8 and~\S4.3 in~\cite{Berm16}.
\begin{thm}\label{T202}
  Let $(X,B)$ be a subklt pair with $L:=-K_{(X,B)}$ big and nef,
  $(\cX,\cL)$ a test configuration for $(X,L)$ inducing a non-Archimedean
  metric $\phi^\NA$ on $L$, and 
  $\Phi$ a semipositive smooth, $S^1$-invariant metric on $\cL$ near $\cX_0$,
  inducing a smooth ray $(\phi^s)_{s>s_0}$ of semipositive metrics on $L$. 
  Then  $\lim_{s\to+\infty}\frac1sD_B(\phi^s)=D_B^\NA(\phi^\NA)$.
\end{thm}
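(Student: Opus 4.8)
The plan is to follow Berman~\cite{Berm16} and reduce the statement to its $L_B$-part. Since $D_B=L_B-E$ and $D_B^\NA=L_B^\NA-E^\NA$, and since Theorem~\ref{T201} already yields $\lim_{s\to+\infty}s^{-1}E(\phi^s)=E^\NA(\phi^\NA)$ (the hypotheses there only require $L$ big and nef), it suffices to prove
\begin{equation*}
  \lim_{s\to+\infty}\frac1s L_B(\phi^s)=L_B^\NA(\phi^\NA).
\end{equation*}
Writing $\tau=e^{-s}$ and recalling $L_B(\phi^s)=-\half\log\int_{X_\reg}e^{-2(\phi^s+\phi_B)}$, the metric $\phi^s$ is, under the $\C^*$-action isomorphism $\cX_\tau\simeq X$, the pullback of $\Phi|_{\cX_\tau}$. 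Hence the left-hand side is governed by the logarithmic rate of the fiber integral $\int_{\cX_\tau}\mu_\tau$ as $\tau\to0$, where $\mu_\tau$ is the finite subklt measure on $\cX_\tau$ attached to the metric $-\Phi|_{\cX_\tau}$ on $K_{(\cX_\tau,\cB_\tau)}$, and I must show
\begin{equation*}
  -\half\log\int_{\cX_\tau}\mu_\tau=s\,L_B^\NA(\phi^\NA)+o(s).
\end{equation*}

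First I would pass to a good model. Replacing $(\cX,\cL)$ by a log resolution, I may assume $\cX$ smooth, $\cX_0+\cB$ of simple normal crossing support, and (after a further blow-up) $\cL=\rho^*(L\times\C)+D$ for a $\Q$-divisor $D$ supported on $\cX_0$; this is harmless since $L_B$ is pullback invariant, the subklt volume being a birational invariant once discrepancies are accounted for, and since $\int_{\cX_\tau}\mu_\tau$ is intrinsic. On such a model, smoothness of $\Phi$ makes it comparable, up to a continuous $S^1$-invariant factor bounded above and below uniformly in $\tau$, to a fixed smooth $S^1$-invariant reference metric $\Phi_0$ on $\cL$; replacing $\mu_\tau$ by the reference measure $e^{-2(\Phi_0+\Phi_\cB)}|_{\cX_\tau}$ changes $\int_{\cX_\tau}\mu_\tau$ only by bounded factors, so the question reduces to this model measure. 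By a partition of unity over the compact central fiber $\cX_0$, it then suffices to estimate the contribution of a single coordinate chart.

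In a chart I would use the logarithmic polar coordinates from the proof of Lemma~\ref{lem:estim}: coordinates $(z_0,\dots,z_n)$ with $z_0^{b_0}\cdots z_p^{b_p}=\e\tau$, the boundary and exceptional components occurring among the $\{z_j=0\}$, and the parametrization $\chi_\tau\colon\sigma\times T\times\bbD^{n-p}\to U\cap\cX_\tau$, where $U=\{|z_i|\le1\}$ is the coordinate polydisc. Computing $\chi_\tau^*$ of the reference measure as in that proof, but now carrying the boundary exponents $a_i\in(-1,0]$ supplied by the subklt hypothesis (exactly those appearing in Lemma~\ref{L201}) together with the exponents coming from $D$, shows the chart integral to be comparable to $|\tau|^{2c}(\log|\tau|^{-1})^{k}$ for an explicit exponent $c$ and integer $k\ge0$. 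The bookkeeping of~\S\ref{S201} and~\cite{BHJ1}---the relation $A_{(X\times\C,\cB)}(\ord_E)=b_E\bigl(1+A_{(X,B)}(v_E)\bigr)$ together with $(\phi^\NA-\phi_\triv)(v_E)=b_E^{-1}\ord_E(D)$---identifies $c$ with $\min_j\bigl(A_{(X,B)}(v_{E_j})+(\phi^\NA-\phi_\triv)(v_{E_j})\bigr)$ over the components $E_j$ meeting the chart. Summing the chart estimates, the global leading exponent is the minimum over all components $E$ of $\cX_0$, so that $-\half\log\int_{\cX_\tau}\mu_\tau=s\,\min_E\bigl(A_{(X,B)}(v_E)+(\phi^\NA-\phi_\triv)(v_E)\bigr)+O(\log s)$. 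Finally I would appeal to~\cite{BHJ1}: on a log smooth model adapted to $\cL$ the function $v\mapsto A_{(X,B)}(v)+(\phi^\NA-\phi_\triv)(v)$ is affine on the faces of the dual complex, so its infimum over all divisorial valuations is attained at a vertex, i.e. at some $v_E$; this equates the computed minimum with $L_B^\NA(\phi^\NA)$ and proves the claim.

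The hard part is the sharp two-sided control of the local integrals in the presence of the boundary. Finiteness of $\int_{\cX_\tau}\mu_\tau$ for each fixed $\tau$ is Lemma~\ref{L201} applied fiberwise, but isolating the exact leading power of $\log|\tau|^{-1}$---and, crucially, verifying that the remaining contributions are genuinely $o(s)$ after applying $-\half\log$---requires the comparison constants in $C^{-1}\le\chi_\tau^*\mu_\tau/(\text{model})\le C$ to be uniform as $\tau\to0$ and uniform over the finitely many charts. The boundary exponents $a_i\in(-1,0]$ turn each model integral into a product of incomplete-Beta-type integrals whose $\tau$-dependence must be tracked with care, and it is precisely here that the subklt assumption $a_i>-1$ is indispensable for integrability. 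Once this uniform Laplace-type asymptotic for $\int_{\cX_\tau}\mu_\tau$ is established, combining it with the $E$-asymptotics of Theorem~\ref{T201} yields the theorem; see~\cite{konsoib} for sharper versions of such integral asymptotics.
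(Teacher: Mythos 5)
Your outline is essentially correct, but you should be aware that the paper does not actually prove Theorem~\ref{T202}: it is quoted from Berman, the text deriving it from Proposition~3.8 and~\S4.3 of~\cite{Berm16} (see also~\cite[Theorem~3.1]{BBJ15}). What you have written is therefore not an alternative to an argument in this paper but a reconstruction of Berman's proof. The skeleton is right: splitting $D_B=L_B-E$ and invoking Theorem~\ref{T201} for $E$ reduces everything to the slope of $L_B$, and the asymptotics of $\int_{\cX_\tau}e^{-2(\Phi+\phi_{\cB})|_{\cX_\tau}}$ are indeed governed, on an snc model with $\cL=\rho^*(L\times\C)+D$, by a Laplace-type estimate in the logarithmic polar coordinates of Lemma~\ref{lem:estim}, producing $|\tau|^{2c}(\log|\tau|^{-1})^{k}$ with $c$ the minimum over the simplex $\sigma$ of a linear function whose values at the vertices are $A_{(X,B)}(v_E)+(\phi^\NA-\phi_\triv)(v_E)$. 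Two steps in your sketch carry more weight than you give them. First, identifying the resulting slope with $L_B^\NA(\phi^\NA)=\inf_v\bigl(A_{(X,B)}(v)+(\phi^\NA-\phi_\triv)(v)\bigr)$, where the infimum runs over \emph{all} divisorial valuations and not merely the $v_E$ attached to components of $\cX_0$ on your chosen model, is not just ``affineness on faces of the dual complex''; it is the nontrivial fact that this infimum is computed as a log canonical threshold on a single log resolution, which is established in~\cite[\S7.7]{BHJ1} and must be cited, not re-derived by hand-waving. Second, the ``hard part'' you defer --- uniform two-sided comparison of $\chi_\tau^*$ of the measure with $|\e\tau|^{2\sum_jc_jw_j}$ times the model volume on $\sigma\times T\times\bbD^{n-p}$, with the horizontal boundary factors included --- is the actual content of the proof and is what~\cite{Berm16} and~\cite{konsoib} supply; note also that since $B$ is only assumed to be a (not necessarily effective) subklt boundary, the exponents there satisfy $a_i>-1$ but need not lie in $(-1,0]$ as you wrote, though this changes nothing. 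Finally, as the remark following the theorem indicates, semipositivity of $\Phi$ is never used in this argument; smoothness (or even local boundedness) near $\cX_0$ suffices.
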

In fact, it is enough to assume $\Phi$ is semipositive and locally bounded in 
Theorem~\ref{T202}.
\begin{rmk}
  Theorems~\ref{T201} and~\ref{T202}
  remain true even when $\Phi$ is not $S^1$-invariant, in the following 
  sense. For $\tau\in\D^*$, let $\phi_\tau$ be the metric on $L$ defined as the 
  pullback of $\Phi|_{\cX_\tau}$ under the $\C^*$-action.
  Then we have $\lim_{\tau\to0}(\log|\tau|^{-1})^{-1}F(\phi_\tau)=F^\NA(\phi^\NA)$. 
\end{rmk}
% 
%%%%%%%%%%%%%%%%%%%%%%%%%%%%%%%%%%%%%%%%%%%%%%%%%%%%%%%%%%%%%%%%%%%
%
\subsection{Proof of Theorem~\ref{T201}}
By pullback invariance, we may assume that $X$ is smooth. After further pullback, we may 
also assume that $\cX$ is smooth and dominates $X\times\C$. In this case, the 
asymptotic formulas for $E$, $I$ and $J$ follow immediately from Lemma~\ref{lem:deligne}. 

When considering the remaining functionals, we may similarly, by pullback invariance, 
assume that the pair $(X,B)$ is log smooth.
The asymptotic formula for $R_B$ now follows from Lemma~\ref{lem:deligne}
since we can express $R_B(\phi)$ in terms of Deligne pairings:
\begin{equation*}
  R_B(\phi)
  =\langle\p_\re,\phi^n\rangle_X
    -\langle\p_\re,\phi_\re^n\rangle_X,
\end{equation*}
whereas the non-Archimedean counterpart is given by the intersection number
\begin{equation*}
  R_B^{\NA}(\phi)
  =V^{-1}\left(\rho^*K^\lo_{(X\times\P^1,B\times\P^1)/\P^1}\cdot\bar\cL^n\right)_{\bar\cX}.
\end{equation*}

Finally we consider the functionals $H_B$ and $M_B$. Thus assume $(X,B)$ 
is log smooth and subklt. We may further assume that 
the divisor $\cX_0+\cB$ has simple normal crossing support, 
where $\cB$ is the (component-wise) Zariski closure of the pullback of 
$B\times\C^*$ in $\cX$.

As in~\S\ref{S112} it suffices to prove the asymptotic formula for the functional
$H_B+R_B$. To this end, we express $H_B$ in terms of Deligne pairings.
Write $B=\sum_i c_iB_i$, where $B_i$, $i\in I$, 
are the irreducible components of $B$ and $c_i\in\Q$.
Fix a smooth metric $\p_i$ on $\cO_X(B_i)$ for $i\in I$. Then 
$\p_B:=\sum_ic_i\p_i$ is a smooth metric on $\cO_X(B)$, and 
it follows from~\eqref{equ:metrind} that 
\begin{align*}
  VH_B(\phi)
  &=\frac12\int_X\log\frac{\MA(\phi)}{e^{2(\p_\re-\p_B)}}(dd^c\phi)^n
    +\sum_{i\in I}c_i\int_X\log|\sigma_i|_{\p_i}(dd^c\phi)^n\\
  &=\langle\tfrac12\log\MA(\phi),\phi^n\rangle_X
    -\langle\p_\re,\phi^n\rangle_X
    +\langle\p_B,\phi^n\rangle_X
    +\sum_{i\in I}c_i\left(
    \langle\phi^n\rangle_{B_i}-\langle\p_i,\phi^n\rangle_X
    \right)\\
  &=\langle\tfrac12\log\MA(\phi),\phi^n\rangle_X
    -\langle\p_\re,\phi^n\rangle_X
  +\sum_{i\in I}c_i\langle\phi^n\rangle_{B_i},
\end{align*} 
for any smooth semipositive metric $\phi$ on $L$. This implies
\begin{align*}
  V(H_B(\phi)+R_B(\phi))
  &=\langle\tfrac12\log\MA(\phi),\phi^n\rangle_X
    -\langle\p_\re,\phi_\re^n\rangle_X
    +\sum_{i\in I}c_i\langle\phi^n\rangle_{B_i}\\
  &=V(H(\phi)+R(\phi))
    +n\sum_{i\in I}c_i(L^{n-1}\cdot B_i)E(\phi|_{B_i})+O(1).
\end{align*} 

On the non-Archimedean side, we have 
\begin{align*}
  V(H_B^{\NA}(\phi^\NA)+R_B^{\NA}(\phi^\NA))
  &=\left(K^{\log}_{(\bar\cX,\bar\cB)/\P^1}\cdot\bar\cL^n\right)_{\bar\cX}\\
  &=\left(K^{\log}_{\bar\cX/\P^1}\cdot\bar\cL^n\right)_{\bar\cX}
  +\left(\bar\cB\cdot\bar\cL^n\right)_{\bar\cX}\\
  &=V(H^\NA(\phi^\NA)+R^\NA(\phi^\NA))
    +\sum_{i\in I}c_i\left(\bar\cL|_{\bar\cB_i}^n\right)_{\bar\cB_i}\\
  &=V(H^\NA(\phi^\NA)+R^\NA(\phi^\NA))
    +n\sum_{i\in I}c_i(L^{n-1}\cdot B_i)E^\NA(\phi_i^\NA),
\end{align*}
where $\phi_i^\NA$ is the non-Archimedean metric on $L|_{B_i}$ 
represented by $\cL|_{\cB_i}$.

It now follows from Theorem~\ref{thm:asymfunc} that\footnote{While Theorem~\ref{thm:asymfunc} is stated in the case when $L$ and $\cL$ are ample and $\Phi$ is positive, the proof extends to the weaker positivity assumptions used here.}
\begin{equation*}
  \lim_{s\to\infty}\frac1s(H(\phi^s)+R(\phi^s))
  =H^\NA(\phi^\NA)+R(\phi^\NA),
\end{equation*}
Applying Theorem~\ref{thm:asymfunc} on $B_i$ and $\cB_i$, we also get
$\lim_{s\to\infty}\frac1sE(\phi_i^s)=E^\NA(\phi_i^\NA)$.
Thus
\begin{equation*}
  \lim_{s\to\infty}\frac1s(H_B(\phi^s)+R_B(\phi^s))
  =H_B^\NA(\phi^\NA)+R_B(\phi^\NA),
\end{equation*}
which completes the proof of Theorem~\ref{T201}.
% 
%%%%%%%%%%%%%%%%%%%%%%%%%%%%%%%%%%%%%%%%%%%%%%%%%%%%%%%%%%%%%%%%%%%
%
\subsection{Coercivity and uniform K-stability}
Let us finally extend Corollary~B to the logarithmic setting. Consider a pair $(X,B)$
and a big and nef line bundle $L$ on $X$. The Donaldson-Futaki invariant of a 
normal test configuration $(\cX,\cL)$ for $(X,L)$ is given by 
\begin{align*}
  \DF_B(\cX,\cL)
  :&=\frac1{V}(K_{(\bar\cX.\bar\cB)/\P^1}\cdot\bar\cL^n)
     +\bar S_B\frac{(\bar\cL^{n+1})}{(n+1)V}\\
   &=M_B^\NA(\phi)+\frac{1}{V}\left((\cX_0-\cX_{0,\mathrm{red}})\cdot\cL^n\right),
\end{align*}
where $\phi$ is the non-Archimedean metric on $L$ represented by $\phi$.
Now assume $L$ is ample. We then define $(X,B);L)$ to be \emph{uniformly $K$-stable}
if the following two equivalent conditions hold:
\begin{itemize}
\item[(i)]  
  there exists $\d>0$ such that 
  $M_B^{\NA}(\phi)\ge\delta J^{\NA}(\phi)$ for every $\phi\in\cH^{\NA}(L)$;
\item[(ii)]
  there exists $\d>0$ such that 
  $\DF_B(\cX, \cL) \geq \delta J^{\NA}(\cX, \cL)$ for any normal 
  ample test configuration $(\cX, \cL)$. 
\end{itemize} 
The equivalence between the two conditions is proved in~\cite[Proposition~8.2]{BHJ1}.
\begin{cor}\label{C201}
  Let $(X,B)$ be a subklt pair and $L$ an ample line bundle on $X$. 
  Suppose that the Mabuchi functional is coercive in the sense that 
  there exist positive constants $\delta$ and $C$ such that 
  $M_B(\phi)\ge\delta J(\phi)-C$ for every positive smooth metric $\phi$ on $L$.
  Then $((X,B);L)$ is uniformly K-stable; more precisely
  $\DF_B(\cX,\cL)\ge M_B(\phi)\ge \delta J^{\NA}(\phi)$ for every positive non-Archimedean
  metric on $L$, where $(\cX,\cL)$ is the unique normal ample representative of~$\phi$.
\end{cor}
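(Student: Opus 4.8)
The plan is to treat this as the logarithmic counterpart of Corollary~B and to deduce it directly from the asymptotic formulas of Theorem~\ref{T201}, together with the inequality $\DF_B\ge M_B^\NA$ on semipositive metrics. Fix a positive non-Archimedean metric $\phi\in\cH^\NA(L)$ and let $(\cX,\cL)$ be its unique normal ample representative. First I would produce a smooth positive subgeodesic ray $(\phi^s)_{s>s_0}$ in $\cH$ whose non-Archimedean limit is $\phi$. Since $\cL$ is relatively ample over $\C$, a relative Fubini--Study construction yields a smooth $S^1$-invariant metric $\Phi_0$ on $\cL$ near $\cX_0$ whose curvature is strictly positive along the fibers $\cX_\tau$. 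Adding $A\,\pi^*(|\tau|^2)$ for $A\gg1$ leaves the fiberwise positivity intact (the extra term pulls back from the base) while making the total-space curvature $dd^c\Phi_0+A\,\pi^*dd^c|\tau|^2$ semipositive over a sufficiently small disc by compactness; as this modification is bounded, it does not affect the non-Archimedean limit. The resulting metric $\Phi$ is thus semipositive and $S^1$-invariant near $\cX_0$, and the associated ray $(\phi^s)$ consists of positive metrics on $L$ for $s$ large, with non-Archimedean limit $\phi^\NA=\phi$.

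Next I would apply the coercivity hypothesis along this ray: for all $s$ large enough that $\phi^s\in\cH$,
\begin{equation*}
  M_B(\phi^s)\ge\delta J(\phi^s)-C.
\end{equation*}
Dividing by $s$ and letting $s\to+\infty$ washes out the constant $C$, and Theorem~\ref{T201}---applicable to $J$ with no positivity assumption, and to $M_B$ precisely because $(X,B)$ is subklt and $\Phi$ is semipositive---converts both sides into the corresponding non-Archimedean functionals, yielding $M_B^\NA(\phi)\ge\delta J^\NA(\phi)$. Finally, since $\cX_0-\cX_{0,\red}=\sum_E(b_E-1)E$ is effective and $\cL$ restricts to an ample class on each component $E$ of the central fiber, the identity $\DF_B(\cX,\cL)=M_B^\NA(\phi)+V^{-1}((\cX_0-\cX_{0,\red})\cdot\bar\cL^n)$ gives $\DF_B(\cX,\cL)\ge M_B^\NA(\phi)$. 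Chaining the inequalities produces $\DF_B(\cX,\cL)\ge M_B^\NA(\phi)\ge\delta J^\NA(\phi)$, and since $\phi\in\cH^\NA(L)$ was arbitrary this is exactly condition~(i) (equivalently~(ii)) of uniform K-stability for $((X,B);L)$.

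The substantive content is carried entirely by Theorem~\ref{T201}; the only genuinely new point to verify is the construction of the ray in the first step. There the one thing to be careful about is ensuring simultaneously that each $\phi^s$ is \emph{strictly} positive, so that the coercivity estimate (hypothesized only for positive smooth metrics) may be invoked, and that the total-space metric $\Phi$ is \emph{semipositive}, so that the entropy and Mabuchi asymptotics of Theorem~\ref{T201} apply. Relative ampleness of $\cL$ delivers both at once, so I do not anticipate a serious obstacle; the remaining manipulations are the routine passage to the limit and the nonnegativity of a vertical intersection number.
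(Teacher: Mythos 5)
Your argument is correct and is exactly the intended deduction of Corollary~\ref{C201} from Theorem~\ref{T201}, mirroring how Corollary~B follows from Theorem~A: run the coercivity inequality along the ray of positive metrics cut out by a fiberwise positive, totally semipositive $S^1$-invariant metric on the normal ample representative $(\cX,\cL)$, pass to slopes using Theorem~\ref{T201} for $J$ and for $M_B$ (the latter legitimately, since $(X,B)$ is subklt and $\Phi$ is semipositive), and then add the effective vertical correction term relating $\DF_B$ to $M_B^{\NA}$. The only cosmetic remark concerns your construction of $\Phi$: the relative Fubini--Study metric obtained by restricting a K\"ahler form from $\P^N\times\C$ (after twisting by $|\tau|^2$) is already semipositive --- indeed strictly positive --- on the total space, so the auxiliary term $A\,\pi^*(|\tau|^2)$ and the compactness argument, which is delicate precisely at the points of $\cX_0$ where $d\pi$ vanishes and the pulled-back form degenerates, are not needed.
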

% 
%
%%%%%%%%%%%%%%%%%%%%%%%%%%%%%%%%%%%%%%%%%%%%%%%%%%%%%%%%%%%%%%%%%%%
%
%
%\newpage
\section{Uniform K-stability and CM-stability}\label{sec:CM}
From now on, $X$ is smooth.
In this section we explore the relationship between uniform
K-stability and (asymptotic) CM-stability.
In particular we prove Theorem~C, Corollary~D and Corollary~E.
%
%%%%%%%%%%%%%%%%%%%%%%%%%%%%%%%%%%%%%%%%%%%%%%%%%%%%%%%%%%%%%%%%%%%
%
\subsection{Functions with log norm singularities}
In this section, $G$ denotes a reductive complex algebraic group. 
\begin{defi}\label{defi:norms} 
  We say that a function $f:G\to\R$ has \emph{log norm singularities} if there exist finitely many rational numbers $a_i$, finite dimensional complex vector spaces $V_i$ endowed with an algebraic $G$-action and non-zero vectors $v_i\in V_i$ such that
  $$
  f(g)=\sum_i a_i\log\|g\cdot v_i\|+O(1)
  $$
  for some choice of norms on the $V_i$'s. 
\end{defi}

\begin{rmk} By the equivalence of norms on a finite dimensional vector
  space, the description of $f$ is independent of the choice of norms
  on the $V_i$. In particular, given a maximal compact subgroup $K$ of $G$, the norms may be assumed to be $K$-invariant, so that $f$ descends to a function on the Riemannian symmetric space $G/K$.  
\end{rmk}

\begin{rmk}\label{rmk:lognorm}  
  Taking appropriate tensor products, is is easy to see that every
  function $f$ on $G$ with log norm singularities may be written as
  \begin{equation}\label{equ:flog}
    f(g)=a\left(\log\|g\cdot v\|-\log\|g\cdot w\|\right)+O(1),
  \end{equation}
  where $a\in\Q_{>0}$ and $v$, $w$ 
  are vectors in a normed vector space $V$ endowed with a $G$-action.
\end{rmk}
An algebraic group homomorphism $\la:\C^*\to G$ is called a \emph{one-parameter subgroup} (\emph{$1$-PS} for short). \corr{The following generalization of the Kempf-Ness/Hilbert-Mumford
criterion is closely related to~\cite{Paul13}. Our argument, which is based on Mumford's original proof of the Hilbert-Mumford criterion \cite[\S 2.1]{GIT}, fixes in particular the proof of \cite[Theorem 4.2]{Paul13}, as well as an incorrect argument provided in a previous version of the present paper. }

\begin{thm}\label{thm:KN} Let $f$ be a function on $G$ with log norm singularities. 
\begin{itemize}
\item[(i)] For each $1$-PS $\la\colon\C^*\to G$, there exists $f^{\NA}(\la)\in\Q$ such that 
$$
(f\circ\la)(\tau)=f^{\NA}(\la)\log|\tau|^{-1}+O(1)
$$
for $|\tau|\le 1$. 
\corr{\item[(ii)] The function $f$ is bounded below on $G$ iff $f^{\NA}(\la)\ge 0$ for all $1$-PS $\la$.}
\end{itemize}
\end{thm}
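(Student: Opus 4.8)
The plan is to first normalize using Remark~\ref{rmk:lognorm}: replacing $f$ by a function with the same asymptotics, I may assume
\[
f(g)=a\bigl(\log\|g\cdot v\|-\log\|g\cdot w\|\bigr)+O(1),
\]
with $a\in\Q_{>0}$ and $v,w$ nonzero vectors in a $G$-representation $V$. The error term $O(1)$ affects neither boundedness of $f$ from below nor the coefficient extracted in~(i), so I discard it, and up to equivalence of norms I take the norm on $V$ to be Hermitian. For part~(i), fix a $1$-PS $\la$ and decompose $V=\bigoplus_m V_m$ into weight spaces, so $\la(\tau)$ acts on $V_m$ by $\tau^m$; choosing the Hermitian norm so that this decomposition is orthogonal gives $\|\la(\tau)\cdot u\|^2=\sum_m|\tau|^{2m}\|u_m\|^2$. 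Writing $\nu_\la(u):=\min\{m:u_m\neq0\}$ for the minimal weight occurring in $u$, the smallest exponent dominates as $|\tau|\to0$, so $\log\|\la(\tau)\cdot u\|=\nu_\la(u)\log|\tau|+O(1)$. Hence $f(\la(\tau))=a\bigl(\nu_\la(w)-\nu_\la(v)\bigr)\log|\tau|^{-1}+O(1)$, and I set $f^{\NA}(\la):=a\bigl(\nu_\la(w)-\nu_\la(v)\bigr)$, which lies in $\Q$ since $a\in\Q$ and the weights are integers.

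Part~(ii) splits into two implications. The forward one is immediate: if $f$ is bounded below then so is $\tau\mapsto f(\la(\tau))=f^{\NA}(\la)\log|\tau|^{-1}+O(1)$ as $|\tau|\to0$, and since $\log|\tau|^{-1}\to+\infty$ this forces $f^{\NA}(\la)\ge0$. The converse is the substantial direction, and where I expect the main difficulty to lie. Assume $f^{\NA}(\la)\ge0$, i.e. $\nu_\la(v)\le\nu_\la(w)$, for every $1$-PS $\la$, and suppose toward a contradiction that $f$ is not bounded below; as $a>0$ this is equivalent to $\|g\cdot w\|/\|g\cdot v\|$ being unbounded over $g\in G$.

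My plan is to extract a destabilizing $1$-PS by an orbit-closure argument. Picking $g_j\in G$ with $\|g_jv\|/\|g_jw\|\to0$ and rescaling each point $(g_jv,g_jw)\in V\oplus V$ by the positive scalar $\|g_jw\|^{-1}$, the second coordinate stays on the unit sphere while the first tends to $0$, so after passing to a subsequence I obtain a limit $(0,w^\ast)$ with $w^\ast\neq0$. Letting $\hat G:=G\times\C^*$ act on $V\oplus V$ diagonally with $\C^*$ scaling, each rescaled point lies in $\hat G\cdot(v,w)$, so $(0,w^\ast)\in\overline{\hat G\cdot(v,w)}\cap Y$ with $Y:=\{0\}\times V$ a closed $\hat G$-invariant subset. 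By the Hilbert--Mumford criterion in the form of Birkes and Kempf, there is then a $1$-PS $\hat\la=(\la,\,\tau\mapsto\tau^b)$ of $\hat G$ with $\lim_{\tau\to0}\hat\la(\tau)\cdot(v,w)\in Y$. The condition $\lim_{\tau\to0}\tau^b\la(\tau)v=0$ gives $\nu_\la(v)+b>0$, while existence of $\lim_{\tau\to0}\tau^b\la(\tau)w$ gives $\nu_\la(w)+b\ge0$; subtracting yields $\nu_\la(v)>\nu_\la(w)$, hence $f^{\NA}(\la)<0$, contradicting the hypothesis. The delicate point is exactly this passage from analytic unboundedness to a single algebraic $1$-PS via Hilbert--Mumford; note that enlarging $G$ to $\hat G$ is what allows the normalization by the real scalar $\|g_jw\|^{-1}$ to happen inside the acting group, and the cancellation of the auxiliary weight $b$ is what keeps the resulting criterion intrinsic to $G$. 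Should one prefer to keep the argument self-contained, as in~\cite{Tho}, I would instead use the Cartan decomposition $G=K\overline{A_+}K$ together with $K$-invariance of the norm to reduce to a maximal torus, where the hypothesis reads $\Conv(\supp w)\subseteq\Conv(\supp v)$ and boundedness below follows from a weighted arithmetic--geometric-mean estimate; there the main obstacle shifts to proving uniformity of the resulting constant as one ranges over the compact factor $K$.
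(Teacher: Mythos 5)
Part~(i) of your argument and the forward implication in~(ii) are correct and essentially identical to the paper's proof: reduce to $f(g)=\log\|g\cdot v\|-\log\|g\cdot w\|$ via Remark~\ref{rmk:lognorm}, decompose into weight spaces, and read off $f^{\NA}(\la)$ from the minimal weights. The problem is the converse direction of~(ii), where your primary argument has a genuine gap.

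The difficulty is that your application of the Birkes--Kempf form of the Hilbert--Mumford criterion to $\hat G=G\times\C^*$ and $Y=\{0\}\times V$ is vacuous. Because the $\C^*$ factor scales both coordinates, the point $(0,0)$ lies in $\overline{\hat G\cdot(v,w)}$ for \emph{every} pair $(v,w)$ (let the scaling parameter tend to $0$), so $\overline{\hat G\cdot(v,w)}\cap Y\neq\emptyset$ always holds and the theorem always returns a $1$-PS --- for instance the pure scaling $\hat\la=(\mathrm{triv},\tau\mapsto\tau)$ --- carrying no information about boundedness of $f$. Your subsequent bookkeeping reflects this: the conditions you extract are $\nu_\la(v)+b>0$ and $\nu_\la(w)+b\ge 0$, i.e.\ $\nu_\la(v)>-b$ and $\nu_\la(w)\ge -b$, and these do \emph{not} combine to give $\nu_\la(v)>\nu_\la(w)$. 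That conclusion would require $\nu_\la(w)+b=0$ exactly, i.e.\ that the second component of the limit be nonzero; but Kempf's theorem only guarantees that the limit lies somewhere in the closed invariant set $Y$, which contains $(0,0)$, and one cannot force the limit to hit the non-invariant, non-closed locus $\{0\}\times(V\setminus\{0\})$. This is precisely the subtlety that makes the ``pair'' version of the Hilbert--Mumford criterion (as in~\cite{Paul13}) substantially harder than the classical one, and why the paper instead gives an elementary argument: using the Cartan decomposition $G=KTK$ and a $K$-invariant norm, one proves the \emph{uniform} estimate $f(k'tk)=h_{k\cdot v}(\Log|t|)-h_{k\cdot w}(\Log|t|)+O(1)$ in terms of support functions of weight polytopes, deduces $h_{k\cdot v}\ge h_{k\cdot w}$ on all of $N_\R$ from the hypothesis applied to the conjugated $1$-PS $k^{-1}\la k$ (not merely to $1$-PS of a fixed torus), and concludes directly. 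This is the route you relegate to a closing remark; to have a complete proof you would need to carry it out, including both the uniformity of the $O(1)$ terms over $K$ (which follows from compactness of $K$) and the fact that the polytope comparison must be established for every $K$-translate of $v$ and $w$.
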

The chosen notation stems from the fact that $f^{\NA}$ induces a function on the (conical) Tits building of $G$, \ie the non-Archimedean analogue of $G/K$ (compare~\cite[\S 2.2]{GIT}). 

\medskip

By Remark~\ref{rmk:lognorm}  we may and do assume that $f$ is of the form 
  \begin{equation*}
    f(g):=\log\|g\cdot v\|-\log\|g\cdot w\|,
  \end{equation*}
  where $v$, $w$ are nonzero vectors in a finite dimensional normed vector space $V$
  equipped with a linear $G$-action. In that case, the following variant of the Kempf-Ness criterion, observed in \cite[Proposition 4]{Paul13}, translates Theorem~\ref{thm:KN} into an algebro-geometric statement. 
  
\begin{lem}\label{lem:KN} The function $f(g)=\log\|g\cdot v\|-\log\|g\cdot w\|$ is bounded below on $G$ if and only if the Zariski closure of the orbit of $[v,w]\in\P(V\oplus W)$ does not intersect the subspace $\P(\{0\}\oplus W)$. 
\end{lem}
\begin{proof} As with any algebraic group action, the orbit $G\cdot [v,w]$ is a complex algebraic subvariety of $\P(V\oplus W)$, \ie a locally closed subset in the Zariski topology. Its Zariski closure therefore coincides with its closure in the Euclidean topology, and the argument is then elementary. Indeed, assume $f(g_i)\to-\infty$ for some sequence $g_i\in G$, \ie $\|g_i\cdot v\|=o\left(\|g_i\cdot w\|\right)$. After passing to a subsequence, $\tilde w_i:=(g_i\cdot w)/\|g_i\cdot w\|$ converges (in the Euclidian topology) to a nonzero vector in $W$, while $\tilde v_i:=(g_i\cdot v)/\|g_i\cdot w\|$ tends to $0$; hence $g_i\cdot [v,w]\in G\cdot [v,w]$ converges to $[0,\tilde w]\in\P(\{0\}\oplus W)$. Conversely, if $g_i\cdot [v,w]\to [0,\tilde w]$ for some sequence $g_i\in G$ and nonzero $\tilde w\in W$, then $c_i(g_i\cdot v)\to 0$ in $V$ and $c_i(g_i\cdot w)\to\tilde w$ in $W$ with $c_i\in\C^*$, and hence $f(g_i)=\log\|c_i(g_i\cdot v)\|-\log\|c_i(g_i\cdot w)\|\to-\infty$. 
\end{proof}

\corr{The key ingredient in the proof of Theorem~\ref{thm:KN} is the following algebro-geometric result, which will be obtained as a consequence of the Iwahori decomposition theorem, very much as in \cite{GIT}. }

\corr{\begin{thm}\label{thm:spec} Let $G$ be a complex reductive group with a linear action on a finite dimensional complex vector space $U$. If the (Zariski) closure of the $G$-orbit of a point $x\in \P(U)$ meets a $G$-invariant Zariski closed subset $Z\subset\P(U)$, then some $z\in Z\cap\overline{G\cdot x}$ can be reached by a $1$-PS $\la$ of $G$, \ie $\lim_{\tau\to 0}\la(\tau)\cdot x=z$.
\end{thm}}

\begin{rmk} As explained in \cite[\S 5]{Don10}, it is however not true in general that \emph{any} $z\in Z\cap\overline{G\cdot x}$ can be reached by a $1$-PS $\la$, unless the stabilizer of $z$ in $G$ is reductive. 
\end{rmk}

Introduce the formal power series ring $R=\C\cro{t}$ and its fraction field $K:=\C\lau{t}$, and let $X$ be a complex algebraic variety. Viewed as a $\C$-scheme, $X$ is separated, and the set of $R$-points $X(R)$, \ie morphisms $\g:\Spec R\to X$ over $\spec\C$, thus injects into $X(K)$. Further, each $\g\in X(R)$ admits a reduction $\tilde\g\in X(\C)$. If $X$ is proper (\ie $X(\C)$ is compact), the valuative criterion yields $X(R)=X(K)$, which means that any 'meromorphic arc' $\g:\Spec K\to X$ uniquely extends across the closed point of $\Spec R$, whose image is $\tilde\g$. In case $X=\P(U)$ for a complex vector space $U$, this becomes very concrete: for each $\g\in X(K)=\P(U_K)$, there exists $u\in U_R$, unique up to multiplication by a unit of $R$, such that $\g=[u]$ and $\tilde u\ne 0$ in $U$, and we then have $\tilde\g=[\tilde u]$. 

The following valuative criterion was used without precise reference in Mumford's proof of the Hilbert-Mumford criterion \cite[p.54]{GIT}. We provide here some details (see \cite[\S 4]{Ant} for a closely related discussion). 

\begin{lem}\label{lem:val} Let $\phi:Y\to X$ be a morphism between complex algebraic varieties, and let $x\in X(\C)$ be a closed point. Then $x$ belongs to the (Zariski) closure of the image $\phi(Y)$ if and only if there exists $\g\in Y(K)$ with $\phi(\g)\in X(R)$ and $\widetilde{\phi(\g)}=x$. 
\end{lem}
\begin{proof} The condition is clearly sufficient. Assume conversely that $x$ is in the Zariski closure of $\phi(Y)$. Replacing $X$ with the closure of $\phi(Y)$, we may assume that $\phi$ is dominant. By Chevalley's theorem, $\phi(Y)$ is constructible, \ie a finite union of locally closed subsets; being dense in $X$, it thus contains a non-empty open subset $U\subset X$. Using for instance Noether normalization, it is easy to construct a closed point $p\in C$ on a smooth algebraic curve and a morphism $f:C\to X$ with $f(p)=x$ and $f^{-1}(U)$ non-empty \cite[Lemma 7.2.1]{Kem}. It follows that the restriction of the induced morphism $\spec\cO_{C,p}\to X$ to the generic point lifts to $Y$, and passing to the formal completion of $C$ at $p$ yields the result. 
\end{proof}

\begin{proof}[Proof of Theorem~\ref{thm:spec}] The action of $G$ on $X:=\P(U)$, being algebraic, induces an action of the group $G(K)$ on the set $X(K)$. Since $K$ is an extension of $\C$, the closed point $x\in X$ can be viewed as an element of $X(K)$, and our goal is to find a point $\la\in G(K)$ corresponding to a one-parameter subgroup of $G$ such that the reduction of $\la\cdot x\in X(K)$ belong to $Z$. 

\corr{Given any $1$-PS $\la\in G(K)$ and $\xi\in X(K)$, we first claim that the reduction of $\la\cdot\xi \in X(K)$ only depends on $\tilde\xi \in X(\C)$. Indeed, denote by $U=\bigoplus_{m\in\Z} U_m$ the weight decomposition with respect to $\la$. As mentioned above, there exists $u\in U_R$, unique up to a unit in $R$, such that $\xi=[u]$ and $\tu\ne 0$. The reduction of 
$$
\la\cdot \xi=\left[\sum_m t^m u_m\right]
$$
is equal to $[\widetilde{u_p}]$ with $p:=\min\{m\mid\widetilde{u_m}\ne 0\}$, and hence only depends on $\tilde\xi=[\tilde u]$. }

\blue{\begin{exam} This claim is incorrect, as illustrated by the following counterexample kindly communicated to us by Yan Li. Consider $\xi:=[1:0]$, $\xi':=[1:t]$ in $\P^1(K)$ and the $1$-PS $\la:=\mathrm{diag}(t^2,t^{-2})$. Then $\tilde\xi=\tilde\xi'=[1:0]\in\P^1(\C)$, but $\la\cdot\xi=[t^2:0]=[1:0]$, $\la\cdot\xi'=[t^2:t^{-1}]=[t^3:1]$, and hence $\widetilde{\la\cdot \xi}=[1:0]\ne [0:1]=\widetilde{\la\cdot \xi'}$. 
\end{exam}}

Now let $\phi:G\to X$ be the orbit morphism $\phi(g)=g\cdot x$. By assumption, $\phi(G)$ contains a closed point $z\in Z$ in its Zariski closure, and Lemma~\ref{lem:val} thus implies the existence of $\g\in G(K)$ such that the reduction of $\phi(\g)=\g\cdot x\in X(K)$ is equal to $z$. 

By Iwahori's theorem (cf.~\cite[p.52]{GIT}), we can find a decomposition $\g=\a\la\b$ in $G(K)$ with $\a,\b\in G(R)$ and $\la\in G(K)$ induced by a $1$-PS. By $G$-invariance of $Z$, the reduction of $(\la\b)\cdot x$ belongs to $Z$. After replacing $\la$ with $\tilde\b\la\tilde\b^{-1}$ and $\b$ with $\tilde\b^{-1}\b$, we may assume that $\tilde\b=e\in G(\C)$. As a result, $\tilde x=\widetilde{\b\cdot x}$, \corr{and the above claim implies that $\widetilde{\la\cdot x}=\widetilde{(\la\b)\cdot x}$ belongs to $Z$.}
\end{proof}

\begin{proof}[Proof of Theorem~\ref{thm:KN}]
  
    (i) Let $\la\colon\C^*\to G$ be a $1$-parameter subgroup, and denote by $V=\bigoplus_{m\in\Z} V_m$ the corresponding weight decompositon. For $\tau\in\C^*$, we then have 
  \begin{equation*}
    \la(\tau)\cdot v=\sum_m \tau^m v_m, 
  \end{equation*}
  and hence 
  \begin{equation*}
    \log\|\la(\tau)\cdot v\|=\max_{v_m\ne 0}\left(m\log|\tau|+\log\|v_m\|\right)+O(1)
    =-\left(\min_{v_m\ne 0}m\right)\log|\tau|^{-1}+O(1)
  \end{equation*}
  for $|\tau|\le 1$. This proves (i) with
  with $f^\NA(\lambda)=\min\{m\mid w_m\ne 0\}-\min\{m\mid v_m\ne 0\}$. 

  (ii) By (i), $f^\NA(\la)\ge 0$ for all $1$-PS $\la$ if and only if $f\circ\la$ is bounded below on $\C^*$ for all $\la$. 
 By Lemma~\ref{lem:KN}, $f\circ\la$ is bounded below on $\C^*$ iff $\lim_{\tau\to 0}\la(\tau)\cdot[v,w]$ does not belong to the $G$-invariant Zariski closed subset $Z:=\P(\{0\}\oplus W)$, while $f$ is bounded below on $G$ iff $Z\cap\overline{G\cdot [v,w]}=\emptyset$. \corr{The equivalence now follows from Theorem~\ref{thm:spec}.}
 \end{proof}
%
%%%%%%%%%%%%%%%%%%%%%%%%%%%%%%%%%%%%%%%%%%%%%%%%%%%%%%%%%%%%%%%%%%%
%
\subsection{\corr{Proof of Theorem~C and Corollaries~D and~E}}
Replacing $L$ with $mL$, we may assume for notational simplicity that $m=1$. 
Set $N:=h^0(L)$ and $G:=\SL(N,\C)$, so that each $\sigma\in G$ defines
a Fubini-Study type metric $\phi_\sigma$ on $L$. 
Note that $M-\d J$ is bounded below on $\cH_1\simeq\GL(N,\C)/\U(N)$ 
iff $M(\phi_\sigma)-\d J(\phi_\sigma)$ bounded below for $\sigma\in G$,
by translation invariance of $M$ and $J$. 

The key ingredient is the following result of S.~Paul~\cite{Paul12} (see also \cite{Kap})
\begin{thm}\label{thm:norms}
  The functionals $E$, $J$ and $M$ all have log norm singularities on $G$.  
\end{thm}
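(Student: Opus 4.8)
The plan is to reduce everything to the Deligne-pairing formulas of Lemma~\ref{lem:funcdel} and then to invoke Paul's identification of these pairings, along the family of Fubini--Study metrics $\phi_\sigma=\sigma^*\phi_{\FS}$, with the norms of two distinguished $G$-vectors: the $X$-\emph{resultant} (Chow form) $R_X$, a non-zero vector in a representation of $G=\SL(N,\C)$ encoding the Chow coordinate of the cycle $[X]\subset\P^{N-1}$, and the $X$-\emph{hyperdiscriminant} $\hDelta_X$, a non-zero vector in a second representation encoding the dual variety of a generic projection. Since each $\sigma$ acts through pullback, it suffices to show that each of $E$, $J$, $M$ differs from a fixed $\Q$-linear combination of $\log\|\sigma\cdot R_X\|$ and $\log\|\sigma\cdot\hDelta_X\|$ by a term that stays bounded as $\sigma$ ranges over $G$.

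First I would handle $E$. By Lemma~\ref{lem:funcdel} one has $(n+1)VE(\phi_\sigma)=\langle\phi_\sigma^{n+1}\rangle_X-\langle\phi_\re^{n+1}\rangle_X$, so the problem is to compute the Deligne self-pairing of the Fubini--Study metric. The classical fact here, going back to Zhang and to Phong--Sturm, is that the metric induced by $\phi_{\FS}$ on the Deligne line $\langle L^{n+1}\rangle_X$ agrees, up to a bounded factor, with the norm on the Chow line carrying $R_X$; equivariance of the whole construction under $G$ then gives $(n+1)VE(\phi_\sigma)=c\log\|\sigma\cdot R_X\|^2+O(1)$ for an explicit rational $c>0$. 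For $J$ I would use $VJ(\phi_\sigma)=\langle\phi_\sigma,\phi_\re^n\rangle_X-\langle\phi_\re^{n+1}\rangle_X-\tfrac1{n+1}\bigl(\langle\phi_\sigma^{n+1}\rangle_X-\langle\phi_\re^{n+1}\rangle_X\bigr)$: the self-pairing is again the resultant, and the mixed pairing $\langle\phi_\sigma,\phi_\re^n\rangle_X$, in which only one factor varies with $\sigma$, is governed by a resultant-type (Chow) vector as well, so $J$ too has log norm singularities, all expressible through $R_X$.

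The functional $M$ is the heart of the matter. By Lemma~\ref{lem:funcdel}, $VM(\phi_\sigma)=\langle\half\log\MA(\phi_\sigma),\phi_\sigma^n\rangle_X-\langle\half\log\MA(\phi_\re),\phi_\re^n\rangle_X+\tfrac{\bar S}{n+1}\bigl(\langle\phi_\sigma^{n+1}\rangle_X-\langle\phi_\re^{n+1}\rangle_X\bigr)$, so beyond the resultant contribution already understood, the only genuinely new object is the metric induced on $\langle K_X,L^n\rangle_X$ by the pair $\bigl(\half\log\MA(\phi_\sigma),\phi_\sigma\bigr)$. Paul's central theorem identifies this pairing, along the Fubini--Study family, with the hyperdiscriminant: up to a bounded error and a resultant contribution, it equals $c'\log\|\sigma\cdot\hDelta_X\|^2$. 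Feeding this back through the Chen--Tian formula $M=H+R+\bar S E$ and combining with the resultant formula for $E$ expresses $M(\phi_\sigma)$ as a $\Q$-combination of $\log\|\sigma\cdot\hDelta_X\|$ and $\log\|\sigma\cdot R_X\|$ modulo $O(1)$, which is exactly the asserted structure.

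The main obstacle is precisely the hyperdiscriminant identification for the $\langle K_X,L^n\rangle_X$ pairing. Whereas the Chow-form computation for $E$ is essentially a comparison of two metrics on a single line, the appearance of the dual variety reflects the curvature term $\Ric$ hidden inside $\half\log\MA(\phi)$, and extracting it requires Paul's analysis of the generic projection and the associated discriminant rather than any soft argument. A secondary but real difficulty is the bookkeeping: one must verify that all the smooth, bounded contributions---from the fixed reference metrics, from the compactness of $\U(N)$, and from the fiber-integral remainders in~\eqref{equ:metrind}---are genuinely absorbed into a single $O(1)$ term, uniformly as $\sigma$ degenerates toward the boundary of $G$. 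This uniform control is exactly what Paul's theorem supplies.
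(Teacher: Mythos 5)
Your treatment of $E$ and $M$ follows the paper's route: $E(\phi_\sigma)=\frac{1}{\deg R}\log\|\sigma\cdot R\|+O(1)$ is exactly the Zhang/Paul identification of the Chow form with the Deligne self-pairing, and for $M$ the paper likewise simply invokes Paul's Theorem~A, which gives $M(\phi_\sigma)=V^{-1}\log\|\sigma\cdot\D\|-V^{-1}\frac{\deg\D}{\deg R}\log\|\sigma\cdot R\|+O(1)$ with $\D$ the hyperdiscriminant. So far so good.

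The gap is in your treatment of $J$. You claim that the mixed pairing $\langle\phi_\sigma,\phi_\re^n\rangle_X$, i.e.\ the term $\int_X(\phi_\sigma-\phi_\re)\MA(\phi_\re)$, is ``governed by a resultant-type (Chow) vector as well'', so that $J$ is ``all expressible through $R_X$''. This cannot be right. If both terms in $J(\phi_\sigma)=\int_X(\phi_\sigma-\phi_\re)\MA(\phi_\re)-E(\phi_\sigma)$ were of the form $c\log\|\sigma\cdot R_X\|+O(1)$, then along every $1$-PS $\la$ the slope of the first term would be a fixed rational multiple of the slope of $E$. But the non-Archimedean slope of $\int_X(\phi_\sigma-\phi_\re)\MA(\phi_\re)$ is $V^{-1}(\phi_\la\cdot\phi_\triv^n)=\max_E b_E^{-1}\ord_E(D)$ (a sup-type quantity), whereas the slope of $E$ is $(\bar\cL^{n+1})/((n+1)V)$ (an average-type quantity); these are not proportional over all test configurations --- indeed $J^{\NA}$, their difference, is nonnegative and vanishes only for (translates of) the trivial configuration, which would be impossible if both slopes came from the single vector $R_X$. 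The correct statement, which is what the paper uses (quoting the proof of Lemma~3.2 of Tian's paper), is that $\int_X(\phi_\sigma-\phi_\re)\MA(\phi_\re)=\log\|\sigma\|+O(1)$ for any norm on the space of $N\times N$ matrices; in the language of Definition~\ref{defi:norms} the relevant representation is the standard action of $G$ on $\mathrm{End}(\C^N)$ with vector the identity matrix, not the Chow line. With that substitution your argument for $J$ goes through, since log norm singularities are by definition allowed to involve several distinct representations.
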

\corr{Granted this result we can deduce Theorem~C.}
The equivalence of~(ii) and~(iii) follows from the same argument as 
Proposition~8.2 in~\cite{BHJ1}, so it suffices to show that~(i) 
and~(iii) are equivalent.
By Theorem~\ref{thm:norms}, the function
$f(\sigma):=M(\phi_\sigma)-\d J(\phi_\sigma)$ on $G$ 
has log norm singularities.  
\corr{By Theorem~\ref{thm:KN}, it is thus bounded below iff
\begin{equation*}
  \lim_{s\to+\infty}\frac{(f\circ\la)(e^{-s})}{s}\ge 0
\end{equation*}
for each $1$-parameter subgroup $\la\colon\C^*\to G$.}
\corr{We obtain the desired result} since by Theorem~B, this limit is equal to 
$M^{\NA}(\phi_\la)-\d J^{\NA}(\phi_\la)$,
where $\phi_\la\in\cH^\NA$ is the non-Archimedean metric on $L$
defined by $\la$.

\corr{Corollary~D follows since every ample test configuration of $(X,L)$ 
is induced by a 1-PS, see~\S\ref{S109}. The first assertion of
Corollary~E follows immediately, and the fact that the reduced
automorphism group of $(X,L)$ is finite is a consequence 
of~\cite[Corollary 1.1]{Paul13}.}

\begin{proof}[Proof of Theorem~\ref{thm:norms}]
  Recall from~\cite{Paul12} that to the linearly normal 
  embedding $X\hookrightarrow\P H^0(X,L)^*\simeq\P^{N-1}$ 
  are associated the $X$-resultant $R$, \ie the Chow coordinate 
  of $X$, and the $X$-hyperdiscriminant $\D$, which cuts out the dual variety of 
  $$
  X\times\P^{n-1}\hookrightarrow\P^{N-1}\times\P^{n-1}\hookrightarrow\P^{Nn-1}, 
  $$
  the second arrow being the Segre embedding. 

In our notation, we then have $\deg R=V(n+1)$ and $\deg\D=V\left(n(n+1)-\bar S\right)$~\cite[Proposition 5.7]{Paul12}, and~\cite[Theorem A]{Paul12} becomes
\begin{equation}\label{equ:MP}
M(\phi_\sigma)=V^{-1}\log\|\sigma\cdot\D\|-V^{-1}\frac{\deg\D}{\deg R}\log\|\sigma\cdot R\|+O(1), 
\end{equation}
which proves the assertion for $M(\phi_\sigma)$. 

We next consider 
$$
J(\phi_\sigma)=\int_X(\phi_\sigma-\phi_{\mathrm{ref}})\MA(\phi_{\mathrm{ref}})-E(\phi_\sigma).
$$
On the one hand, by~\cite[Theorem 1]{Paul04} (or~\cite[Theorem 1.6, Theorem 3.6]{Zha96}) we have 
\begin{equation}\label{equ:EP}
E(\phi_\sigma)=\frac{1}{\deg R}\log\|\sigma\cdot R\|+O(1). 
\end{equation}
On the other hand, choosing any norm on the space of complex $N\times N$-matrices (in which $G$ of course embeds), it is observed in the proof of~\cite[Lemma 3.2]{Tian14} that 
$$
\int_X\left(\phi_\sigma-\phi_{\mathrm{ref}}\right)\MA(\phi_{\mathrm{ref}})=\log\|\sigma\|+O(1). 
$$
The assertion for $J(\phi_\sigma)$ follows. 
\end{proof}
%
%%%%%%%%%%%%%%%%%%%%%%%%%%%%%%%%%%%%%%%%%%%%%%%%%%%%%%%%%%%%%%%%%%%
%
\subsection{Discussion of~\cite{Tian14}}
The statement of~\cite[Lemma 3.1]{Tian14} sounds overoptimistic from the GIT point of view, as it would mean that CM-stability can be tested by only considering $1$-parameter subgroups of a fixed maximal torus $T$. 

At least, the proof is incorrect, the problem being the estimate~(3.1), which claims that $\phi_{\tau k}-\phi_\tau$ is uniformly bounded with respect to $\tau\in T$ and $k\in K$. As the next example shows, this is not even true for a fixed $k\in K$. 

\begin{exam} 
  Assume $(s_1,s_2)$ is a basis of $H^0(X,L)$, let $k\in U(2)$ be the
  unitary transformation exchanging $s_1$ and $s_2$,
  $\tau=(t,t^{-1})$, and pick a point $x$ with $s_1(x)=0$. Then 
$$
\phi_{\tau k}(x)-\phi_\tau (x)=4\log|\tau|
$$
is unbounded. 
\end{exam}

In any case, the methods here do not seem to be able to deduce
CM-stability from K-stability, because of the following fact
(cf.~\cite[p.39]{Li12}).
\begin{prop}\label{prop:bounded} For each polarized manifold $(X,L)$ and each $m$ large and divisible enough, there exists a non-trivial 1-PS $\la$ in $\GL(N_m,\C)$ such that $J$ and $M$ remain bounded on the corresponding Fubini-Study ray $\phi^s:=\phi_{\la(e^{-s})}$. 
\end{prop}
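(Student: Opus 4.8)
The plan is to combine the existence of nontrivial almost trivial test configurations with Paul's log norm estimate, which is exactly what converts a vanishing slope into genuine boundedness. First I would invoke Proposition~\ref{prop:almost}: for $m$ divisible enough there is a nontrivial $1$-PS $\la\colon\C^*\to\GL(N_m,\C)$ whose associated ample test configuration $(\cX_\la,\cL_\la)$ is almost trivial, i.e.\ has trivial normalization. Writing $\phi^s:=\phi_{\la(e^{-s})}$ for the corresponding Fubini--Study ray, I would first reduce to the special linear group: decomposing $\la$ into its $\SL(N_m,\C)$-part and its determinant part changes $\phi^s$ only by an additive term affine in $s$, and since $J$ and $M$ are translation invariant, $J(\phi^s)$ and $M(\phi^s)$ are unaffected. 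Thus it suffices to work with $G=\SL(N_m,\C)$, exactly as in the proof of Theorem~C, and the ray stays nontrivial because $(\cX_\la,\cL_\la)$ is nontrivial.

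Next I would record the geometric vanishing. Let $\phi_\la\in\cH^\NA$ be the non-Archimedean metric on $L$ defined by $\la$. Since $J^\NA$ and $M^\NA$ are invariant under normalization, and since almost triviality means the normalization of $(\cX_\la,\cL_\la)$ is trivial, we have $\phi_\la=\phi_\triv+c$ for some $c\in\Q$. Consequently $J^\NA(\phi_\la)=0$, because $J^\NA$ vanishes precisely on $\phi_\triv+\Q$, and $M^\NA(\phi_\la)=M^\NA(\phi_\triv)=0$ by translation invariance of $M^\NA$.

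The crucial step is to upgrade these vanishing slopes to boundedness, and here I would feed in the deep input. By Theorem~\ref{thm:norms}, applied to $(X,mL)$, the functions $\sigma\mapsto J(\phi_\sigma)$ and $\sigma\mapsto M(\phi_\sigma)$ have log norm singularities on $G$. Theorem~\ref{thm:KN}(i) then applies and yields, identifying the slope $f^\NA(\la)$ with the non-Archimedean functional at $\phi_\la$ exactly as in the proof of Theorem~C (via Theorem~B),
\begin{equation*}
  J(\phi^s)=J^\NA(\phi_\la)\,s+O(1)
  \quad\text{and}\quad
  M(\phi^s)=M^\NA(\phi_\la)\,s+O(1)
\end{equation*}
as $s\to+\infty$. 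Inserting the vanishing from the previous paragraph gives $J(\phi^s)=O(1)$ and $M(\phi^s)=O(1)$, which is the assertion.

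The main obstacle is precisely this last step. Theorem~B on its own only gives $J(\phi^s)=o(s)$ and $M(\phi^s)=o(s)$ from the vanishing of the slopes, which is too weak to conclude boundedness. What makes the argument go through is that Theorem~\ref{thm:norms} supplies the finer log norm structure, so that along $\la$ the error terms are genuinely $O(1)$ rather than $o(s)$; together with the vanishing slope this forces $J$ and $M$ to remain bounded along the Fubini--Study ray.
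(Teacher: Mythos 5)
Your proposal is correct and follows essentially the same route as the paper: produce a nontrivial but almost trivial test configuration via Proposition~\ref{prop:almost}, observe that the associated non-Archimedean metric is $\phi_\triv+c$ so that $J^\NA$ and $M^\NA$ vanish on it, and then use Paul's log norm singularities (Theorem~\ref{thm:norms}) together with Theorem~\ref{thm:KN}(i) to upgrade the vanishing slope to genuine $O(1)$ boundedness along the ray. Your explicit reduction to $\SL(N_m,\C)$ and your remark that Theorem~B alone would only give $o(s)$ are details the paper leaves implicit, but the argument is the same.
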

\begin{proof} As originally observed in~\cite{LX} (cf.~Proposition~\ref{prop:almost}), $(X,L)$ admits a non-trivial ample test configuration $(\cX,\cL)$ that is almost trivial, \ie with trivial normalization. As recalled in \S\ref{S109}, for each $m$ large and divisible enough, $(\cX,\cL)$ can be realized as the test configuration induced by a 1-PS $\la:\C^*\to\GL(N_m,\C)$, which is non-trivial since $(\cX,\cL)$ is. Since the normalization of $(\cX,\cL)$ is trivial, the associated non-Archimedean metric is of the form $\phi_\triv+c$ for some $c\in\Q$, and hence $M^\NA(\phi_\la)=J^\NA(\phi_\la)=0$. Since $M$ and $J$ have log norm singularities on $\GL(N_m,\C)$ by Theorem~\ref{thm:norms}, $M$ and $J$ are indeed bounded on $\phi^s$ by Theorem~\ref{thm:KN}. 
\end{proof}
%
%
%%%%%%%%%%%%%%%%%%%%%%%%%%%%%%%%%%%%%%%%%%%%%%%%%%%%%%%%%%%%%%%%%%%
%
%
%\newpage
\section{Remarks on the Yau-Tian-Donaldson conjecture}\label{S106}
As explained in the introduction, we will here give a simple argument,
following ideas of Tian, 
for the existence of a K\"ahler-Einstein metric on a Fano manifold
$X$, assuming $(X,-K_X)$ is uniformly K-stable and the 
partial $C^0$-estimates due to Sz\'ekelyhidi. 
%
%%%%%%%%%%%%%%%%%%%%%%%%%%%%%%%%%%%%%%%%%%%%%%%%%%%%%%%%%%%%%%%%%%%
%
\subsection{Partial $C^0$-estimates and the continuity method}
For the moment, consider an arbitrary polarized manifold $(X,L)$. 
For each $m$ such that $mL$ is very ample, we have a `Bergman kernel
approximation' map 
$P_m\colon\cH\to\cH_m$, defined by setting $P_m(\phi)$ to be the Fubini-Study metric induced by the $L^2$-scalar product on $H^0(X,mL)$ defined by $m\phi$.
\begin{defi} 
  A subset $A\subset\cH$ satisfies \emph{partial $C^0$-estimates at level $m$} 
  if there exists $C>0$ such that $|P_m(\phi)-\phi|\le C$ for all $\phi\in A$. 
\end{defi}
Now assume $X$ is Fano, and set $L:=-K_X$.
Given a K\"ahler form $\a\in c_1(X)$, consider Aubin's continuity method
\begin{equation}\label{e107}
  \Ric(\om_t)=t\om_t+(1-t)\a.
\end{equation}
It is well-known that there exists a unique maximal solution
$(\om_t)_{t\in[0,T)}$, where $0<T\le1$.
The following important result, due to Sz\'ekelyhidi~\cite{Sze3}, confirms a conjecture of Tian.

\begin{thm}\label{thm:sze}
  The set $A:=\left\{\om_t\mid t\in[0,T)\right\}$ 
  satisfies partial $C^0$-estimates at level $m$, for arbitrarily large positive integers $m$. 
\end{thm}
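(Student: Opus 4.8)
The plan is to recast the estimate as a uniform lower bound on the Bergman kernel and then to obtain that bound by Gromov--Hausdorff compactness. Writing $\rho_m(\phi)$ for the density function of the Bergman kernel of $(mL,m\phi)$, one has $P_m(\phi)-\phi=\tfrac{1}{2m}\log\rho_m(\phi)$ up to an additive constant, and the upper bound $P_m(\phi)-\phi\le C$ follows automatically, by a sub-mean-value inequality, from the uniform lower Ricci and non-collapsing bounds below. Everything thus reduces to producing, for some fixed large $m$, a constant $c>0$ with $\inf_X\rho_m(\om_t)\ge c$ for all $t\in[0,T)$; equivalently, for each $x\in X$ and each $t$ a holomorphic section $s\in H^0(X,mL)$ with $|s(x)|^2_{m\phi_t}\ge c\,\|s\|^2_{L^2(m\phi_t)}$.

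First I would collect the geometric bounds furnished by the equation. Since $\Ric(\om_t)=t\om_t+(1-t)\a$ with $\a$ a fixed K\"ahler form and $t\in[0,1)$, the metrics $\om_t$ all lie in $c_1(X)$, have fixed volume, and satisfy a uniform lower Ricci bound, indeed $\Ric(\om_t)\ge0$. Together with the uniform diameter bound available along the continuity path, Bishop--Gromov and Cheeger--Colding then yield a uniform non-collapsing estimate and precompactness of $\{(X,\om_t)\}_{t\in[0,T)}$ in the Gromov--Hausdorff topology; any sequence $t_i$ thus produces a subsequential compact limit $(Z,d_Z)$.

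The second step identifies the complex-analytic structure of $Z$. By the Cheeger--Colding--Tian regularity theory, refined as in Donaldson--Sun, $Z$ is homeomorphic to a normal projective variety whose singular set $S$ has real codimension at least four, the regular locus $Z_{\reg}$ carries a limiting K\"ahler metric, and $mL$ descends to an ample line bundle on $Z$ with a continuous limiting Hermitian metric. The heart of the argument is then the partial $C^0$-estimate \emph{on} $Z$: fixing $z_0\in Z$ and using that its tangent cone is a metric cone, one writes down an approximately holomorphic Gaussian peak section of $mL$ concentrated near $z_0$, cuts it off away from $S$, and corrects it to a genuine holomorphic section via H\"ormander's $L^2$-estimate. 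The codimension-four bound on $S$ is precisely what permits the cutoff to be chosen with arbitrarily small $\bar\partial$-energy, so the correction is negligible in $L^2$ and the peak value survives, giving $\rho_m\ge c>0$ on $Z$ for all large $m$.

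Finally I would transfer the estimate back to $X$. Because the Hermitian line bundles, their holomorphic sections, and hence the Bergman kernels of $(X,\om_{t_i},mL)$ converge to those of $Z$, the bound $\rho_m\ge c$ on $Z$ forces $\rho_m(\om_{t_i})\ge c/2$ for $i$ large; as every Gromov--Hausdorff-convergent subsequence gives the same conclusion, a contradiction argument yields the estimate uniformly along the whole path $[0,T)$ at level $m$, for arbitrarily large $m$. \textbf{The main obstacle} is the construction of peak sections on the singular limit $Z$: it rests on the deep Cheeger--Colding structure theory (metric-cone tangent cones and the codimension-four bound on the singular set) together with the Donaldson--Sun H\"ormander technique, the essential new point here, due to Sz\'ekelyhidi, being that this machinery must be run using only the one-sided Ricci bound available along the continuity method, rather than a genuine K\"ahler--Einstein equation.
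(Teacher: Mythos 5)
The paper does not prove this statement at all: Theorem~\ref{thm:sze} is quoted as a black box from Sz\'ekelyhidi's work \cite{Sze3}, and the authors only \emph{use} it (together with Proposition~\ref{prop:transfer}) to deduce Theorem~\ref{thm:Kstable}. So there is no internal proof to compare against; what you have written is an outline of the strategy of the cited paper (itself descending from Donaldson--Sun and Tian's program), and as such it correctly identifies the architecture. But as a proof it has genuine gaps precisely at the points that constitute the content of \cite{Sze3}.

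First, you invoke the Cheeger--Colding--Tian/Donaldson--Sun structure theory to get a normal projective limit $Z$ with singular set of real codimension at least four. Those results are proved for K\"ahler--Einstein metrics, or more generally under two-sided Ricci bounds. Along Aubin's path one only has $\Ric(\om_t)=t\om_t+(1-t)\a\ge 0$; there is no uniform upper bound on $\Ric(\om_t)$ relative to $\om_t$, because the fixed form $\a$ is not controlled by $\om_t$. Extending the almost-rigidity, tangent-cone and codimension estimates to this twisted setting with only a one-sided bound is the main technical achievement of \cite{Sze3}, and your sketch assumes it. (A smaller issue of the same kind: the diameter bound from Myers degenerates as $t\to0$, so the family must be split into $t\le t_0$, handled by smooth compactness, and $t\ge t_0$.) Second, your final step is circular as written: you assert that $mL$, its Hermitian metrics, its holomorphic sections and hence the Bergman kernels converge to corresponding objects on $Z$, and then read off $\rho_m(\om_{t_i})\ge c/2$ from $\rho_m\ge c$ on $Z$. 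But the convergence of $H^0(X,mL)$ with its $L^2$-structure to a space of sections on $Z$ (indeed, the very projectivity of $Z$) is essentially equivalent to the partial $C^0$-estimate one is trying to prove. The actual argument must run the other way: one constructs approximate peak sections on $(X,\om_{t_i})$ for large finite $i$, by transplanting Gaussians from the tangent cone through the Gromov--Hausdorff approximation and correcting with H\"ormander's estimate on $X$ itself, and only afterwards concludes that the limit embeds in projective space. Until those two points are supplied, the proposal is a roadmap rather than a proof.
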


\corr{Given this result, we shall prove
\begin{thm}\label{thm:Kstable}
  Any uniformly K-stable Fano manifold admits a K\"ahler-Einstein metric. 
\end{thm}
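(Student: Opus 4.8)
The plan is to run Aubin's continuity method \eqref{e107} and show that uniform K-stability forces the solution to persist all the way to $t=1$, where $\Ric(\omega_1)=\omega_1$ is the desired K\"ahler--Einstein metric. Write $\phi_t\in\cH$ for the potential of $\omega_t$ on $L=-K_X$, so that $\bar S=n$. Solvability for small $t$ is classical: for $t<1$ the twist $(1-t)\alpha$ makes $\Ric(\omega_t)$ strictly larger than $t\omega_t$, so the relevant Laplace-type linearization of \eqref{e107} has no kernel and the implicit function theorem applies. Let $[0,T)$ be the maximal interval of existence, $0<T\le1$. By Yau's a priori estimates, a uniform bound on the oscillation of $\phi_t$ along $[0,T)$ yields uniform $C^\infty$ bounds, hence $C^\infty$-convergence $\omega_t\to\omega_T$; openness at $\omega_T$ then contradicts maximality unless $T=1$, and at $t=1$ one reads off the K\"ahler--Einstein equation. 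Since the oscillation is controlled by $J(\phi_t)$ (via the comparison \eqref{e106} and the explicit Monge--Amp\`ere form of \eqref{e107}), the entire theorem reduces to the energy bound $\sup_{t\in[0,T)}J(\phi_t)<+\infty$.

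To produce this bound I would first show that the Mabuchi functional is bounded above along the path, $M(\phi_t)\le M(\phi_0)+O(1)$. This is a standard feature of the continuity method: $\omega_t$ is the minimizer of the Mabuchi functional twisted by $(1-t)\alpha$, which for $t<1$ is bounded below thanks to the strictly positive twist, so its value at $\phi_t$ does not exceed its value at the reference $\phi_0$; and $M$ itself differs from this twisted functional by a multiple of a $J$-type term evaluated against the fixed form $\alpha$, which is uniformly controlled by the variational formulas of~\S\ref{S101}.

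Next I would transfer the estimate to Bergman metrics, where uniform K-stability is usable. By Sz\'ekelyhidi's partial $C^0$-estimate (Theorem~\ref{thm:sze}), fix $m$ large (and divisible enough for Corollary~D) with $\|\phi_t-P_m(\phi_t)\|_\infty\le C$ uniformly in $t$. The functionals $E$, $R$ and $J$ are $C^0$-continuous, being finite sums of integrals of $\phi-\phi_\re$ against fixed-mass Monge--Amp\`ere currents, so $|J(\phi_t)-J(P_m\phi_t)|\le C'$. Corollary~D supplies $\delta>0$ and a constant $C_m$ with $M\ge\delta J-C_m$ on $\cH_m$. Applying this inequality at $P_m(\phi_t)\in\cH_m$ and chaining with the transfer estimate for $M$ discussed below,
\begin{equation*}
  \delta\bigl(J(\phi_t)-C'\bigr)\le\delta J(P_m\phi_t)\le M(P_m\phi_t)+C_m\le M(\phi_t)+C''+C_m\le M(\phi_0)+O(1),
\end{equation*}
which bounds $J(\phi_t)$ uniformly in $t$ and closes the reduction.

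The hard part will be the transfer of the Mabuchi functional under the Bergman approximation, namely the bound $M(P_m\phi_t)\le M(\phi_t)+O(1)$. Its Monge--Amp\`ere energy and Ricci energy pieces are harmless by $C^0$-continuity, but the entropy term $H$ is not $C^0$-continuous, so $H(P_m\phi_t)$ cannot be compared with $H(\phi_t)$ from $\|\phi_t-P_m\phi_t\|_\infty\le C$ alone: the projection $P_m$ alters second derivatives and hence the density of $\MA(\cdot)$. To overcome this I would use that the partial $C^0$-estimate controls the Bergman density from above and below, combined with the explicit form $\MA(\phi_t)=e^{-2(t\phi_t+(1-t)\phi_0)}\big/\!\int_X e^{-2(t\phi_t+(1-t)\phi_0)}$ of the continuity equation, which pins down the entropy of $\phi_t$ in terms of quantities already under control and lets one compare it with the entropy of the Fubini--Study approximant $P_m\phi_t$. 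Once this entropy transfer is established, the remaining steps are bookkeeping with the energy functionals of~\S\ref{S101}, and Corollary~D provides precisely the coercivity on $\cH_m$ required to conclude.
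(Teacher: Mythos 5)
Your overall architecture coincides with the paper's: run Aubin's continuity method, bound $M(\phi_t)$ from above, use Sz\'ekelyhidi's partial $C^0$-estimate to transfer the coercivity $M\ge\d J-C_m$ from $\cH_m$ (Corollary~E/D) to the path, deduce a uniform bound on $J(\phi_t)$, and conclude via the oscillation bound. However, the step you yourself flag as ``the hard part'' --- the one-sided transfer $M(P_m\phi_t)\le M(\phi_t)+O(1)$ --- is left with a genuine gap, and the mechanism you propose for closing it does not work. The partial $C^0$-estimate controls only $\sup|\phi_t-P_m\phi_t|$; it gives no control whatsoever on the second derivatives of $P_m\phi_t$, hence none on the density of $\MA(P_m\phi_t)$, so it does not ``control the Bergman density from above and below.'' Likewise, knowing $\MA(\phi_t)$ explicitly from the continuity equation tells you nothing about the entropy of the Fubini--Study approximant, which is the quantity you would need to bound from above.

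The correct resolution, which is the content of Lemma~\ref{lem:shift}(ii) in the paper, avoids comparing the two entropies altogether. Write the Chen--Tian formula with \emph{base point} $\psi=P_m\phi_t$:
\begin{equation*}
  M(\phi_t)-M(\psi)=H_\psi(\phi_t)+\bar S\bigl(E(\phi_t)-E(\psi)\bigr)+E_{\Ric(dd^c\psi)}(\psi)-E_{\Ric(dd^c\psi)}(\phi_t).
\end{equation*}
The relative entropy $H_\psi(\phi_t)$ is nonnegative and may simply be discarded for the lower bound on $M(\phi_t)$; the energy difference is bounded by $\sup|\phi_t-\psi|$; and the twisted-energy difference is an integral of $\phi_t-\psi$ against mixed Monge--Amp\`ere measures wedged with $\Ric(dd^c\psi)$, which is controlled once one has a \emph{one-sided} Ricci bound for the Fubini--Study metric $\psi$. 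That bound is universal: $\Ric(dd^c\psi)\le N_m\,dd^c\psi$ for every $\psi\in\cH_m$ (Lemma~\ref{lem:ric}, via Griffiths curvature monotonicity for submanifolds of projective space). This yields $M(\phi_t)\ge M(P_m\phi_t)-C\sup|\phi_t-P_m\phi_t|$ with $C$ depending only on $m$, which is exactly the inequality your chain requires. A secondary remark: your argument that $M(\phi_t)\le M(\phi_0)+O(1)$ via ``$\omega_t$ minimizes the twisted Mabuchi functional'' is not justified as stated ($\omega_t$ is a priori only a critical point, and upgrading to minimizer invokes nontrivial convexity results); the paper instead proves the monotonicity $\tfrac{d}{dt}M(\phi_t)\le0$ directly by a Bochner--Kodaira computation using $\Ric(\om_t)\ge t\om_t$ (Lemma~\ref{lem:mono}), which is self-contained and should be preferred.
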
}
By working (much) harder, Datar and Sz\'ekelyhidi~\cite{DSz15} have in fact been able to deduce from Theorem~\ref{thm:sze} a much better result dealing with K-polystability and allowing a compact group action. 
%
%%%%%%%%%%%%%%%%%%%%%%%%%%%%%%%%%%%%%%%%%%%%%%%%%%%%%%%%%%%%%%%%%%%
%
\subsection{CM-stability and partial $C^0$-estimates}
We first present in some detail 
well-known ideas due to Tian~\cite[\S4.3]{Tian12}.
In this section, $(X,L)$ is an arbitrary polarized manifold.
\begin{prop}\label{prop:transfer} 
  Assume that $(X,mL)$ is CM-stable, and that $A\subset\cH$ 
  satisfies partial $C^0$-estimates at level $m$. 
  Then there exist $\d,C>0$ such that $M\ge\d J-C$ on $A$. 
\end{prop}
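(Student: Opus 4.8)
The plan is to exploit the two hypotheses by projecting into $\cH_m$ and then transferring the CM-stability inequality back along the uniform $C^0$-bound. Concretely, I would fix $\phi\in A$ and set $\psi:=P_m(\phi)\in\cH_m$. CM-stability supplies constants $\d,C_0>0$, independent of $\phi$, with $M(\psi)\ge\d J(\psi)-C_0$, while the partial $C^0$-estimate gives $\|\phi-\psi\|_\infty\le C$. The whole argument then reduces to showing that replacing $\psi$ by $\phi$ changes both sides of this inequality by at most an additive constant; granting this, $M(\phi)\ge\d J(\phi)-C'$ on $A$ follows immediately.

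First I would dispose of the ``energy'' functionals, for which $C^0$-closeness alone suffices. Using the cocycle form of the Monge--Amp\`ere energy,
\[
E(\phi)-E(\psi)=\frac1{n+1}\sum_{j=0}^nV^{-1}\int_X(\phi-\psi)(dd^c\phi)^j\wedge(dd^c\psi)^{n-j},
\]
each $V^{-1}(dd^c\phi)^j\wedge(dd^c\psi)^{n-j}$ is a probability measure (its mass is the cohomological number $(L^n)=V$, and $\phi,\psi$ are positive), so $|E(\phi)-E(\psi)|\le\|\phi-\psi\|_\infty\le C$. The linear term $\int_X(\phi-\psi)\MA(\phi_\re)$ satisfies the same bound, whence $|J(\phi)-J(\psi)|\le 2C$. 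The Ricci energy $R=-E_{n\Ric(dd^c\phi_\re)}$ is handled identically: its variation pairs $\phi-\psi$ against mixed Monge--Amp\`ere currents wedged with the \emph{fixed} form $\theta=n\Ric(dd^c\phi_\re)$, and since $|\theta|\le C_0\om_0$ for a reference K\"ahler form $\om_0$, the total variation of these signed measures is again cohomologically bounded, giving $|R(\phi)-R(\psi)|\le C''\|\phi-\psi\|_\infty$. Thus $J$, $E$ and $R$ all transfer with $O(1)$ error.

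By the Chen--Tian decomposition $M=H+R+\bar S E$, the only term left is the entropy. Combining the three controlled transfers with CM-stability at $\psi$ collapses the statement to a single comparison,
\[
H(\phi)\ge H(\psi)-C,\qquad\text{equivalently}\qquad H(P_m(\phi))\le H(\phi)+C,
\]
uniformly for $\phi\in A$. This is the main obstacle. Unlike $E$, $J$ and $R$, the entropy $H(\phi)=\half\int_X\log[\MA(\phi)/\MA(\phi_\re)]\,\MA(\phi)$ depends on the full Monge--Amp\`ere measure, which a bound on $\phi-\psi$ in $C^0$ does \emph{not} control, so this step cannot be purely formal and is where the real content lies.

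To carry it out I would use that the partial $C^0$-estimate is exactly a uniform two-sided bound on the Bergman density $\rho_m:=e^{2m(P_m(\phi)-\phi)}$, namely $e^{-2mC}\le\rho_m\le e^{2mC}$ over all of $A$. This makes the Bergman probability measure $\beta_m:=N_m^{-1}\rho_m\,\MA(\phi)$ comparable to $\MA(\phi)$ with density bounded above and below, so a direct computation bounds its relative entropy against $\MA(\phi_\re)$ by $H(\phi)$ up to an additive constant depending only on $m$ and $C$. The delicate heart of the matter---and the point at which Tian's argument does its real work---is then to bound $H(P_m(\phi))$, built from the Fubini--Study volume $\MA(P_m\phi)$, in terms of the entropy of $\beta_m$, using the definition of $P_m$ together with a Jensen/convexity estimate fed by the uniform density bound. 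Once this entropy comparison is established, the reduction of the previous paragraph yields $M(\phi)\ge\d J(\phi)-C'$ for all $\phi\in A$, completing the proof.
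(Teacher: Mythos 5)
Your reduction to the single inequality $H(\phi)\ge H(P_m(\phi))-C$ is correct as bookkeeping, but that inequality is precisely where your argument stops being a proof. The Bergman-density/Jensen sketch in your last paragraph is not carried out, and it is not clear it can be: the two-sided bound on $\rho_m=e^{2m(P_m(\phi)-\phi)}$ controls the measure $\beta_m=N_m^{-1}\rho_m\MA(\phi)$ against $\MA(\phi)$, but $\MA(P_m(\phi))$ is built from the \emph{second derivatives} of $P_m(\phi)$, which a $C^0$-bound on $P_m(\phi)-\phi$ does not control at all; the passage from the entropy of $\beta_m$ to $H(P_m(\phi))$ is therefore a substantial unproven claim, not a routine convexity estimate. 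As you note yourself, this is ``where the real content lies''---but that content is missing from the proposal.

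The paper's proof avoids any entropy comparison. Setting $\psi=P_m(\phi)$ and writing the Chen--Tian formula \emph{based at $\psi$} rather than at the fixed reference $\phi_\re$, one gets
\[
M(\phi)-M(\psi)=H_\psi(\phi)+\bar S\left(E(\phi)-E(\psi)\right)+E_{\Ric(dd^c\psi)}(\psi)-E_{\Ric(dd^c\psi)}(\phi),
\]
where $H_\psi(\phi)=\half\int_X\log\left[\MA(\phi)/\MA(\psi)\right]\MA(\phi)\ge 0$ is a relative entropy of probability measures and can simply be dropped when one only wants a \emph{lower} bound on $M(\phi)-M(\psi)$. The price is that the twisting form $\Ric(dd^c\psi)$ now varies with $\phi$; this is handled by the uniform upper bound $\Ric(dd^c\psi)\le N_m\,dd^c\psi$ valid for every $\psi\in\cH_m$ (Lemma~\ref{lem:ric}), which allows the relevant mixed measures to be written as differences of positive measures of cohomologically bounded mass, yielding $M(\phi)\ge M(\psi)-C'\sup|\phi-\psi|$ (Lemma~\ref{lem:shift}(ii)). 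Your treatment of $E$, $J$ and the fixed-twist Ricci energy is fine; the missing idea is to base the Chen--Tian decomposition at the Fubini--Study point so that the only term not controlled by the $C^0$-estimate enters with a favorable sign.
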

The proof, which is similar to the arguments in~\cite[\S5]{Sze3}. is based on two lemmas.
\begin{lem}\label{lem:shift} For any two metrics $\phi,\psi\in\cH$, we have
\begin{itemize}
\item[(i)] $|J(\phi)-J(\psi)|\le 2\sup(\phi-\psi)$; 

\item[(ii)] $M(\phi)\ge M(\psi)-C\sup|\phi-\psi|$ for some $C>0$ only depending on a one-sided bound (either upper or lower) for the Ricci curvature of the K\"ahler metric $dd^c\psi$. 
\end{itemize}
\end{lem}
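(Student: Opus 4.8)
The plan is to treat the two parts by different routes. For (i) I would exploit that $E$ is a primitive of the Monge-Amp\`ere operator and that $\MA$ of a positive metric is a probability measure, so that both $J$-ingredients are ``averages'' of $\phi-\psi$. For (ii) the idea is to recompute the difference $M(\phi)-M(\psi)$ using $\psi$ itself as the reference, which makes the entropy term manifestly nonnegative; the one-sided Ricci bound will be needed only to control the Ricci-energy term.

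For (i), consider the affine segment $\psi_t:=\psi+t(\phi-\psi)\in\cH$, $t\in[0,1]$, which stays positive since $dd^c\psi_t=(1-t)dd^c\psi+t\,dd^c\phi$. Using $\delta E(\phi)=\MA(\phi)$ and integrating along this path gives
\[
E(\phi)-E(\psi)=\int_0^1\!\!\int_X(\phi-\psi)\,\MA(\psi_t)\,dt .
\]
Since each $\MA(\psi_t)$ is a probability measure, the inner integral lies in $[\inf(\phi-\psi),\sup(\phi-\psi)]$, hence so does $E(\phi)-E(\psi)$; the same holds for $\int_X(\phi-\psi)\MA(\phi_\re)$. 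Subtracting, via $J(\phi)-J(\psi)=\int_X(\phi-\psi)\MA(\phi_\re)-(E(\phi)-E(\psi))$, yields the (sharper) oscillation bound $|J(\phi)-J(\psi)|\le\osc(\phi-\psi)\le 2\sup|\phi-\psi|$, which is~(i).

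For (ii), I would first note that $M$ is determined up to an additive constant by its intrinsic variation $\delta M(\phi)=(\bar S-S_\phi)\MA(\phi)$, so $M(\phi)-M(\psi)$ does not depend on the chosen reference data. By Remark~\ref{R201} I may therefore evaluate it through the Chen--Tian formula taken with respect to $\psi$ (reference metric $\psi$ on $L$, reference metric $\half\log\MA(\psi)$ on $K_X$, reference measure $\MA(\psi)$), the three summands being normalized to vanish at $\psi$:
\[
M(\phi)-M(\psi)=H_\psi(\phi)+R_\psi(\phi)+\bar S\,E_\psi(\phi).
\]
Here $H_\psi(\phi)=\half\int_X\log\!\big[\MA(\phi)/\MA(\psi)\big]\MA(\phi)\ge 0$ is the relative entropy of two probability measures, so it can simply be dropped; and since $E_\psi(\phi)$ is an average of integrals of $\phi-\psi$ against the probability measures $V^{-1}(dd^c\phi)^j\wedge(dd^c\psi)^{n-j}$, one has $|E_\psi(\phi)|\le\sup|\phi-\psi|$.

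The only real obstacle is the Ricci term
\[
R_\psi(\phi)=-\sum_{j=0}^{n-1}V^{-1}\int_X(\phi-\psi)\,(dd^c\phi)^j\wedge(dd^c\psi)^{n-1-j}\wedge\Ric(dd^c\psi),
\]
where the a priori signed form $\Ric(dd^c\psi)$ obstructs a direct probability-measure estimate. This is exactly where the one-sided curvature bound enters: whether we are given $\Ric(dd^c\psi)\le\Lambda\,dd^c\psi$ or $\Ric(dd^c\psi)\ge\Lambda\,dd^c\psi$, I would write $\Ric(dd^c\psi)=\Lambda\,dd^c\psi\mp P$ with $P\ge 0$ a positive $(1,1)$-form. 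The summand $\Lambda\,dd^c\psi$ contributes an average against a probability measure, while $P$ pairs with $(dd^c\phi)^j\wedge(dd^c\psi)^{n-1-j}$ as a positive measure whose total mass is the fixed cohomological number $\Lambda V-\tfrac{\bar S}{n}V\ge 0$ (controlled by $\Lambda$, $V=(L^n)$ and $(K_X\cdot L^{n-1})$). Hence $|R_\psi(\phi)|\le C_1\sup|\phi-\psi|$ with $C_1$ depending only on the given one-sided bound $\Lambda$ and on fixed data. Combining the three estimates gives $M(\phi)-M(\psi)\ge-(C_1+|\bar S|)\sup|\phi-\psi|$, which is~(ii). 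Thus the whole difficulty is concentrated in the decomposition of $\Ric(dd^c\psi)$ into positive forms of cohomologically fixed mass; everything else reduces to the fact that mixed Monge-Amp\`ere measures of positive metrics are probability measures.
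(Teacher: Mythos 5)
Your proof is correct and follows essentially the same route as the paper's: part (i) rests on the fact that $E(\phi)-E(\psi)$ is an average of $\phi-\psi$ against probability measures (your path-integration of $\delta E$ is just a repackaging of the cocycle formula the paper quotes), and part (ii) uses the Chen--Tian formula based at $\psi$, drops the nonnegative entropy term, and handles the Ricci term by writing $\Ric(dd^c\psi)$ as $\Lambda\,dd^c\psi$ minus a positive form of cohomologically fixed mass --- exactly the paper's decomposition. The only (harmless) refinements are your sharper oscillation bound in (i) and the explicit invocation of Remark~\ref{R201} to re-normalize the reference data at $\psi$.
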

\begin{proof} 
  Recall that 
  \begin{equation*}
    E(\phi)-E(\psi)
    =\frac{1}{n+1}\sum_{j=0}^nV^{-1}\int_X(\phi-\psi)(dd^c\phi)^j\wedge(dd^c\psi)^{n-j}.
  \end{equation*}
  As a consequence, $|E(\phi)-E(\psi)|\le\sup|\phi-\psi|$, and~(i) follows immediately. 
  
  For (ii), we basically argue as in the proof of~\cite[Lemma~3.1]{Tian14}. 
  By the Chen-Tian formula~\ref{equ:varM}, we have
  \begin{equation*}
    M(\phi)-M(\psi)
    =H_\psi(\phi)+\bar S\left(E(\phi)-E(\psi)\right)
    +E_{\Ric(dd^c\psi)}(\psi)-E_{\Ric(dd^c\psi)}(\phi). 
  \end{equation*}
  Here the entropy term $H_\psi(\phi)$ is non-negative, and we have 
  \begin{equation*}
    E_{\Ric(dd^c\psi)}(\phi)-E_{\Ric(dd^c\psi)}(\psi)
    =\sum_{j=0}^{n-1}V^{-1}\int_X(\phi-\psi)(dd^c\phi)^j\wedge(dd^c\psi)^{n-j-1}
    \wedge\Ric(dd^c\psi).
  \end{equation*}
  Assume $\Ric(dd^c\psi)\le C dd^c\psi$ for some constant $C>0$. 
  We may then write 
  \begin{multline*}
    (dd^c\phi)^j\wedge(dd^c\psi)^{n-j-1}\wedge\Ric(dd^c\psi)\\
    =C(dd^c\phi)^j\wedge(dd^c\psi)^{n-j}
    -(dd^c\phi)^j\wedge(dd^c\psi)^{n-j-1}\wedge(C'dd^c\psi-\Ric(dd^c\psi)), 
  \end{multline*}
  a difference of two positive measures of mass $CV$ and
  $CV+(L^{n-1}\cdot K_X)$, respectively, and the desired estimate follows. 
  
  The case where $\Ric(dd^c\psi)\ge-C' dd^c\psi$ is treated similarly 
  (and will anyway not be used in what follows). 
\end{proof}
We next recall a well-known upper bound for the Ricci curvature of restrictions of Fubini-Study metrics. 
\begin{lem}\label{lem:ric} 
  We have $\Ric(dd^c\phi)\le N_m dd^c\phi$ for all $\phi\in\cH_m$.  
\end{lem}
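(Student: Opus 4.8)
The plan is to reduce the statement to a curvature comparison between $X$ and the projective space into which $\phi$ realizes it. Write $N=N_m=h^0(X,mL)$ and recall that a metric $\phi\in\cH_m$ is, by definition, the one induced on $L$ by the Kodaira embedding $\iota\colon X\hookrightarrow\P^{N-1}$ attached to $mL$, so that $m\phi$ is the restriction of the Fubini--Study metric and $dd^c\phi$ is a fixed positive multiple of $\iota^*\omega_{\FS}$, where $\omega_{\FS}$ denotes the Fubini--Study form with $[\omega_{\FS}]=c_1(\cO(1))$. Since the Ricci form depends only on the volume form $(dd^c\phi)^n$, it is invariant under scaling the Kähler form by a positive constant, so that $\Ric(dd^c\phi)=\Ric(\iota^*\omega_{\FS})$ and it suffices to bound the right-hand side against $\iota^*\omega_{\FS}$ (the passage back to $dd^c\phi$ being the harmless rescaling built into the normalization of $\cH_m$; only an upper bound of the form $\Ric(dd^c\phi)\le C\,dd^c\phi$ is used downstream in Lemma~\ref{lem:shift}).

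First I would record the two inputs on the ambient side. The Fubini--Study metric on $\P^{N-1}$ is Kähler--Einstein: as $-K_{\P^{N-1}}=\cO(N)$, one has $\Ric(\omega_{\FS})=N\,\omega_{\FS}=N_m\,\omega_{\FS}$. Moreover $\omega_{\FS}$ has positive holomorphic bisectional curvature; concretely, in a local unitary frame its curvature tensor is $R_{i\bar j k\bar l}=g_{i\bar j}g_{k\bar l}+g_{i\bar l}g_{k\bar j}$, whose trace over any unitary frame recovers $\Ric=N_m g$. The heart of the matter is then the Gauss equation for the complex submanifold $\iota(X)\subset\P^{N-1}$ with its induced metric, which expresses the curvature tensor of $X$ as the restriction of the ambient one minus a Hermitian-positive contribution of the second fundamental form. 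Tracing over a unitary frame of $TX$ and using that this second-fundamental-form term is Hermitian-positive gives $\Ric(\iota^*\omega_{\FS})\le\sum_{k\in TX}R^{\P}_{\,\cdot\,\bar\cdot\,k\bar k}$ as Hermitian forms on $TX$; and because each ambient term $R^{\P}_{i\bar j k\bar k}$ is itself Hermitian-positive (positivity of the bisectional curvature), enlarging the sum from a unitary frame of $TX$ to a full unitary frame of $T\P^{N-1}|_X$ only increases it, yielding $\sum_{k\in TX}R^{\P}_{\,\cdot\,\bar\cdot\,k\bar k}\le\Ric(\omega_{\FS})\big|_{TX}=N_m\,\iota^*\omega_{\FS}$. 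Chaining the two inequalities gives $\Ric(\iota^*\omega_{\FS})\le N_m\,\iota^*\omega_{\FS}$, which is exactly the asserted bound on $dd^c\phi$.

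The only step needing care -- and the sole real obstacle -- is getting the signs in the Gauss equation right: one must check that the second fundamental form \emph{decreases} the holomorphic bisectional curvature (hence the Ricci curvature) of the complex submanifold, and that each \emph{omitted} ambient term, coming from a normal direction, is non-negative, so that passing from a frame of $TX$ to a frame of $T\P^{N-1}$ moves the inequality in the correct direction. Both are classical facts about Kähler submanifolds of $\P^{N-1}$, so no genuinely new estimate is required; the remaining work is the bookkeeping of the traces, which I would carry out in a unitary frame adapted to the splitting $T\P^{N-1}|_X=TX\oplus\nu$ with $\nu$ the normal bundle.
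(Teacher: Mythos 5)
Your proof is correct and follows essentially the same route as the paper: the paper invokes Griffiths's curvature monotonicity for the subbundle $T_X\subset T_{\P}|_X$ together with Griffiths positivity of $T_\P$ to pass from $\Tr_{T_X}$ to $\Tr_{T_\P}$, which is exactly your Gauss-equation-plus-positive-bisectional-curvature argument in different language. Your observations that $\Ric$ is scale-invariant and that only some uniform bound $\Ric(dd^c\phi)\le C\,dd^c\phi$ is needed downstream are also consistent with how the lemma is used in the paper.
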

\begin{proof} 
  Choose a basis of $H^0(X,mL)$, and let $\om$ be the
  corresponding Fubini-Study metric on $\P:=\P H^0(X,mL)^*$. 
  Its curvature tensor 
  $$
  \Theta(T_\P,\om)\in C^\infty(\P,\La^{1,1}T_{\P}^*\otimes\mathrm{End}(T_\P))
  $$ 
  is Griffiths positive and satisfies
  $$
  \Tr_{T_\P}\Theta(T_\P,\om)=\Ric(\om)=N_m\om.
  $$
  
  For each complex submanifold $Y\subset\P$, the curvature of its tangent bundle $T_Y$ with respect to $\om|_Y$ satisfies $\Theta(T_Y,\om|_Y)\le\Theta(T_\P,\om)|_{T_Y}$ as $(1,1)$-forms on $Y$ with values in the endomorphisms of $T_Y$, as a consequence of a well-known curvature monotonicity property going back to Griffiths. We thus have
  $$
  \Ric(\om|_Y)=\Tr_{T_Y}\Theta(T_Y,\om|_Y)\le\Tr_{T_Y}\Theta(T_\P,\om)|_{T_Y}. 
  $$
  Using now $\Theta(T_\P,\om)\ge 0$, we have on the other hand
  $$
  \Tr_{T_Y}\Theta(T_\P,\om)|_{T_Y}\le\Tr_{T_\P}\Theta(T_\P,\om)|Y=N_m\om|_Y, 
  $$
  and hence 
  $$
  \Ric(\om|_Y)\le N_m\om|_Y.
  $$
  Applying this to the images of $X\subset\P$ under projective transformations yields the desired result. 
\end{proof}
\begin{proof}[Proof of Proposition~\ref{prop:transfer}] 
  Since $(X,mL)$ is CM-stable, there exist $\d,C>0$ such that 
  \begin{equation}
    M(P_m(\phi))\ge\d J(P_m(\phi))-C
  \end{equation}
  for all $\phi\in\cH$. By assumption on $A$, we also have 
  $|P_m(\phi)-\phi|\le C$ for all $\phi\in A$,
  and by Lemma~\ref{lem:ric}, the Ricci curvature of $dd^cP_m(\phi)$ is uniformly bounded 
  above. Hence Lemma~\ref{lem:shift} shows, 
  as desired, that there exists $C'>0$ with $M(\phi)\ge\d J(\phi)-C'$ 
  for all $\phi\in A$. 
\end{proof} 
%
%%%%%%%%%%%%%%%%%%%%%%%%%%%%%%%%%%%%%%%%%%%%%%%%%%%%%%%%%%%%%%%%%%%
%
\subsection{Proof of Theorem~\ref{thm:Kstable}}
Assume now that $X$ is a Fano manifold and set $L:=-K_X$.
Consider the continuity method~\eqref{e107}.
Pick metrics $\psi$ and $\phi_t$ on $-K_X$ 
such that $\a=dd^c\psi$ and $\om_t=dd^c\phi_t$, respectively.
After adding a constant to $\phi_t$,~\eqref{e107} may be written 
\begin{equation}\label{equ:Aubin}
  (dd^c\phi_t)^n=e^{-2\left(t\phi_t+(1-t)\psi\right)}.
\end{equation}
We recall the proof of the following well-known monotonicity property. 
\begin{lem}\label{lem:mono} 
  The function $t\to M(\phi_t)$ is non-increasing.
\end{lem}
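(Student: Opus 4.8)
The plan is to differentiate $t\mapsto M(\phi_t)$ and show the derivative is nonpositive. First I would invoke the variational formula $\delta M(\phi)=(\bar S-S_\phi)\MA(\phi)$ from~\S\ref{S101}, which gives
\[
\frac{d}{dt}M(\phi_t)=V^{-1}\int_X\dot\phi_t\,(\bar S-S_{\phi_t})(dd^c\phi_t)^n .
\]
In the Fano normalization $L=-K_X$ one computes $\bar S=n$, and from the continuity equation~\eqref{e107} together with the scalar curvature identity~\eqref{equ:scal} the scalar curvature of $\omega_t:=dd^c\phi_t$ is $S_{\phi_t}=nt+n(1-t)u_t$, where $u_t:=(\alpha\wedge\omega_t^{n-1})/\omega_t^n$. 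Hence $\bar S-S_{\phi_t}=n(1-t)(1-u_t)$, and since $(1-u_t)\omega_t^n=dd^c(\phi_t-\psi)\wedge\omega_t^{n-1}$, the favorable factor $1-t\ge0$ is already exposed and the whole question reduces to the sign of $\int_X\dot\phi_t\,dd^c(\phi_t-\psi)\wedge\omega_t^{n-1}$.

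Next I would differentiate the Monge--Amp\`ere equation~\eqref{equ:Aubin}. Writing $f:=\dot\phi_t$ and $h:=\phi_t-\psi$, this yields $\Delta_{\omega_t}f=-2(h+tf)$, where $\Delta_{\omega_t}f:=n\,dd^cf\wedge\omega_t^{n-1}/\omega_t^n$ is the complex Laplacian of $\omega_t$. Integrating by parts and substituting $h=-\tfrac12\Delta_{\omega_t}f-tf$, and using $\int_X f\,dd^cf\wedge\omega_t^{n-1}=-\int_X df\wedge d^cf\wedge\omega_t^{n-1}$, the expression above becomes
\[
\int_X f\,dd^c(\phi_t-\psi)\wedge\omega_t^{n-1}
=-\frac{1}{2n}\int_X(\Delta_{\omega_t}f)^2\,\omega_t^n
+t\int_X df\wedge d^cf\wedge\omega_t^{n-1}.
\]
Combining with the previous reduction, I obtain
\[
\frac{d}{dt}M(\phi_t)=V^{-1}n(1-t)\Bigl(-\frac{1}{2n}\int_X(\Delta_{\omega_t}f)^2\,\omega_t^n
+t\int_X df\wedge d^cf\wedge\omega_t^{n-1}\Bigr).
\]

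It then remains to show that the bracket is nonpositive, i.e.\ that
\[
\int_X(\Delta_{\omega_t}f)^2\,\omega_t^n\ \ge\ 2t\int_X df\wedge d^cf\wedge\omega_t^{n-1}.
\]
This is precisely a first-eigenvalue (Lichnerowicz--Matsushima) estimate. Since $\alpha\ge0$ and $t<1$ (because $0<T\le1$), the continuity equation forces $\Ric(\omega_t)=t\omega_t+(1-t)\alpha\ge t\omega_t$, and the Bochner--Weitzenb\"ock formula on the K\"ahler manifold $(X,\omega_t)$ then bounds the smallest nonzero eigenvalue of $-\Delta_{\omega_t}$ from below by $2t$. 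Expanding $f$ in eigenfunctions (the constant mode drops out of both sides), the displayed inequality holds term by term from $\lambda_k\ge 2t$. With $1-t\ge0$ this gives $\frac{d}{dt}M(\phi_t)\le0$, hence the monotonicity.

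The main obstacle is this last step: getting the eigenvalue bound with the correct constant. One must carefully match the normalization of $dd^c$ (so that $\Delta_{\omega_t}$ is the trace of $dd^c$ against $\omega_t$) against the factor $2$ arising in~\eqref{equ:Aubin}, and apply the Bochner formula in the form relating $\int_X(\Delta_{\omega_t}f)^2\omega_t^n$ to the Ricci term $\int_X\Ric(\omega_t)(\nabla f,\nabla f)\,\omega_t^n$ plus a nonnegative Hessian contribution; the classical benchmark $\lambda_1\ge2$ for a genuine K\"ahler--Einstein metric $\Ric(\omega)=\omega$ is exactly the case $t=1$ of this bound. The remaining ingredients---the variational formula, the differentiation of~\eqref{equ:Aubin}, and the integration by parts---I expect to be routine.
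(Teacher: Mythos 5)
Your argument is essentially the paper's proof: both differentiate $M$ via $\delta M=(\bar S-S_\phi)\MA(\phi)$, use the continuity equation to extract the factor $1-t$, differentiate~\eqref{equ:Aubin} to eliminate $\psi-\phi_t$ in favour of $\dot\phi_t$, and conclude by the Lichnerowicz-type eigenvalue bound forced by $\Ric(\omega_t)\ge t\omega_t$ (which the paper phrases as the operator inequality $\tfrac1\pi\Delta''_t\ge t$ on $(0,1)$-forms via Bochner--Kodaira--Nakano, rather than as an eigenfunction expansion on functions --- the two are equivalent). One bookkeeping slip: since $\int_X df\wedge d^cf\wedge\omega_t^{n-1}=\tfrac1n\int_X f(-\Delta_{\omega_t}f)\,\omega_t^n$, the inequality your bracket actually requires is $\int_X(\Delta_{\omega_t}f)^2\,\omega_t^n\ge 2nt\int_X df\wedge d^cf\wedge\omega_t^{n-1}$, with a factor $n$ missing from your display; this is harmless, because it is exactly what your claimed bound $\lambda_1(-\Delta_{\omega_t})\ge 2t$ (the $t$-scaled form of the classical Matsushima benchmark at $t=1$) delivers.
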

\begin{proof} 
  We have 
  \begin{align*}
    -\frac{d}{dt}M(\phi_t)
    &=nV^{-1}\int_X\dot\phi_t\left(\Ric(\om_t)\wedge\om_t^{n-1}-\om_t^n\right)\\
    &=nV^{-1}(1-t)\int_X\dot\phi_t dd^c(\psi-\phi_t)\wedge(dd^c\phi_t)^{n-1}\\
    &=nV^{-1}(1-t)\int_X(\psi-\phi_t)dd^c\dot\phi_t\wedge(dd^c\phi_t)^{n-1}.
  \end{align*}
  Since $d^c$ is normalized so that $dd^c=\frac{i}{\pi}\partial\overline{\partial}$, we have 
 $$
 n\frac{dd^c\dot\phi_t\wedge\om_t^{n-1}}{\om_t^n}=\mathrm{tr}_{\om_t}dd^c\dot\phi_t=-\tfrac{1}{2\pi}\D''_t\dot\phi_t
 $$
 with $\D''_t$ denoting the $\bar\partial$-Laplacian with respect to $\om_t$. On the other hand, differentiating~\ref{equ:Aubin} yields
  \begin{equation*}
    ndd^c\dot\phi_t\wedge\om_t^{n-1}=2(\psi-\phi_t-t\dot\phi_t)\om_t^n, 
  \end{equation*}
 and hence 
 $$
 \psi -\phi_t =\left(t-\tfrac{1}{\pi}\Delta''_t\right) \dot\phi_t. 
 $$
  We get
  \begin{align*}
    -\frac{d}{dt}M(\phi_t)
    &=\frac{1-t}{2\pi}\int_X\left(\left(\tfrac{1}{\pi}\Delta''_t-t\right)\dot\phi_t\right)\left(\Delta''_t\dot\phi_t\right)\MA(\phi_t)\\
    &= \frac{1-t}{2\pi}\int_X\langle\left(\tfrac{1}{\pi}\Delta''_t-t\right)\bar\partial\dot\phi_t, \bar\partial\dot\phi_t\rangle_{\omega_t}\MA(\phi_t). 
  \end{align*}
  Since $\Ric(\om_t)\ge t\om_t$, the $\bar\partial$-Laplacian $\D''_t$
  satisfies $\tfrac{1}{\pi}\D''_t\ge t$ on $(0,1)$-forms, and the last
  integral is thus nonnegative. Indeed, this follows from the Bochner-Kodaira-Nakano identity applied to 
  $$
  C^\infty(X,\La^{0,1} T_X^*)\simeq C^\infty(X,\La^{n,1} T_X^*\otimes K_X^*)
  $$ 
  with the fiber metric $\p_t=-\half\log\omega_t^n$ on $K_X^*=-K_X$, with curvature $dd^c\p_t=\Ric(\om_t)$. 
\end{proof}
\corr{We may now complete the proof of Theorem~\ref{thm:Kstable}.
By Corollary~E, $(X,-mK_X)$ is CM-stable for all $m$ divisible enough. }
Theorem~\ref{thm:sze} and Proposition~\ref{prop:transfer} therefore
yield $\d,C>0$ such that $M(\phi_t)\ge\d J(\phi_t)-C$ along Aubin's
continuity method. Since $M(\phi_t)$ is bounded above by
Lemma~\ref{lem:mono}, it follows that $J(\phi_t)$ remains
bounded. By~\cite[Lemma 6.19]{TianBook}, the oscillation of $\phi_t$ is bounded, and well-known arguments allow us to conclude,
see~\cite[\S6.2]{TianBook}. 
%
%
%%%%%%%%%%%%%%%%%%%%%%%%%%%%%%%%%%%%%%%%%%%%%%%%%%%%%%%%%%%%%%%%%%%
%
%
%\newpage

\end{document}